\documentclass[11pt]{amsart}

\usepackage{amssymb,amsfonts,latexsym}
\usepackage[centertags]{amsmath}
\usepackage{amsfonts}
\usepackage{amssymb}
\usepackage{amsthm}
\usepackage[all]{xy}
\usepackage{cite}
\usepackage{graphicx,color}
\usepackage{todo}

\setlength{\textheight}{8.5truein}
\setlength{\textwidth}{6.5truein}
\setlength{\evensidemargin}{0truein}
\setlength{\oddsidemargin}{0truein}
\setlength{\topmargin}{0truein}

\newtheorem{cont}{cont}[section]
\newtheorem{theorem}[cont]{Theorem}
\newtheorem{proposition}[cont]{Proposition}
\newtheorem{lemma}[cont]{Lemma}
\newtheorem{corollary}[cont]{Corollary}
\newtheorem{definition}[cont]{Definition}

\newtheorem{construction}[cont]{Construction}

\newtheorem*{Enunciato*}{Enunciato}
\newtheorem{conj}[cont]{Conjecture}
\newtheorem{notation}[cont]{Notation}
\numberwithin{equation}{section}
\newtheorem{remark}[cont]{Remark}

\newtheorem*{not*}{Notation}

\newcommand{\cK}{{\mathcal K}}
\newcommand{\cO}{{\mathcal O}}
\newcommand{\shF}{\mathcal{F}}

\newcommand{\shH}{\mathcal{H}}
\newcommand{\cA}{\mathcal{A}}
\newcommand{\cB}{\mathcal{B}}
\newcommand{\cC}{\mathcal{C}}
\newcommand{\cD}{\mathcal{D}}

\newcommand{\cL}{\mathcal{L}}
\newcommand{\shM}{\mathcal{M}}
\newcommand{\shE}{\mathcal{E}}
\newcommand{\PP}{\mathbb{P}}
\newcommand{\ZZ}{\mathbb{Z}}

\newcommand{\odi}[1]{\mathcal{O}_{#1}}

\newcommand{\arr}{\longrightarrow}

\DeclareMathOperator{\Hl}{H} \DeclareMathOperator{\h}{h}
 
\DeclareMathOperator{\rk}{rk} \DeclareMathOperator{\Hom}{Hom}
 
\DeclareMathOperator{\de}{deg}

\DeclareMathOperator{\depth}{depth}

\DeclareMathOperator{\Proj}{Proj} \DeclareMathOperator{\di}{dim}
\DeclareMathOperator{\codim}{codim}
\DeclareMathOperator{\coker}{coker}
\DeclareMathOperator{\Ext}{Ext} \DeclareMathOperator{\pd}{pd}
 \DeclareMathOperator{\ext}{ext}
 \DeclareMathOperator{\im}{im}

\begin{document}
\title{The representation type of  determinantal varieties
 }

\author[Jan O.\ Kleppe, Rosa M.\ Mir\'o-Roig]{Jan O.\ kleppe, Rosa M.\
Mir\'o-Roig$^{*}$}
\address{Oslo and Akershus University College,
         Faculty of Technology, Art and Design,
         PB. 4, St. Olavs Plass, N-0130 Oslo,
         Norway}
\email{JanOddvar.Kleppe@hioa.no}
\address{Facultat de Matem\`atiques i Inform\`{a}tica,
Universitat de Barcelona, Gran Via de les Corts Catalanes
585, 08007 Barcelona, SPAIN } \email{miro@ub.edu}

\date{\today} \thanks{$^*$ Partially supported by MTM2013-45075-P}
\footnote{Mathematics Subject Classification 2010. Primary 16G60, 14M12;
  Secondary 13C40, 13D07 }

\begin{abstract} This work is entirely devoted to construct huge families of indecomposable arithmetically Cohen-Macaulay (resp. Ulrich) sheaves $\shE$ of arbitrary high rank  on a general standard (resp. linear) determinantal scheme $X\subset \PP^n$ of codimension $c\ge 1$, $n-c\ge 1$ and defined by the maximal minors of a $t\times (t+c-1)$
homogeneous matrix $\cA$. The sheaves $\shE$ are constructed as iterated extensions of sheaves of lower rank. As applications: (1) we prove that any general standard determinantal scheme $X\subset \PP^n$ is of wild representation type provided the degrees of the entries of the matrix $\cA$ satisfy some  weak numerical assumptions; and (2) we determine values of $t$, $n$ and $n-c$ for which a linear standard determinantal scheme $X\subset \PP^n$ is of wild representation type  with respect to  the much more restrictive category of its indecomposable Ulrich sheaves, i.e. $X$ is of Ulrich wild representation type. \end{abstract}

\maketitle

\tableofcontents

\section{Introduction}

\vskip 2mm
The goal of this paper is to construct huge families of indecomposable maximal Cohen-Macaulay $A$-modules $E$ where  $A$ is  the homogeneous coordinate ring of a standard determinantal scheme, i.e. $A$ is the quotient ring of codimension $c$ of a polynomial ring $K[x_0, \cdots ,x_n]$ by the homogeneous ideal generated  by the  maximal minors of a  homogeneous $t\times (t+c-1)$ matrix.
  Our interest in this topic is based on the geometric counterpart. Given a projective variety $X\subset \PP^n$ we would like to understand the complexity of $X$ in terms of the associated category of arithmetically Cohen-Macaulay sheaves that it supports and, in particular, we are interested in
the existence of  projective varieties supporting  families of  arbitrarily high rank and dimension of indecomposable arithmetically Cohen-Macaulay
 bundles,  i.e. vector bundles $\shE$ without intermediate cohomology (ACM bundles, for short). In the case of linear standard determinantal schemes (i.e. schemes defined by the maximal minors of a matrix with linear entries) the ACM bundles  $\shE$ that we will construct will share another important feature, namely, the associated maximal Cohen-Macaulay module $E=\Hl_*^0(\shE)$ will be maximally generated. In \cite{Ulr}, Ulrich proved that  the maximal number of generators $m(E)$ of a maximal Cohen-Macaulay module $E$  associated to an ACM sheaf $\shE$ on a projective ACM variety $X$ is bounded by $m(E)\le \deg(X)·\rk(\shE)$. ACM sheaves attaining this bound are called Ulrich sheaves.
   The existence of Ulrich sheaves on a projective variety  is a challenging problem which has lately received a lot of attention. The   interest on Ulrich sheaves  relies among other things
on the fact that a $d$-dimensional scheme $X\subset \PP^n$ supports an Ulrich sheaf (resp. bundle) if and only
if the cone of cohomology tables of coherent sheaves  (resp. bundles) on $X$ coincides with
the cone of cohomology tables of coherent sheaves (resp. bundles) on $\PP^d$ (cf. \cite{ES}).  From the algebraic point of view the existence of an Ulrich $R$-module sheds light over the structure of $R$. In fact, we have the following criterion of Gorensteiness: if a Cohen-Macaulay ring $R$ supports an Ulrich $R$-module $M$ then $R$ is Gorenstein if and only if $\Ext^{i}_R(M,R)=0$ for $1\le i \le \dim R$ (cf. \cite{Ulr}).

Since the seminal
result by Horrocks characterizing ACM bundles on $\PP^n$ as those that
completely split into a sum of line bundles (cf. \cite{Hor}), the study of the category of indecomposable ACM bundles   on a
projective variety X becomes a natural way to measure the
complexity of the underlying variety X. Mimicking an analogous trichotomy in Representation
Theory, it has been proposed a classification of ACM projective varieties  as finite, tame or wild
(see Definition \ref{FineTameWild}) according to the complexity of their  category of ACM
bundles. This trichotomy is exhaustive for the case of ACM curves: rational
curves are finite (see \cite{BGS}; Theorem C and \cite{EH}; pg. 348), elliptic curves are tame (see \cite{A}; Theorems 7 and 10) and curves of higher genus are wild (see \cite{DG}; Theorem 16); and one of the main achievements in this
field has been the classification of  varieties of finite representation type in a very short list (cf. \cite{BGS}; Theorem C and \cite{EH}; pg. 348). However, it remains open to find out the representation type of the remaining ones.

The first goal of this paper is to construct families of indecomposable ACM bundles of arbitrary high rank and dimension on standard determinantal schemes $X$ and
to conclude that $X$ is of wild representation type provided the degree matrix associated to $X$ satisfies some  weak numerical hypothesis.
As we pointed out before, among ACM vector bundles $\shE$ on a  variety $X$, it is interesting to spot a very important
subclass for which its associated module $\oplus _{t} \Hl^0(X, \shE(t))$ has the maximal number of generators,
which turns out to be $\deg(X){\cdot} \rk(\shE)$. It is therefore
a meaningful question to find out if a given projective variety X is of wild representation
type with respect to the much more restrictive category of its indecomposable Ulrich
bundles.
In the second part of this  paper, we are going to focus our attention on linear standard determinantal schemes $X\subset \PP^n$ associated to a $t\times (t+c-1)$ matrix with entries linear forms and we are going to determine values of $t$, $n$ and $c$ for which $X$ is of wild representation
type with respect to the much more restrictive category of its indecomposable Ulrich sheaves (i.e. $X$ has Ulrich wild representation type). As  classical examples of linear standard determinantal schemes we have rational normal scrolls, Segre varieties, etc.

\vskip 2mm Let us briefly explain how this paper is organized. In Section 2, we fix notation and we collect  the background and basic results on (linear) standard determinantal schemes and the associated complexes needed in the sequel. Section 3 contains one of the main results of this work (Theorem \ref{Bigthm}). In this theorem we prove that a general determinantal scheme $X\subset \PP^n$ of codimension $c\ge 1$ and dimension $n-c\ge 2$ is of wild representation type provided the degree  of the entries of its associated matrix verify some weak numerical assumptions. To achieve our result we construct huge families of indecomposable ACM sheaves on $X$ of arbitrarily high rank as iterated extensions of ACM sheaves of lower rank. The existence of  such extensions is ensured by a series of technical lemmas gathered in the beginning of the section.

In sections 4 and 5 we deal with standard determinantal schemes $X\subset \PP^n$ of codimension $c$ defined by the maximal minors of a $t\times (t+c-1)$ matrix with linear entries and we address the problem of determining the values of $c$, $n-c$ and $t$ for which    wild representation type is witnessed  by means of Ulrich bundles; i.e. values of $c$, $n-c$ and $t$ for which $X$ has Ulrich wild representation type.
The approach is analogous to the one developed in section 3; starting with rank 1 Ulrich sheaves instead of rank 1 ACM sheaves we are able to construct huge families of indecomposable Ulrich sheaves of high rank and to conclude that under some numerical assumptions on $c$, $n-c$ and $t$, $X$ is of Ulrich wild representation type (see  Theorem \ref{XdL21}, Remark  \ref{ext1(L2,L1)t=2,3}   and Theorems \ref{mainthm11} and \ref{mainthm2}).  We end the paper with a Conjecture raised by this paper and proved in many cases (cf.
Conjecture \ref{conjecture}).

We have become aware of the preprint \cite{FPL} where the
representation type of integral ACM varieties is considered and it is
  shown that all ACM projective integral varieties which are not cones  are  of  wild  CM  type,  except  for  a  few  cases  which  they
completely classify. Our works are independent and some of our proofs can be shortened (for instance Theorems \ref{Bigthm} and \ref{mainthm11}). We decided to keep at least a short sketch of all of them because for the case of standard determinantal varieties the results that we get are slightly stronger. Indeed, under some weak numerical assumptions, we prove that generic standard determinantal   (resp. linear standard determinantal) varieties are not only wild but also strictly wild. Moreover, our construction is explicit and it allows us to control the rank of the ACM (resp. Ulrich) sheaves that we construct (they usually have lower rank than the rank of the sheaves constructed in \cite{FPL}) and we can explicitly compute a formula $f(r)$ for the dimension of the families of simple ACM (resp. Ulrich) sheaves of rank $r$ that we build.

 {\bf Acknowledgement.} The first author would like to thank the
University of Barcelona for its hospitality during his visit to
Barcelona in March 2015 where one of our final main theorems (Theorem
3.14) was established. The authors thank the referee for his/her useful comments.

\vskip 2mm
\noindent \underline{Notation.} Throughout this paper $K$ will be an algebraically closed field, $R=K[x_0, x_1, \cdots ,x_n]$, $\mathfrak{m}=(x_0, \ldots,x_n)$ and $\PP^n=\Proj(R)$. Given a closed subscheme $X\subset \PP^n$, we denote by  ${\mathcal I}_X$ its ideal sheaf and $ I(X)=H^0_{*}(\PP^n, {\mathcal I}_X)$
its saturated homogeneous ideal unless $X=\emptyset $, in which case we let $I(X)=\mathfrak{m}$. If $X$ is equidimensional and
Cohen-Macaulay of codimension $c$, we set $\cK_X ={\mathcal
E}xt^c_{{\mathcal O}_{\PP^n}} ({\mathcal O}_X,{\mathcal
O}_{\PP^n})(-n-1)$ to be its canonical sheaf. Given  a coherent sheaf $\shE$ on $X$ we are going to denote the twisted sheaf $\shE\otimes\odi{X}(l)$ by $\shE(l)$. As usual, $\Hl^i(X,\shE)$ stands for the cohomology groups, $\h^i(X,\shE)$ for their dimension and $\Hl^i_*(X,\shE)=\oplus _{l \in \ZZ}\Hl^i(X,\shE(l))$. We also set $\ext^i(\shE,\shF):=\di_K\Ext^i(\shE,\shF)$.

For any graded Cohen-Macaulay quotient $A$ of $R$ of codimension $c$,
we let $I_A=\ker(R\twoheadrightarrow A)$ and  $K_A=\Ext^c_R (A,R)(-n-1)$ be its canonical module. When we
write $X=\Proj(A)$, we let $A=R/I(X)$ and $K_X=K_A$. If $M$ is a
finitely generated graded $A$-module, let $\depth_{J}{M}$ denote
the length of a maximal $M$-sequence in a homogeneous ideal $J$
and let $\depth {M} = \depth_{\mathfrak m}{ M}$.


\section{Background and preparatory results}
For convenience of the reader we include in this section the
background and basic results on  standard determinantal varieties as well as on ACM and Ulrich sheaves needed later on.
Let us start gathering together the results on
standard
determinantal schemes and the associated complexes needed  in the sequel and we refer
to \cite{b-v}, \cite{eise} and \cite{rm} for more details.
\begin{definition}
\rm If  $\cA$ is a homogeneous matrix, we denote by  $I(\cA)$ the
ideal of $R$ generated by the  maximal minors of $\cA$ and by $I_j(\cA)$
the ideal generated by the $j \times j$ minors of $\cA$. A codimension $c$
subscheme
$X\subset
\PP^n$ is called a
\emph{standard determinantal} scheme if $I(X)=I(\cA)$ for some
$t\times (t+c-1)$ homogeneous matrix $\cA$. In addition, we will say that  $X$  is  a \emph{linear standard  determinantal scheme} if all entries of $\cA$ are linear forms.
 \end{definition}

Denote by
$\varphi
:F\longrightarrow G$ the morphism of free graded $R$-modules of
rank $t+c-1$ and $t$, defined by the homogeneous matrix $\cA$ of $X$, and
 by ${\cC}_i(\varphi )$ the (generalized) Koszul complex:
$${\cC}_i(\varphi): \  0 \rightarrow \wedge^{i}F \otimes S_{0}(G)\rightarrow
\wedge^{i-1} F
\otimes S
_{1}( G)\rightarrow \ldots \rightarrow \wedge ^{0} F \otimes S_i (G) \rightarrow 0.$$

 Let ${\cC}_i(\varphi)^*$ be the $R$-dual of
${\cC}_i(\varphi)$. The dual map $\varphi^*$ induces graded
morphisms $$\mu_i:\wedge ^{t+i}F\otimes
\wedge^tG^*\rightarrow \wedge^{i}F.$$

They can be used to splice the complexes ${\cC}_{c-i-1}(\varphi)^*\otimes \wedge^{t+c-1}F\otimes \wedge^tG^*$ and
 ${\cC}_i(\varphi)$ to a complex ${\cD}_i(\varphi):$
 \begin{equation}\label{splice}
0 \rightarrow \wedge^{t+c-1}F \otimes S_{c-i-1}(G)^*\otimes
\wedge^tG^*\rightarrow \wedge^{t+c-2} F \otimes S _{c-i-2}(G)^*\otimes
\wedge ^tG^*\rightarrow \ldots \rightarrow \end{equation}
$$\wedge^{t+i}F \otimes
S_{0}(G)^*\otimes \wedge^tG^*\stackrel{\mu _{i}}{\longrightarrow} \wedge^{i} F \otimes S _{0}(G)
\rightarrow \wedge ^{i-1} F \otimes S_1(G)\rightarrow \ldots \rightarrow \wedge^0F\otimes
S_i(G)\rightarrow 0 .$$

The complex  ${\cD}_0(\varphi)$ is called the {\em Eagon-Northcott
complex} and the complex  ${\cD}_1(\varphi)$ is called the
{\em Buchsbaum-Rim complex}. Let us rename the complex  ${\cC}_c(\varphi)$ as  ${\cD}_c(\varphi)$. Denote by $I_m(\varphi)$ the
ideal generated by the $m\times m$ minors of the matrix $\cA$
representing $\varphi$. Letting $S_{-1}M:=\Hom(M,R/I(\cA))$ we have the following well known result:

\begin{proposition}\label{resol} Let $X \subset \PP^n$ be a standard determinantal subscheme
of codimension $c$ associated to a graded minimal (i.e.
$\im (\varphi)\subset \mathfrak{m}G$) morphism $\varphi: F\rightarrow G$ of free
$R$-modules of rank $t+c-1$ and $t$, respectively. Set $M= \coker(
\varphi )$. Then it holds:

(i) ${\cD}_i(\varphi)$ is acyclic for $-1\le i \le c$.

(ii) ${\cD}_0(\varphi)$ is a minimal free graded $R$-resolution of
$R/I(X)$  and  ${\cD}_i(\varphi)$ is a minimal free graded
$R$-resolution of length $c$ of $S_iM$, $-1\le i \le c$ .

(iii) $K_X \cong S_{c-1}M(\mu )$ for some $\mu \in \ZZ$. So, up to twist,
$\mathcal{D}_{c-1}(\varphi)$ is a minimal free graded $R$-module resolution of
$K_X$.

(iv) ${\cD}_i(\varphi)$ is a minimal free graded $R$-resolution of $S_iM$
for $c+1\le i$ whenever $\depth_{I_m(\varphi)}R\ge t+c-m$ for every $m$ such
that $t\ge m \ge max(1,t+c-i)$.
\end{proposition}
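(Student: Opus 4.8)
The plan is to derive all four statements from the Buchsbaum--Eisenbud acyclicity criterion (``What makes a complex exact'') together with the self-dual structure of the family $\{\cD_i(\varphi)\}$, so that the whole argument reduces to bookkeeping of ranks and grades of the ideals of minors of $\varphi$. First I would record the two numerical inputs that drive everything. Since $R$ is Cohen--Macaulay and $X$ has the expected codimension $c$, the ideal $I(X)=I_t(\varphi)$ of maximal minors has grade exactly $c$, i.e. $\depth_{I_t(\varphi)}R=c$. And because $\varphi$ is graded minimal, $\im(\varphi)\subset\mathfrak m G$, every entry of the matrices representing the differentials of $\cD_i(\varphi)$ lies in $\mathfrak m$; this will give minimality for free once acyclicity is established.

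For part (i) I would apply the acyclicity criterion spot by spot. The rank condition $\rk\phi_k+\rk\phi_{k+1}=\rk(\cD_i)_k$ is purely combinatorial and follows from the construction of $\cD_i(\varphi)$ by splicing $\cC_{c-i-1}(\varphi)^*\otimes\wedge^{t+c-1}F\otimes\wedge^tG^*$ to $\cC_i(\varphi)$ along $\mu_i$, using the standard identification $\wedge^{t+c-1}F\otimes\wedge^jF^*\cong\wedge^{t+c-1-j}F$. For the depth condition one checks that, via these identifications, the requirement $\depth I(\phi_k)\ge k$ at each homological degree $k\le c$ reduces to an ideal with radical $\sqrt{I_t(\varphi)}$, so it amounts to $\depth_{I_t(\varphi)}R\ge k$; since $\depth_{I_t(\varphi)}R=c$ and the spliced complex has length $c$, all these inequalities hold. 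This is exactly the content of the Eagon--Northcott theorem for $\cD_0(\varphi)$ and the Buchsbaum--Rim theorem for $\cD_1(\varphi)$, and the general range $-1\le i\le c$ is handled identically; I would cite \cite{b-v} and \cite{eise} for the explicit verification.

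Granting acyclicity, for part (ii) I would identify the resolved module by computing the cokernel at the right-hand end. The last map $\wedge^1F\otimes S_{i-1}(G)\to\wedge^0F\otimes S_i(G)=S_i(G)$ sends $f\otimes g\mapsto\varphi(f)\cdot g$, so its cokernel is $S_i(G)/\varphi(F){\cdot}S_{i-1}(G)=S_iM$ with $M=\coker\varphi$; for $i=0$ the cokernel is instead $R/I(X)$ (and at the bottom of the range one uses the convention $S_{-1}M=\Hom(M,A)$). Minimality is immediate from the minimal choice of $\varphi$ noted above, and the length is $c$ by the term count. Part (iii) I would obtain from the self-duality $\cD_i(\varphi)^*\cong\cD_{c-1-i}(\varphi)$ up to twist, which is built into the splicing: dualizing the resolution $\cD_0(\varphi)$ of $A=R/I(X)$ and reading cohomology at the top spot gives $\Ext^c_R(A,R)=H^c(\cD_0(\varphi)^*)$, which by the duality is the cokernel at the end of a twist of $\cD_{c-1}(\varphi)$, namely $S_{c-1}M$ up to twist. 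Combined with $K_X=\Ext^c_R(A,R)(-n-1)$ this yields $K_X\cong S_{c-1}M(\mu)$ for the resulting $\mu\in\ZZ$.

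Finally, part (iv) is the same acyclicity argument pushed past homological degree $c$. For $i>c$ the complex $\cD_i(\varphi)$ is strictly longer than $c$, so additional differentials, whose minors generate the ideals $I_m(\varphi)$ for $\max(1,t+c-i)\le m\le t$, enter the exactness test, and the criterion now demands $\depth I_m(\varphi)\ge$ the corresponding homological degree. Matching degrees shows this requirement is exactly the hypothesis $\depth_{I_m(\varphi)}R\ge t+c-m$, so the criterion applies verbatim. I expect the main obstacle throughout to be the bookkeeping at the splice $\mu_i$: one must pin down $\rk\mu_i$ and verify that the grade of the ideal controlling exactness there is large enough, which is precisely where the expected-codimension hypothesis (and, in part (iv), the depth hypotheses) are genuinely used; everything else is formal.
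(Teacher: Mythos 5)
Your proposal is correct and is essentially the paper's approach: the paper proves this proposition simply by citing \cite{b-v}, Theorem 2.20 and \cite{eise}, Theorem A2.10 and Corollaries A2.12--A2.13, and your sketch is a faithful outline of the standard Eagon--Northcott/Buchsbaum--Rim argument (Buchsbaum--Eisenbud acyclicity criterion plus the self-duality of the spliced complexes) contained in exactly those references. No gap to report.
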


\begin{proof}
See, for instance \cite{b-v}; Theorem 2.20  and \cite{eise}; Theorem A2.10 and Corollaries A2.12 and
A2.13.
\end{proof}

Let us also recall
 the following useful comparison of cohomology
groups. If $Z\subset X$ is a closed subset such that $U=X\setminus Z$ is a local complete intersection,  $L$ and $N$ are finitely generated $R/I(X)$-modules, $\widetilde{N}$ is locally free on
$U$ and
 $\depth_{I(Z)}L\ge r+1$, then the natural map
\begin{equation} \label{NM}
\Ext^{i}_{R/I(X)}(N,L)\longrightarrow
\Hl_{*}^{i}(U,{\mathcal H}om_{\odi{X}}(\widetilde{N},\widetilde{L}))\cong \oplus _{\nu \in \ZZ}\Ext^{i}_{\odi{X}}(\widetilde{N},\widetilde{L}(\nu ))
\end{equation}
is a degree-preserving isomorphism, (resp. an injection) for $i<r$ (resp. $i=r$)
cf. \cite{SGA2}; expos\'{e} VI. Recall that we interpret $I(Z)$ as $\mathfrak{m}$ if $Z=\emptyset $.

\vskip 2mm
We end this section  setting some preliminary notions  concerning  the definitions and basic results on  ACM sheaves as well as on Ulrich sheaves.

\begin{definition}\label{ACM} \rm
Let $X\subset \PP^n$ be a projective scheme and let $\shE$ be a coherent sheaf on $X$. We say that $\shE$  is  \emph{arithmetically Cohen Macaulay} (shortly, ACM) if it is locally Cohen-Macaulay (i.e., $\depth \shE_x=\di \odi{X,x}$ for every point $x\in X$) and has no intermediate cohomology, i.e.
$$
\Hl^i(X,\shE (t))=0 \quad\quad \text{    for all $t$ and $i=1, \ldots , \di X-1.$}
$$
\end{definition}
Notice that when $X$ is a non-singular variety, any coherent ACM sheaf on $X$ is locally free.  ACM sheaves are closely related to their algebraic counterpart, the maximal Cohen-Macaulay modules.

\begin{definition} \rm
A graded $A$-module $E$ is a \emph{maximal Cohen-Macaulay} module (MCM for short) if $\depth E=\di E=\di A$.
\end{definition}

In fact, on any ACM scheme $X\subseteq\PP^n$, there exists a bijection between ACM sheaves $\shE$ on $X$ and MCM $A$-modules $E$ given by
the functors $E\rightarrow \widetilde{E}$ and $\shE\rightarrow \Hl^0_*(X,\shE)$.

\begin{definition}\rm
Given a closed subscheme $X\subset \PP^n$ of dimension $d>0$, a torsion free sheaf $\shE$ on  $X$
  is said to be \emph{initialized} if
$$
\Hl^0(X,\shE(-1))=0 \ \ \text{ but } \  \Hl^0(X,\shE)\neq 0.
$$
 If $\shE$ is a locally Cohen-Macaulay sheaf then, there  exists a unique integer $k$ such that $\shE_{init}:=\shE(k)$ is
initialized.
\end{definition}

\begin{definition} \rm
Given a projective scheme $X\subset \PP^n$ and a coherent sheaf $\shE$ on $X$, we say that $\shE$ is an \emph{Ulrich sheaf} if  $\shE$ is an ACM sheaf and $\h^0(\shE_{init})=\deg(X)\rk(\shE)$.
\end{definition}

The following result justifies this definition:
\begin{theorem}
Let $X\subseteq\PP^n$ be an integral subscheme and let $\shE$  be an ACM sheaf on $X$. Then the minimal number of generators $m(\shE)$ of the $A$-module $\Hl^0_*(\shE)$ is bounded by
$$
m(\shE)\leq \deg (X)\rk(\shE).
$$
\end{theorem}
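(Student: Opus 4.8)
The plan is to bound the minimal number of generators of $\Hl^0_*(\shE)$ by reducing modulo a sufficiently generic linear system and invoking the associated graded structure at the level of the Artinian reduction. Concretely, since $X$ is integral and $\shE$ is ACM, the module $E=\Hl^0_*(\shE)$ is a maximal Cohen–Macaulay $A$-module of dimension $\di A = d+1$. First I would choose a regular sequence of $d+1$ general linear forms $\ell_0,\dots,\ell_d$ on $A$; because $E$ is MCM, this sequence is also $E$-regular, so $\bar{E}:=E/(\ell_0,\dots,\ell_d)E$ is an Artinian module whose minimal number of generators equals $m(\shE)=m(E)$ by the graded Nakayama lemma. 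The problem thus collapses to computing the length of $\bar E / \mathfrak{m}\bar E$, which is dominated by the total length of $\bar E$.

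The key numerical step is then to control $\di_K \bar E$. By $E$-regularity, reduction by each successive general linear form is exact, and the multiplicity (degree) of $E$ as an $A$-module satisfies $\ell(\bar E) = e(E) = \rk(\shE)\cdot e(A) = \rk(\shE)\cdot\deg(X)$, where the last equality uses that $e(A)=\deg(X)$ for the integral ACM scheme $X$ and that the generic rank of $E$ over $A$ is exactly $\rk(\shE)$. Since every generator of $\bar E$ contributes at least one to the length, we obtain
\begin{equation*}
m(\shE) = \di_K (\bar E/\mathfrak{m}\bar E) \le \ell(\bar E) = \deg(X)\,\rk(\shE),
\end{equation*}
which is the desired bound. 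The point is that the minimal generators of $\bar E$ sit in degrees determined by the initial terms, and each of them occupies at least one dimension of the finite-length module $\bar E$.

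I would present the argument in that order: (1) pass to $E=\Hl^0_*(\shE)$ and record that it is MCM of dimension $d+1$; (2) cut by a general $E$-regular linear sequence of length $d+1$ to reach an Artinian reduction $\bar E$, noting that the number of minimal generators is preserved; (3) identify $\ell(\bar E)$ with the multiplicity $e(E)=\rk(\shE)\deg(X)$; (4) conclude via the trivial inequality $m(\bar E)\le \ell(\bar E)$. The main obstacle I anticipate is step (3), namely justifying that a general linear sequence is simultaneously $A$-regular and $E$-regular and that the resulting length equals the multiplicity rather than merely bounding it; this rests on the integrality of $X$ (so that the generic rank is well defined) together with standard associativity of multiplicities for MCM modules over a Cohen–Macaulay graded ring. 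Everything else is a routine application of graded Nakayama and the exactness of regular reductions.
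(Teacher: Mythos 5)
The paper does not actually prove this statement: it is quoted as background (the classical bound of Ulrich, cf.\ \cite{Ulr} and Brennan--Herzog--Ulrich \cite{BHU}), so there is no in-paper argument to compare yours against. Judged on its own, your proof is the standard one and is essentially correct: pass to $E=\Hl^0_*(\shE)$, which is MCM over $A$ of dimension $d+1$ because saturation kills $\Hl^0_{\mathfrak m}$ and $\Hl^1_{\mathfrak m}$ while the ACM hypothesis kills $\Hl^i_{\mathfrak m}(E)=\Hl^{i-1}_*(X,\shE)$ for $2\le i\le d$; cut by $d+1$ general linear forms (a system of parameters since $K$ is infinite, hence an $E$-regular sequence by maximal Cohen--Macaulayness); use graded Nakayama to identify $m(E)$ with $m(\bar E)\le \ell(\bar E)$; and compute $\ell(\bar E)=e(\mathfrak m;E)=\rk(\shE)\deg(X)$ via the associativity formula, which needs exactly the integrality of $X$ to make the generic rank well defined.

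Two small points are worth tightening. First, you invoke ``associativity of multiplicities for MCM modules over a Cohen--Macaulay graded ring,'' but $A$ itself need not be Cohen--Macaulay here ($X$ is only assumed integral, not ACM); what you actually need is (a) the equality $\ell(E/(\ell_0,\dots,\ell_d)E)=e((\ell_0,\dots,\ell_d);E)$, which holds because the \emph{module} $E$ is Cohen--Macaulay, and (b) $e((\ell_0,\dots,\ell_d);E)=e(\mathfrak m;E)$, which holds because general linear forms generate a minimal reduction of $\mathfrak m$ over an infinite field. Neither step uses Cohen--Macaulayness of $A$, and the associativity formula $e(\mathfrak m;E)=\ell_{A_{(0)}}(E_{(0)})\,e(\mathfrak m;A)$ is valid for any Noetherian graded domain. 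Second, you should record that $\rk(\shE)$ in the statement is by definition the rank at the generic point, so that $\ell_{A_{(0)}}(E_{(0)})=\rk(\shE)$ is a tautology rather than an extra hypothesis. With those clarifications the argument is complete.
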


Therefore, since it is obvious that for an initialized sheaf $\shE$, $\h^0(\shE)\leq m(\shE)$, the minimal number of generators of Ulrich sheaves is
as large as possible. Modules attaining this upper bound were studied by Ulrich in \cite{Ulr} and ever since modules  with this property are called Ulrich modules.
In \cite{ESW}; pg. 543, Eisenbud, Schreyer and Weyman left open the problem: Is every variety $X\subset \PP^n$ the support of an Ulrich sheaf? If so, what is the smallest possible rank for such a sheaf?  The existence of Ulrich sheaves on a projective variety is a challenging problem since few examples are known. For recent results on Ulrich sheaves the reader can see \cite{CH2}, \cite{CMP}, \cite{CM}, \cite{FM}, \cite{MR2013}, \cite{MRP}, \cite{MRP2} and the references quoted there.
This paper is entirely
devoted to construct new families of indecomposable ACM sheaves on standard determinantal varieties and Ulrich sheaves on linear standard determinantal varieties.


\section{The representation type of a determinantal variety}

A possible way to classify ACM varieties is according to the complexity of the category of ACM sheaves that they support.
Recently, inspired by an analogous classification for quivers and for $K$-algebras of finite type, it has been proposed the classification of any  ACM variety as being of \emph{finite, tame or wild representation type} (cf. \cite{DG} for the case of curves and \cite{CH} for the higher dimensional case). Let us introduce these definitions slightly modifying  the usual ones (see also \cite{MR2013} and \cite{MR2014}):

\begin{definition} \label{FineTameWild} \rm  Let $X\subseteq\PP^n$ be an ACM scheme of dimension $d$.

 (i) We say that $X$ is of {\em finite representation type} if it has, up to twist and isomorphism, only a finite number of indecomposable ACM sheaves.

(ii)  $X$ is of {\em tame representation type} if either it has, up to twist and isomorphism, an infinite
 discrete set of indecomposable ACM sheaves or,
 for each rank $r$, the indecomposable ACM sheaves of rank $r$ form a finite number of families of dimension at most $1$.

 (iii) $X$ is of {\em wild representation type} if there exist $l$-dimensional families of non-isomorphic indecomposable ACM sheaves for arbitrary large $l$.

 (iv) $X$ is of {\em Ulrich wild representation type} if there exist $l$-dimensional families of non-isomorphic indecomposable Ulrich sheaves for arbitrary large $l$.
 \end{definition}

 For the sake of completeness we would like to comment the above definitions and other definitions of wildness that we can find in the literature.

\begin{remark} \rm
 (1) Usually tameness is defined  without allowing
 an infinite
 discrete set of indecomposable ACM sheaves. We enlarge the category of varieties of tame representation type because we really want the trichotomy (i) - (iii) to be exhaustive for any $d$-dimensional projective ACM variety.
 This trichotomy is exhaustive for the case of ACM curves: rational
curves are finite, elliptic curves are tame and curves of higher genus are wild.
If we use the standard definition of tameness, then the trichotomy will not be exhaustive
because  in \cite{CaH} it was  proved
 the quadratic cone in $\PP^3$ has an infinite
 discrete set of indecomposable ACM sheaves.

(2) The notion of Ulrich wild representation type was introduced in \cite{FPL} although  in \cite{CMP} the authors already addressed the problem of finding out if a given projective variety X is of wild representation type with respect to the much more restrictive category of its indecomposable Ulrich vector bundles.

(3) In \cite[Definition 1.4]{DG}, Drozd and Greuel  introduced two definitions of wildness: geometrically wild and algebraically wild. Roughly speaking a projective variety $X\subset \PP^n$ is {\em geometrically wild} if the corresponding homogeneous coordinate ring $A$ has arbitrarily large families of indecomposable maximal Cohen-Macaulay  $A$-modules. $X$ is said to be {\em algebraically wild} if
for every finitely generated $K$-algebra $ \Lambda $ there exists a family $\shM$
of maximal Cohen-Macaulay $A$-modules over $\Lambda $ (i.e. a finitely generated $(A,\Lambda)$-bimodule such that for every finite-dimensional $\Lambda$-module $L$ the $A$-module $\shM\otimes _{\Lambda } L$ is CM and $\shM$ is flat over $\Lambda$) such that the following conditions
hold:
\begin{itemize}
\item[(i)] For every indecomposable $\Lambda$-module $L$ the $A$-module $\shM\otimes _{\Lambda } L$ is indecomposable.
\item[(ii)] If $\shM\otimes _{\Lambda } L \cong \shM\otimes _{\Lambda } L'$
 for some finite dimensional $\Lambda $-modules
$L$ and $L'$, then $L\cong L'$.
\end{itemize}

It is not difficult to check that if $X$ is algebraically wild then it is also
geometrically wild. It is not known though conjectured whether the converse is true, i.e. if geometrically wild implies algebraically wild.
\end{remark}

\vskip 2mm
The problem of classifying ACM varieties  according to the complexity of the category
of ACM sheaves that they support has recently  attracted much attention and, in particular, we would like to know whether the trichotomy finite, tame and wild representation type is exhaustive.

\vskip 2mm
Notwithstanding the fact that ACM varieties of finite representation type have been
completely classified into a short list: $\PP^n$, three or less reduced points on $\PP^2$, a smooth hyperquadric, a cubic scroll, the Veronese surface  or a rational normal curve (cf. \cite{BGS}; Theorem C and \cite{EH}; pg. 348);
 it remains open to find out the representation type of the remaining ones.
As examples of a variety of tame representation type we have the elliptic curves and
the quadric cone in $\PP^3$ (cf. \cite{CaH}; Proposition 6.1).  Quite a lot of examples of varieties of wild representation type are known (cf. \cite{To}) but so far only few examples of varieties of Ulrich wild representation type are known. To our knowledge the only examples of varieties of arbitrary dimension and of Ulrich wild representation type are the Segre varieties and the rational normal scrolls other than the quadric in $\PP^3$, the cubic scroll in $\PP^4$ and the quartic scroll in $\PP^5$ (cf. \cite{MR2013} and \cite{CMP}). In \cite{MR2014}, the second author of this paper proved that
if $X\subset \PP^n$ is a smooth ACM variety then  the
restriction
 $\nu _{3|X}$ to $X$ of the Veronese 3-uple embedding
$\nu _3:\PP^n \longrightarrow \PP^{{n+3\choose 3}-1}$ embeds $X$ as a variety
of wild representation type.
Once this paper was finished we become aware of the preprint \cite{FPL} where the authors prove that all projective ACM integral varieties other than cones are of wild CM type except for a few cases that they classify.

\vskip 2mm
In the following, we are going to
prove that
  a general  standard determinantal scheme defined by the maximal minors of a $t\times (t+c-1)$ matrix $\cA$  is of  wild representation type provided the degree matrix associated to $\cA$ satisfies  weak numerical hypothesis.  It is worthwhile to point out that our construction  proves the geometrical wildness of certain standard determinantal schemes $X\subset \PP^n$ (even more, it proves strict wildness, see Definition 2 in \cite{FPL}) but we do not prove  whether they  also are algebraically  wild. As the referee pointed out us the algebraic wildness follows from  \cite{FPL}.

\vskip 2mm
In this section, $X\subset \PP^n$ will be a general standard determinantal
scheme of codimension $c$, $\cA $ the $t \times (t+c-1)$ homogeneous matrix
associated to $X$, $I=I_t(\cA)$, $A=R/I$,$$\varphi:F:=\bigoplus _{j=1}^{t+c-1}
R(-a_j)\longrightarrow G:=\bigoplus _{i=1}^tR(-b_{i})$$ the morphism of free
$R$-modules associated to $\cA$ and $M:=\coker(\varphi )$. Without lost of generality we can assume
\begin{equation}\label{hypothesis0}b_t\le \cdots \le b_1 \text{ and } a_{t+c-1}\le \cdots \le a_2\le a_1.\end{equation}
We set $\ell :=\sum _{i=1}^{t+c-1}a_i-\sum _{j=1}^tb_j$. We have (Proposition \ref{resol} (iii)): $$K_X(n+1)\cong S_{c-1}M(\ell).$$
We will assume
$t>1$ ($X$ is a codimension $c$ complete
intersection if $t=1$) and $c\ge 1$.
 If $t\ge 2$,
$\widetilde{M}$ is a locally free $\odi{X}$-module of rank 1 over the open set
$U:=X\setminus Z$ where $I(Z):=I_{t-1}(\cA)$ and $U\hookrightarrow \PP^n$ is a
local complete intersection. Recall also that if $a_j > b_i$ for any $i,j$,
then $V(I(Z))=Sing(X)$ (cf. \cite{Ba} if the characteristic of the ground field is zero and \cite{W} for the case of arbitrary characteristic), $\codim _X(Sing(X))=c+2$ and $\codim _{\PP^n}X=c$ for a
general choice of $\varphi \in \Hom (F,G)$.

\begin{remark}\rm \label{pamsrem}
(i) Using Remark 2.1 of \cite{KM2011} with $\alpha =2$ we get $\codim _X (Sing(X))\ge 3$, whence $\depth _{I(Z)}A\ge min(3, \dim X+1)$ provided $a_{c+i}\ge b_i$ (resp. $a_{c+i}>b_i$), $1\le i \le t-1$, for $c\ge 2$ (resp. $c=1$).

(ii) Notice that for a general choice of the entries of $\cA$ all assumptions of Proposition \ref{resol} (iv) are fulfilled. In fact, let $\cB$ be the homogeneous matrix obtained deleting the last row of $\cA$. Since $\cA$ is general, $\cB$ is also general and we have $\depth_{I_t(\cA)}R=c$ and $c+1=\depth_{I_{t-1}(\cB)}R\le \depth_{I_{t-1}(\cA)}R$. Therefore, Proposition \ref{resol} (iv) applies and we conclude that $\pd S_{c+1}M=c+1.$
\end{remark}

Indeed Remark 3.3(i) is more generally stated in \cite{KMR2005} where we
shortly indicate a proof using Chang's filtered Bertini-type theorem, see
\cite{W}; Theorem 3 which also covers the characteristic p case. Since we in
\cite{KMR2005} assumed that $\cA$ was minimal, \cite{KMR2005}; Remark 2.7
applies provided $a_j \ne b_i$ for all $j,i$. Using e.g. \cite{Kle2015};
Proposition 5.1 which describes a general element of $W(b;a)$ when $a_j  =
b_i$ for some $j,i$, and induction on the number of times an equality  $a_j  =
b_i$ occur, it is a rather easy exercise to see that \cite{KMR2005}; Remark
2.7 holds also for a non-minimal matrix $\cA$, i.e. that we may skip the
assumption $a_j \ne b_i$ for all $j,i$ above when $c \ge 2$.

\vskip 2mm
As a main tool for constructing families of ACM sheaves on $X$ of  arbitrary high rank, we will use iterated extensions of ACM sheaves on $X$ of lower rank. So, let us start with a series of technical results that will allow us to ensure the existence of such extensions.

\begin{lemma}\label{ExtSiMSjMExtS2i-j-1+cMR} We keep the above notation and we assume  $\depth _{I(Z)}A\ge 3$. For any $0\le j \le c$ and $i=0,1$, we have
\begin{equation}
\Ext^{i}_{\odi{X}}(\widetilde{S_jM},\widetilde{M}^{\vee }(\mu))\cong \  _0\! \Ext^{i}_A(S_jM,M^{\vee}(\mu))\cong \  _0\! \Ext^{i+c}_R(S_{j+c}M,R(\mu -\ell)).
\end{equation}
\end{lemma}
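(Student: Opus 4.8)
The plan is to establish the two isomorphisms separately, reading the chain of
equalities from left to right. The first isomorphism,
$\Ext^{i}_{\odi{X}}(\widetilde{S_jM},\widetilde{M}^{\vee}(\mu))\cong \
_0\!\Ext^{i}_A(S_jM,M^{\vee}(\mu))$, is precisely the kind of sheaf-to-module
comparison recorded in \eqref{NM}. First I would set $N=S_jM$ and
$L=M^{\vee}(\mu)$, both of which are finitely generated $A$-modules, and take
$U=X\setminus Z$ with $I(Z)=I_{t-1}(\cA)$, so that $U$ is a local complete
intersection and $\widetilde{N}$, $\widetilde{L}$ are locally free of rank $1$
on $U$ (using $t\ge 2$). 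The hypothesis $\depth_{I(Z)}A\ge 3$ plays the role of
$\depth_{I(Z)}L\ge r+1$ with $r=2$; since we only need $i=0,1<2$, the map in
\eqref{NM} is a degree-preserving isomorphism. Taking the degree-zero graded
piece of the resulting isomorphism $\Ext^{i}_A(N,L)\cong
\oplus_{\nu}\Ext^{i}_{\odi{X}}(\widetilde{N},\widetilde{L}(\nu))$ gives the first
claimed identity (so $M^{\vee}$ here means $\Hom_A(M,A)=S_{-1}M$ in the notation
$S_{-1}M:=\Hom(M,R/I(\cA))$ introduced before Proposition \ref{resol}).

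The second isomorphism, $_0\!\Ext^{i}_A(S_jM,M^{\vee}(\mu))\cong \
_0\!\Ext^{i+c}_R(S_{j+c}M,R(\mu-\ell))$, is the arithmetic heart of the lemma
and is where I expect the real work to be. The idea is to pass from $\Ext$ over
$A$ to $\Ext$ over the polynomial ring $R$ via a duality/shift in homological
degree by $c=\operatorname{codim}X$, exploiting that $A$ is Cohen-Macaulay of
codimension $c$ in $R$ with canonical module $K_A=\Ext^c_R(A,R)(-n-1)$. Concretely I
would invoke the identification of the $S_jM$ as the modules resolved by the
complexes $\cD_j(\varphi)$ from Proposition \ref{resol}: since $\cD_{j+c}(\varphi)$
is (for the relevant range of $j$, using Remark \ref{pamsrem}(ii) to guarantee
acyclicity and the correct projective dimension) a minimal free $R$-resolution of
$S_{j+c}M$, one can compute $\Ext^{i+c}_R(S_{j+c}M,R)$ directly from the dual
complex. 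The key structural fact is that $S_{-1}M=M^{\vee}$ together with the
relation $K_X\cong S_{c-1}M(\ell)$ from Proposition \ref{resol}(iii) ties the
symmetric powers to their $R$-duals with a twist by $-\ell$; this is the source
of the shift $R(\mu-\ell)$ on the right-hand side. I would organize this as a
graded local duality statement, $\Ext^{i}_A(N,K_A)\cong
\Ext^{i+c}_R(N,R)(\ldots)$ up to twist, or equivalently use the functorial
isomorphism $\Ext^i_A(N,\Hom_A(P,K_A))\cong \Ext^{i+c}_R(N\otimes_A P, R)$-type
change-of-rings relation, applied with the appropriate symmetric power in place of
$P$.

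The main obstacle will be bookkeeping the twists and the homological index so that
the symmetric power $S_jM$ on the source, paired against $M^{\vee}$, matches
$S_{j+c}M$ on the $R$-side with exactly the shift $-\ell$ and exactly the degree
increase $+c$. I would pin this down by first treating $\Hom_A(S_jM,M^{\vee})$ and
identifying $M^{\vee}=S_{-1}M$, then using the multiplicative structure of the
$S_\bullet M$ (the pairing $S_jM\otimes S_{-1}M\to S_{j-1}M$ or the analogous
contraction built from $\mu_i$) to relate $\Ext_A^\bullet(S_jM,S_{-1}M)$ to
$\Ext_R^{\bullet+c}$ of a single symmetric power whose index is forced to be
$j+c$ by comparing the lengths of the complexes $\cD_j$ and $\cD_{j+c}$. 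The
acyclicity inputs are already supplied: Proposition \ref{resol}(i)--(ii) cover
$-1\le j\le c$, and Remark \ref{pamsrem}(ii) extends the needed resolution
property to $S_{c+1}M$, which is exactly the borderline case $j=1$ that arises
here. Once the twist $\mu-\ell$ and the degree shift are verified on the level of
the generating degrees of these explicit resolutions, taking the degree-zero part
$_0\!\Ext$ throughout yields the stated isomorphism for $i=0,1$.
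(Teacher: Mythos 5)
Your proposal is correct and follows essentially the same route as the paper: the first isomorphism is exactly the comparison (\ref{NM}) with $r=2$ (using that $M^{\vee}\cong S_{-1}M$ is MCM so that $\depth_{I(Z)}M^{\vee}\ge 3$), and the second is obtained from the multiplicative structure of the $S_{\bullet}M$ together with $K_X(n+1)\cong S_{c-1}M(\ell)$ and the graded duality $\Ext^{i}_A(-,K_A)\cong\Ext^{i+c}_R(-,R(-n-1))$, which yields the twist $\mu-\ell$ and the degree shift $+c$. The one detail to make explicit when writing this up is that the pairings $S_aM\otimes S_bM\to S_{a+b}M$ are isomorphisms only over $U=X\setminus Z$ where $\widetilde{M}$ is invertible, so the rewriting of ${\mathcal H}om(\widetilde{S_jM},\widetilde{M}^{\vee}(\mu))$ as ${\mathcal H}om(\widetilde{S_{j+c}M}(\ell-\mu),\widetilde{K_A}(n+1))$ must be carried out inside $\Hl^{i}(U,{\mathcal H}om(-,-))$, after which one applies (\ref{NM}) a second time (legitimate since $K_A$ is MCM) before invoking local duality; the explicit resolutions $\cD_{j+c}(\varphi)$ and Remark \ref{pamsrem}(ii) are not needed for the lemma itself, only for the subsequent computations.
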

\begin{proof} Since $S_{-1}M \cong M^{\vee}$ is MCM by Proposition \ref{resol} and $\widetilde{S_jM}\arrowvert_{X\setminus Z}$ is invertible, (\ref{NM}) with $r=2$ yields
$$\  _0\! \Ext^{i}_A(S_jM,M^{\vee}(\mu))\cong  \Ext^{i}_{\odi{X}}(\widetilde{S_jM},\widetilde{M}^{\vee }(\mu))$$
and
$$\begin{array}{rcl} \Ext^{i}_{\odi{X}}(\widetilde{S_jM},\widetilde{M}^{\vee }(\mu)) & \cong & \Hl^{i}(U,{\mathcal H}om_{\odi{X}}(\widetilde{S_jM},\widetilde{M}^{\vee }(\mu))) \\
  & \cong &
  \Hl^{i}(U,{\mathcal H}om_{\odi{X}}(\widetilde{S_{j+1}M}\otimes \widetilde{S_{c-1}M}(-\mu),\widetilde{S_{c-1}M})) \\
  & \cong & \Hl^{i}(U,{\mathcal H}om_{\odi{X}}(\widetilde{S_{c+j}M}(\ell -\mu),\widetilde{K_A}(n+1))) \\
  & \cong &   _0\! \Ext^{i}_A(S_{j+c}M(\ell -\mu),K_A(n+1)) \\
  & \cong &   _0\! \Ext^{i+c}_R(S_{j+c}M(\ell -\mu),R) \end{array}$$
for $i=0,1$ where the second last isomorphism is due to  (\ref{NM}) and the last isomorphism follows from local duality used twice (both for $A$ and $R$), or from a well known property of canonical modules since the canonical module of $R$ is $K_R=R(-n-1)$.
\end{proof}

\begin{proposition}\label{HomM-Mv} We keep the above notation and we assume that $\depth _{I(Z)}A\ge 3$ and $\sum _{j=c+1}^{t+c-1}a_j>1+\sum_{i=1}^{t-1}b_i+b_1-b_t$.  For $\mu =t+1-\sum _{j=c+1}^{t+c-1}a_j+\sum_{i=1}^{t}b_i+b_1$ we have:

\begin{itemize}
\item[(1)] $\Hom_{\odi{X}}(\widetilde{M}^{\vee }(t-\mu),\widetilde{M})=0$;
\item[(2)] $\Hom_{\odi{X}}(\widetilde{M},\widetilde{M}^{\vee }(t-\mu))=0$;
\item[(3)] $ \Ext^1_{\odi{X}}(\widetilde{M},\widetilde{M}^{\vee }(t-\mu))\cong (\wedge ^{c+1}F^{*})(-b_1-1-\sum_{j=1}^ca_j)_0$.
\end{itemize}
\end{proposition}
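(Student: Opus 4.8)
The plan is to reduce each of the three assertions to the vanishing or the explicit computation of a single graded piece of an $R$-module. First I would record the value of the twist that appears everywhere: writing $d:=t-\mu-\ell$ and substituting the given $\mu$ together with $\ell=\sum_{i=1}^{t+c-1}a_i-\sum_{j=1}^tb_j$, a short cancellation yields
\[
d=t-\mu-\ell=-b_1-1-\sum_{i=1}^{c}a_i ,
\]
so that $R(d)$ is exactly the twist occurring in part (3). For parts (2) and (3) I would apply Lemma \ref{ExtSiMSjMExtS2i-j-1+cMR} with $j=1$ (legitimate since $c\ge1$) and with $t-\mu$ in the role of $\mu$, which gives
\[
\Ext^{i}_{\odi{X}}(\widetilde{M},\widetilde{M}^{\vee}(t-\mu))\cong {}_{0}\Ext^{i+c}_R(S_{c+1}M,R(d)),\qquad i=0,1 .
\]
Hence (2) becomes the statement $\Ext^{c}_R(S_{c+1}M,R)_d=0$ and (3) becomes the computation of $\Ext^{c+1}_R(S_{c+1}M,R)_d$.

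To evaluate these two $\Ext$ groups I would use the explicit free resolution of $S_{c+1}M$. By Proposition \ref{resol}(iv) and Remark \ref{pamsrem}(ii) one has $\pd_R S_{c+1}M=c+1$ with resolution $\cD_{c+1}(\varphi)$; since the dual half of the splice (\ref{splice}) is $\cC_{-2}(\varphi)^{*}=0$ for the index $c+1$, this resolution is simply the generalized Koszul complex
\[
\cC_{c+1}(\varphi):\ 0\to \wedge^{c+1}F\to \wedge^{c}F\otimes S_1G\to\cdots\to \wedge^0F\otimes S_{c+1}G\to 0 .
\]
Dualizing into $R$, the groups $\Ext^{c}_R(S_{c+1}M,R)$ and $\Ext^{c+1}_R(S_{c+1}M,R)$ are the cohomology and the cokernel at the last two spots $\wedge^{c}F^{*}\otimes G^{*}$ and $\wedge^{c+1}F^{*}$. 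The key is a one-line degree count: every summand of $\wedge^{c}F^{*}\otimes G^{*}$ has the form $R(\sum_{j\in J}a_j+b_i)$ with $|J|=c$, and by the ordering (\ref{hypothesis0}) we have $\sum_{j\in J}a_j+b_i\le \sum_{i=1}^{c}a_i+b_1=-d-1$, so $(\wedge^{c}F^{*}\otimes G^{*})_d=0$. Therefore $\Ext^{c}_R(S_{c+1}M,R)_d$ is a subquotient of $0$, proving (2), while $\Ext^{c+1}_R(S_{c+1}M,R)_d$ is the cokernel of a map whose source vanishes in degree $d$, so it equals $(\wedge^{c+1}F^{*})_d=(\wedge^{c+1}F^{*})(-b_1-1-\sum_{j=1}^ca_j)_0$, which is exactly (3).

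Part (1) is different because there $\widetilde{M}^{\vee}$ is the source, so the Lemma does not apply directly; instead I would pass to the open set $U$. On $U$ the sheaf $\widetilde{M}$ is invertible, so $\mathcal{H}om_{\odi{X}}(\widetilde{M}^{\vee},\widetilde{M})|_U\cong \widetilde{S_2M}|_U$, and via (\ref{NM}) (first identifying $\Ext_{\odi{X}}$ with $U$-cohomology of the sheaf $\mathcal{H}om$, then with $N=A$ and $L=S_2M$, which is MCM by Proposition \ref{resol} --- whence $\depth_{I(Z)}S_2M\ge3$ --- so that the comparison is an isomorphism in degree $0$) I obtain
\[
\Hom_{\odi{X}}(\widetilde{M}^{\vee}(t-\mu),\widetilde{M})\cong \Hl^0(U,\widetilde{S_2M}(\mu-t))\cong (S_2M)_{\mu-t} .
\]
Since $S_2M$ is a quotient of $S_2G$, which starts in degree $2b_t$, it suffices that $\mu-t<2b_t$; rearranging, this inequality is literally the hypothesis $\sum_{j=c+1}^{t+c-1}a_j>1+\sum_{i=1}^{t-1}b_i+b_1-b_t$, so $(S_2M)_{\mu-t}=0$ and (1) follows.

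The main difficulty I anticipate is bookkeeping rather than conceptual: one must correctly identify $\cD_{c+1}(\varphi)$ with the bare Koszul complex $\cC_{c+1}(\varphi)$ and keep all twists consistent through the Lemma, so that the single value of $d$ and the one inequality above do all the work. It is worth noting where the hypotheses enter: parts (2) and (3) use only the ordering (\ref{hypothesis0}) and the depth assumption needed for the Lemma, whereas the numerical hypothesis on the $a_j$ is precisely the condition $\mu-t<2b_t$ forcing the vanishing in (1). The one point deserving separate care is the application of (\ref{NM}) to $S_2M$ in (1): this needs $\depth_{I(Z)}S_2M\ge2$, which is immediate from MCM-ness when $c\ge2$ but must be checked by hand in the degenerate case $c=1$, where $S_2M$ need not be maximal Cohen-Macaulay.
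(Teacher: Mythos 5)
Your proposal is correct and follows essentially the same route as the paper: parts (2) and (3) via Lemma \ref{ExtSiMSjMExtS2i-j-1+cMR} and the resolution $0\to\wedge^{c+1}F\to\wedge^{c}F\otimes G\to\cdots\to S_{c+1}G\to S_{c+1}M\to 0$, with the same degree count showing $(\wedge^{c}F^{*}\otimes G^{*})_{d}=0$, and part (1) via the identification with $(S_2M)_{\mu-t}$ and the surjection from $S_2G$. Your closing remark about needing $\depth_{I(Z)}S_2M\ge 2$ to apply (\ref{NM}) in part (1) is a legitimate point of care that the paper passes over silently with ``as in Lemma \ref{ExtSiMSjMExtS2i-j-1+cMR}''.
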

\begin{proof} (1) As in Lemma \ref{ExtSiMSjMExtS2i-j-1+cMR}, we get that
$\  _0\!\Hom(\widetilde{M}^{\vee }(t-\mu),\widetilde{M})\cong S_2M(\mu-t)_0$. Since
$$ \cdots \longrightarrow S_2G=\oplus _{1\le i, j\le t}R(-b_i-b_j)\longrightarrow S_2M\longrightarrow 0$$
is exact we get $\  _0\!\Hom(\widetilde{M}^{\vee }(t-\mu),\widetilde{M})=0$ provided $\mu-t-b_i-b_j<0$ for any $1\le i ,j\le t$. Since $b_1\ge \cdots \ge b_t$, this means $t>\mu -2b_t$. But $t>\mu -2b_t=t+1-\sum _{j=c+1}^{t+c-1}a_j+\sum_{i=1}^{t}b_i+b_1-2b_t$  is true  by the hypothesis $\sum _{j=c+1}^{t+c-1}a_j>1+\sum_{i=1}^{t-1}b_i+b_1-b_t$.

(2) By Lemma \ref{ExtSiMSjMExtS2i-j-1+cMR} we have
$$\Ext^{i}_{\odi{X}}(\widetilde{M},\widetilde{M}^{\vee }(t-\mu))\cong \  _0\! \Ext^{i+c}_R(S_{1+c}M,R(t-\mu -\ell)) \text{ for } i=0,1.$$
We compute the last $\Ext $ using the projective resolution of $S_{c+1}M$:
$$ 0\longrightarrow  \wedge ^{c+1}F \longrightarrow \wedge ^cF\otimes G \longrightarrow \cdots   \longrightarrow S_{c+1}G \longrightarrow S_{c+1}M\longrightarrow 0$$
given in \cite{eise} (see also Proposition \ref{resol} and Remark \ref{pamsrem}(ii)), and we get
the exact sequences

$$\longrightarrow \ _0\!\Hom(\wedge ^cF\otimes G,R(t-\mu-\ell)) \stackrel{e }{\longrightarrow} \
_0\!\Hom(\wedge ^{c+1}F,R(t-\mu-\ell))  \longrightarrow \  _0\!\Ext^{c+1}_R(S_{1+c}M,R(t-\mu -\ell))\longrightarrow 0,$$
and
$$ _0\!\Hom(\wedge ^{c-1}F\otimes S_2G,R(t-\mu-\ell)) \longrightarrow \ker (e)\longrightarrow \  _0\!\Ext^{c}_R(S_{1+c}M,R(t-\mu -\ell)) \longrightarrow 0.$$
Using our hypothesis on $\mu $, we obtain
$$ _0\!\Hom(\wedge ^cF\otimes G,R(t-\mu-\ell))  \cong  (\wedge ^cF^*\otimes G^*(t-\mu-\ell))_0
 \cong $$ $$
 \bigoplus_{1\le s_1<\cdots <s_c\le t+c-1 \atop 1\le j\le t}R((\sum _{i=1}^ca_{s_i}) +b_j+t-\mu-\ell)_0 \\
 =  0.  $$
The last equality is true because substituting $\mu$ and taking into account that $b_t\le \cdots \le b_1$ and $
 a_{t+c-1}\le \cdots \le a_2\le a_1$, we get $\sum _{i=1}^ca_{s_i} +b_j+t-\mu-\ell=\sum _{i=1}^ca_{s_i} +b_j-1-b_1-\sum_{i=1}^ca_i<0$.

(3) We also have $$ \  _0\!\Ext^{c+1}_R(S_{1+c}M,R(t-\mu -\ell))  \cong  (\wedge ^{c+1}F^*)(t-\mu-\ell))_0=(\wedge ^{c+1}F^{*})(-b_1-1-\sum_{j=1}^ca_j)_0 .$$
\end{proof}

\begin{corollary} \label{corlin} Let $X\subset \PP^n$ be a general linear standard determinantal variety of codimension $c\ge 1$. Assume $n-c\ge 2$ and $t\ge 3$. Then, we have:
\begin{itemize}
\item[(1)] $\Hom_{\odi{X}}(\widetilde{M}^{\vee }(t-2),\widetilde{M})=0$;
\item[(2)] $\Hom_{\odi{X}}(\widetilde{M},\widetilde{M}^{\vee }(t-2))=0$;
\item[(3)] $ \Ext^1_{\odi{X}}(\widetilde{M},\widetilde{M}^{\vee }(t-2))\cong (\wedge ^{c+1}F^{*})(-1-c)_0={t+c-1\choose c+1}\ge 3$.
\end{itemize}
\end{corollary}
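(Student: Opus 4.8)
The plan is to obtain the corollary as a direct specialization of Proposition~\ref{HomM-Mv} to the case in which every entry of $\cA$ is linear. First I would normalize the degrees: since all entries of $\cA$ have degree one we may take $b_i=0$ for $1\le i\le t$ and $a_j=1$ for $1\le j\le t+c-1$, so that $\ell=\sum_{i}a_i-\sum_{j}b_j=t+c-1$ and $F=R(-1)^{t+c-1}$.

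The first substantive point is to verify the two standing hypotheses of Proposition~\ref{HomM-Mv}. For the depth condition $\depth_{I(Z)}A\ge 3$ I would invoke Remark~\ref{pamsrem}(i): in the linear case $a_{c+i}=1>0=b_i$ for all $1\le i\le t-1$, so the inequality required there holds, \emph{including} the strict version needed when $c=1$; and since $n-c\ge 2$ we have $\dim X=n-c\ge 2$, whence $\min(3,\dim X+1)=3$ and $\depth_{I(Z)}A\ge 3$. For the numerical hypothesis $\sum_{j=c+1}^{t+c-1}a_j>1+\sum_{i=1}^{t-1}b_i+b_1-b_t$, the left-hand side equals $t-1$ and the right-hand side equals $1$, so it reduces to $t-1>1$, i.e.\ exactly $t\ge 3$.

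Next I would evaluate the relevant twist: $\mu=t+1-\sum_{j=c+1}^{t+c-1}a_j+\sum_{i=1}^{t}b_i+b_1=t+1-(t-1)=2$, so $t-\mu=t-2$, which matches statements (1)--(3). Parts (1) and (2) are then immediate from parts (1) and (2) of Proposition~\ref{HomM-Mv}. For part (3), the proposition yields $\Ext^1_{\odi{X}}(\widetilde{M},\widetilde{M}^{\vee}(t-2))\cong(\wedge^{c+1}F^{*})(-b_1-1-\sum_{j=1}^{c}a_j)_0=(\wedge^{c+1}F^{*})(-1-c)_0$. Because $F^{*}=R(1)^{t+c-1}$, we have $\wedge^{c+1}F^{*}\cong R(c+1)^{{t+c-1\choose c+1}}$, so after twisting by $-1-c$ the whole module sits in degree $0$ and its degree-zero part has dimension ${t+c-1\choose c+1}$.

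It remains to check the inequality ${t+c-1\choose c+1}\ge 3$. For $c=1$ this is ${t\choose 2}=t(t-1)/2\ge 3$, valid since $t\ge 3$; for $c\ge 2$ one has $t+c-1\ge c+2$ and hence ${t+c-1\choose c+1}\ge{c+2\choose c+1}=c+2\ge 4$. Since the whole argument is substitution into already-established formulas, there is no genuine obstacle; the step demanding the most care is the verification of the depth hypothesis via Remark~\ref{pamsrem}(i), specifically remembering that the case $c=1$ needs the strict inequality $a_{c+i}>b_i$, which nonetheless holds here because the entries are linear.
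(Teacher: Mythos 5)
Your proof is correct and takes essentially the same route as the paper, which simply observes that the corollary follows from Proposition \ref{HomM-Mv} by setting $a_i=1$, $b_j=0$ and hence $\mu=2$; you have merely filled in the routine verifications (the depth hypothesis via Remark \ref{pamsrem}(i), the numerical condition reducing to $t\ge 3$, and the computation of $\dim(\wedge^{c+1}F^{*})(-1-c)_0={t+c-1\choose c+1}\ge 3$), all of which are accurate.
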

\begin{proof} It follows from Proposition \ref{HomM-Mv} taking $a_i=1$ for all $i$ and $b_j=0$ for all $j$ and, hence, $\mu = 2$.
\end{proof}

\begin{remark} \label{rem36} \rm (i) Note that if we assume $a_j>b_i$ for any $i,j$  (or, equivalently, $a_{t+c-1}>b_1$) then the hypothesis $\sum _{j=c+1}^{t+c-1}a_j>1+\sum_{i=1}^{t-1}b_i+b_1-b_t$
 is a  very weak assumption.  Indeed, since  $a_j\ge b_i+1$ for any $j,i$,  we have
$$\sum _{j=c+1}^{t+c-1}a_j\ge b_1+\sum_{i=1}^{t-2}b_i+(t-1).$$
Hence, if we assume $t>2+b_{t-1}-b_t$, we get $\sum _{j=c+1}^{t+c-1}a_j>1+\sum_{i=1}^{t-1}b_i+b_1-b_t$ because
$$b_1+\sum_{i=1}^{t-2}b_i+(t-1)>b_1+\sum_{i=1}^{t-2}b_i+1+b_{t-1}-b_t.$$

(ii) Assuming only $a_{c+i}\ge b_i$ for $1\le i \le t-1$ (cf. Remark \ref{pamsrem} (i)) and that at least $2+b_1-b_t$ of the inequalities are strict (or otherwise increase the sum $\sum _{j=c+1}^{t+c-1} a_j$ by $2+b_1-b_t$ compared to $\sum _{i=1}^{t-1}b_i$, e.g. for $t=2$, assume $a_{c+1}\ge 2+2b_1-b_2$) we easily prove that  $\sum _{j=c+1}^{t+c-1}a_j>1+\sum_{i=1}^{t-1}b_i+b_1-b_t$.
\end{remark}

By paying a little more extra attention to the cases $t=2$, $t=3$ and $t\ge 4$ we get the following:

\begin{corollary} \label{ge3} We keep the above notation, we set $\mu :=t+1-\sum _{j=c+1}^{t+c-1}a_j+\sum_{i=1}^{t}b_i+b_1$ and we assume that $\depth _{I(Z)}A\ge 3$ and $\sum _{j=c+1}^{t+c-1}a_j>1+\sum_{i=1}^{t-1}b_i+b_1-b_t$. If $t=3$ and $a_{c+1}\le 1+b_1$ we
 also assume $a_{c+2} > b_1$ and $a_c=a_{c+1}$; and if
  $t\ge 4$ and $a_{c+1}\le 1+b_1$ we  assume  $a_{c+3}>b_1$ . Then, it holds:
\begin{itemize}
\item[(1)] $\  _0\!\Hom(M^{\vee }(t-\mu),M)=\  _0\!\Hom(M,M^{\vee }(t-\mu))=0$;
\item[(2)] $\  \dim \,  _0\! \Ext^1_{A}(M,M^{\vee }(t-\mu))\ge 3$.
\end{itemize}

\end{corollary}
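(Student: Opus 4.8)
The plan is to deduce both parts from Proposition \ref{HomM-Mv}, reinterpreting its sheaf-theoretic conclusions as statements about the $A$-modules via the comparison isomorphisms of Lemma \ref{ExtSiMSjMExtS2i-j-1+cMR} and (\ref{NM}), and then to upgrade the bare nonvanishing of Proposition \ref{HomM-Mv}(3) to the numerical bound $\geq 3$ by an explicit count of the graded pieces of $\wedge^{c+1}F^{*}$. Note first that our numerical hypothesis is exactly the one needed in Proposition \ref{HomM-Mv}. For part (1) I would record that $M=S_1M$ is a maximal Cohen--Macaulay $A$-module: by Proposition \ref{resol}(ii) it has $\pd_R M=c$, so $\depth M=n+1-c=\di A$, just as for $M^{\vee}=S_{-1}M$. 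Both restrict to invertible sheaves on $U=X\setminus Z$, so, since $\depth_{I(Z)}A\geq 3$, the comparison map (\ref{NM}) with $r=2$ is an isomorphism in degree $i=0$ with either $M$ or $M^{\vee}(t-\mu)$ as source. Hence ${}_0\!\Hom(M,M^{\vee}(t-\mu))\cong\Hom_{\odi X}(\widetilde M,\widetilde M^{\vee}(t-\mu))$ and ${}_0\!\Hom(M^{\vee}(t-\mu),M)\cong\Hom_{\odi X}(\widetilde M^{\vee}(t-\mu),\widetilde M)$, and these vanish by Proposition \ref{HomM-Mv}(2) and (1). This is exactly part (1).

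For part (2), Lemma \ref{ExtSiMSjMExtS2i-j-1+cMR} (with $j=1$, $i=1$) together with Proposition \ref{HomM-Mv}(3) gives
$$
{}_0\!\Ext^1_A(M,M^{\vee}(t-\mu))\;\cong\;(\wedge^{c+1}F^{*})\bigl(-b_1-1-\sum_{j=1}^c a_j\bigr)_0 .
$$
Writing $\wedge^{c+1}F^{*}=\bigoplus_S R(\sum_{i\in S}a_i)$, the sum running over the $(c+1)$-subsets $S\subseteq\{1,\dots,t+c-1\}$, the right-hand side becomes $\bigoplus_S R(d_S)_0$ with $d_S:=\sum_{i\in S}a_i-b_1-1-\sum_{j=1}^c a_j$; each summand contributes $\binom{n+d_S}{n}$ when $d_S\geq 0$ and $0$ otherwise. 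Moreover $\depth_{I(Z)}A\geq 3$ forces $n-c\geq 2$, hence $n\geq c+2\geq 3$. It therefore suffices to exhibit subsets $S$ with $d_S\geq 0$ whose contributions sum to at least $3$.

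Here I would split on $a_{c+1}$ (recall $a_1\geq\cdots\geq a_{t+c-1}$). If $a_{c+1}>1+b_1$, then $S=\{1,\dots,c+1\}$ gives $d_S=a_{c+1}-b_1-1\geq 1$, so that one summand alone contributes $\binom{n+1}{n}=n+1\geq 3$; this also disposes of the case $t=2$, where $\{1,\dots,c+1\}$ is the only subset and the numerical hypothesis forces $a_{c+1}\geq 2+2b_1-b_2>1+b_1$. If instead $a_{c+1}\leq 1+b_1$, every relevant graded piece is at most one-dimensional, so I must produce three distinct subsets with $d_S=0$. For $t=3$ the extra hypotheses $a_{c+2}>b_1$ and $a_c=a_{c+1}$ force, by monotonicity, $a_c=a_{c+1}=a_{c+2}=b_1+1$, and the three subsets $\{1,\dots,c+2\}\setminus\{i\}$, $i\in\{c,c+1,c+2\}$, all satisfy $d_S=0$. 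For $t\geq 4$ the extra hypothesis $a_{c+3}>b_1$ forces $a_{c+1}=a_{c+2}=a_{c+3}=b_1+1$, and the three subsets $\{1,\dots,c\}\cup\{c+k\}$, $k\in\{1,2,3\}$ --- distinct and legitimate precisely because $c+3\leq t+c-1$ --- each give $d_S=a_{c+k}-b_1-1=0$. In every case the total dimension is $\geq 3$.

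The delicate point, and the only place where genuine work is required, is the boundary subcase $a_{c+1}\leq 1+b_1$: there no single graded piece exceeds dimension one, so the bound must come entirely from counting how many $(c+1)$-subsets achieve $d_S=0$. This is exactly why the statement separates $t=3$ from $t\geq 4$: for $t=3$ the index $c+3$ is unavailable (it exceeds $t+c-1$), which is why one instead imposes $a_c=a_{c+1}$ and obtains the third subset by removing the top index $c$ rather than by adjoining $c+3$.
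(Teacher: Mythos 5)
Your proof is correct and follows essentially the same route as the paper's: both reduce part (1) to Proposition \ref{HomM-Mv}(1)--(2) via the comparison isomorphism, identify $_0\!\Ext^1_A(M,M^{\vee}(t-\mu))$ with $(\wedge^{c+1}F^{*})(-b_1-1-\sum_{j=1}^c a_j)_0$ by Proposition \ref{HomM-Mv}(3), and then count direct summands, splitting on whether $a_{c+1}>1+b_1$ (one summand of dimension $\ge n+1$) or $a_{c+1}\le 1+b_1$ (three summands equal to $R_0$, produced for $t=3$ by removing one of the indices $c,c+1,c+2$ and for $t\ge 4$ by adjoining one of $c+1,c+2,c+3$, exactly as in the paper's Cases b and c). Your explicit justification that the $t=2$ case always lands in the first alternative matches the paper's Case a.
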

\begin{proof} (1) It follows from Proposition \ref{HomM-Mv}.

(2) By Proposition \ref{HomM-Mv}(3), we have  $$ \Ext^1_{A}(M,M^{\vee }(t-\mu))=
\bigoplus_{1\le s_1<\cdots <s_{c+1}\le t+c-1 }R((\sum _{i=1}^{c+1}a_{s_i})-b_1-1-\sum_{j=1}^ca_j)$$
and we will carefully analyze how many non-zero summands we have. We distinguish 3 cases:

\vskip 2mm
\noindent \underline{Case a:} $t=2$. The assumption $\sum _{j=c+1}^{t+c-1}a_j>1+\sum_{i=1}^{t-1}b_i+b_1-b_t$ becomes $a_{c+1}>1+2b_1-b_2$  and we get $a_{c+1}>1+b_1$. By Proposition \ref{HomM-Mv}(3) with $\mu =3-a_{c+1}+2b_1+b_2$, we have
$$\  \dim \,  _0\! \Ext^1_{A}(M,M^{\vee }(t-\mu))=\dim R(a_{c+1}-b_1-1)_0\ge \dim R_1=n+1\ge 3.$$

\vskip 2mm
\noindent \underline{Case b:} $t=3$. If we take  $\mu =4-(a_{c+1}+a_{c+2})+2b_1+b_2+b_3$, we get
$$\  _0\!\Ext^{1}_A(M,M^{\vee}(t-\mu)) =  \bigoplus_{j=1}^{c+2}R(-b_1-1+a_{c+1}+a_{c+2}-a_j)_{0}.$$
 If $a_{c+1}>b_1+1$ one of the direct summands is $R_s$ for $s\ge 1$ of dimension at least $n+1$. If  $a_{c+1}\le 1+b_1$, we have $1+b_1\ge a_{c+1}\ge a_{c+2}\ge 1+b_1$ and $1+b_1=a_{c+1}=a_{c+2}=a_{c}$ by hypothesis. Therefore at least 3 direct summands are $R_0$ and we get $\  \dim \, _0\! \Ext^1_{A}(M,M^{\vee }(t-\mu))\ge 3$.
\vskip 2mm
\noindent \underline{Case c:} $t\ge 4$. By Proposition \ref{HomM-Mv}(3) we have $$\  _0\!\Ext^{1}_A(M,M^{\vee}(t-\mu)) =  \bigoplus_{1\le s_1<\cdots <s_{c+1}\le t+c+1 }R((\sum _{i=1}^{c+1}a_{s_i})-b_1-1-\sum_{j=1}^ca_j)_{0}.$$  Again if $a_{c+1}>b_1+1$ one of the direct summands is $R_s$ for $s\ge 1$ of dimension at least $n+1$.  If  $a_{c+1}\le 1+b_1$, we have $1+b_1\ge a_{c+1}\ge a_{c+2}\ge a_{c+3}\ge 1+b_1$, i.e.  $1+b_1=a_{c+1}=a_{c+2}=a_{c+3}$. It follows that at least 3 direct summands are $R_0$, hence $\  \dim \, _0\! \Ext^1_{A}(M,M^{\vee }(t-\mu))\ge 3$ and we are done.
\end{proof}

As we have seen in Corollary  \ref{corlin} and Remark  \ref{rem36} the assumption on $a_j$ and $b_i$ in Proposition  \ref{HomM-Mv}  is often fulfilled for $t\ge 3$ while there are several interesting cases where it may fail for $t=2$ and $a_{c+1}\le 1+b_1$ or more generally for $t=2$ and $a_{c+1}\le 1+2b_1-b_2$. We therefore want to improve upon Proposition \ref{HomM-Mv}  in the case
$t=2$ and $a_{c+1}= 1+2b_1-b_2$ by using its proof with $\mu _1:=\mu -1=2-a_{c+1}+2b_1+b_2$. Indeed,  we have

\begin{proposition} \label{t=2}
Let $t=2$, $\mu _1=2-a_{c+1}+2b_1+b_2$ and assume $\depth _{I(Z)}A\ge 3$ and $a_{c+1}= 1+2b_1-b_2$. Define $\alpha :=   \# \{i/ a_i=a_{c+1} \}$ and suppose $2\alpha \le n-2$ in the case $a_{c+1}=1+b_1$. Then, we have:
\begin{itemize}
\item[(1)] $\  _0\!\Hom(M^{\vee }(2-\mu _1),M)=0$;
\item[(2)] $\  _0\!\Hom(M,M^{\vee }(2-\mu_1))=0$;
\item[(3)] $\  \dim  \, _0\! \Ext^1_{A}(M,M^{\vee }(2-\mu_1))\ge 3$.
\end{itemize}
\end{proposition}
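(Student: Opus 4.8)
We follow the strategy of Proposition \ref{HomM-Mv}, redoing its proof with the smaller twist $\mu_1=\mu-1$ and $t=2$. The conceptual point is a shift of degrees: replacing $\mu$ by $\mu_1$ raises by one every exponent occurring in that proof, and as a consequence the group ${}_0\!\Hom(\wedge^cF\otimes G,R(2-\mu_1-\ell))$, which vanished identically in Proposition \ref{HomM-Mv} and made the argument termwise, is now nonzero. Hence neither the vanishing (2) nor the lower bound (3) can be read off term by term; both must be extracted from the connecting map of the resolution of $S_{c+1}M$, and this is exactly where the invariant $\alpha$ and the value of $a_{c+1}$ enter.

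First I would dispose of (1). Exactly as in Proposition \ref{HomM-Mv}(1), via Lemma \ref{ExtSiMSjMExtS2i-j-1+cMR}, one gets ${}_0\!\Hom(\widetilde M^\vee(2-\mu_1),\widetilde M)\cong S_2M(\mu_1-2)_0$, and the surjection $S_2G=\oplus_{1\le i,j\le 2}R(-b_i-b_j)\twoheadrightarrow S_2M$ reduces the claim to the single inequality $\mu_1-2<2b_2$. Substituting $\mu_1=2-a_{c+1}+2b_1+b_2$ together with the hypothesis $a_{c+1}=1+2b_1-b_2$ gives $\mu_1-2=2b_2-1<2b_2$, so (1) holds.

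For (2) and (3) I would use Lemma \ref{ExtSiMSjMExtS2i-j-1+cMR} to identify $\Ext^i_{\odi{X}}(\widetilde M,\widetilde M^\vee(2-\mu_1))\cong {}_0\!\Ext^{i+c}_R(S_{c+1}M,R(\nu))$ for $i=0,1$ (and the first isomorphism of that lemma), where $\nu=2-\mu_1-\ell=-b_1-\sum_{i=1}^ca_i$; thus (2) becomes the vanishing of ${}_0\!\Ext^c_R(S_{c+1}M,R(\nu))$ and (3) becomes $\dim{}_0\!\Ext^{c+1}_R(S_{c+1}M,R(\nu))\ge 3$. Feeding the minimal resolution $0\to\wedge^{c+1}F\stackrel{d}{\longrightarrow}\wedge^cF\otimes G\to\cdots\to S_{c+1}G\to S_{c+1}M\to 0$ into $\Hom(-,R(\nu))$ produces, just as in Proposition \ref{HomM-Mv}, the two exact sequences centered at $e:={}_0\!\Hom(d,R(\nu))\colon {}_0\!\Hom(\wedge^cF\otimes G,R(\nu))\to {}_0\!\Hom(\wedge^{c+1}F,R(\nu))$, namely ${}_0\!\Ext^{c+1}_R(S_{c+1}M,R(\nu))=\coker(e)$ and ${}_0\!\Hom(\wedge^{c-1}F\otimes S_2G,R(\nu))\to\ker(e)\to {}_0\!\Ext^c_R(S_{c+1}M,R(\nu))\to 0$. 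So (2) follows as soon as $e$ is injective, and (3) is the estimate $\dim\coker(e)\ge 3$. The heart of the matter is then an explicit description of $e$: the target is $R_{a_{c+1}-b_1}=R_{1+b_1-b_2}$, while the source decomposes over $1\le k\le c+1$, $j\in\{1,2\}$ into the one-dimensional pieces $R_{1+b_1-b_2+b_j-a_k}$, a short degree check showing that a piece survives exactly when $a_k=a_{c+1}$ and $b_j=b_1$. Writing $\beta=\#\{j:b_j=b_1\}$ (so $\beta=2$ precisely when $b_1=b_2$, i.e. when $a_{c+1}=1+b_1$, and $\beta=1$ otherwise), the source is $\alpha\beta$-dimensional. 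Since $d$ is induced by $\varphi$, the map $e$ sends the basis vector indexed by $(k,j)$ to $\pm\,\cA_{jk}$, whence $\im(e)$ is the span inside $R_{1+b_1-b_2}$ of the $\alpha\beta$ entries $\cA_{jk}$ with $a_k=a_{c+1}$, $b_j=b_1$.

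The main obstacle, and the only place where generality of $\cA$ and the numerical hypotheses are used, is to prove these $\alpha\beta$ entries are linearly independent and to compare their number with $\dim R_{1+b_1-b_2}$. When $b_1>b_2$ they are $\alpha\le c+1$ general forms of degree $1+b_1-b_2\ge 2$ sitting in a large space, so for general $\cA$ we get $\rk(e)=\alpha$; hence $e$ is injective (giving $\ker(e)=0$ and (2)) and $\dim\coker(e)\ge \binom{n+2}{2}-(c+1)\ge 3$ (note $\depth_{I(Z)}A\ge 3$ forces $\dim X\ge 2$, so $n\ge 3$), giving (3). When $b_1=b_2$ the entries are $2\alpha$ general linear forms, so under the hypothesis $2\alpha\le n-2$ they are independent in $R_1$; again $e$ is injective, which gives (2), and $\dim\coker(e)=(n+1)-2\alpha\ge 3$, which gives (3) and explains precisely why the bound $2\alpha\le n-2$ is imposed in this case. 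I expect the delicate part to be the clean bookkeeping that isolates exactly the surviving entries $\cA_{jk}$ and the verification that generality yields the asserted rank of $e$; once that is in place the two exact sequences deliver (2) and (3) immediately.
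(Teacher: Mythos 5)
Your proposal is correct and follows essentially the same route as the paper: part (1) by the degree count on $S_2M$, and parts (2)--(3) by dualizing the resolution of $S_{c+1}M$, identifying the map $e$ with the sub-collection of entries $\cA_{jk}$ indexed by $a_k=a_{c+1}$, $b_j=b_1$ (so $\alpha\beta$ surviving $R_0$-summands), and invoking generality of $\cA$ to get maximal rank in the two cases $b_1=b_2$ and $b_1>b_2$. The only cosmetic difference is that the paper carries out the identification of $e$ with a submatrix of $\cA$ very explicitly via the pairing $\wedge^iF^*\times\wedge^{c+1-i}F^*\to\wedge^{c+1}F^*$ and chosen bases, and justifies $\binom{n+2}{2}-\alpha\ge 3$ via $\alpha\le c+1\le n-1$ (which you should cite in place of just ``$n\ge 3$'', though it too follows from $\depth_{I(Z)}A\ge 3$).
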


\begin{proof} We follow the proof of Proposition  \ref{HomM-Mv}.

(1) We have $\  _0\!\Hom(M^{\vee }(2-\mu _1),M)=(S_2M)_{\mu _1-2}=0$ because $\mu _1-2-2b_2=-1<0$.

(2) and (3).  The exact sequences involving $_0\! \Ext^{c+i}$ in the proof of Proposition  \ref{HomM-Mv} yields
\begin{equation} \label{map-e} \longrightarrow (\wedge ^cF^*\otimes G^*)_{\nu
  }\stackrel{e }{\longrightarrow} (\wedge ^{c+1}F^*)_{\nu } \longrightarrow \
  _0\!\Ext^{c+1}_R(S_{1+c}M,R(\nu))\longrightarrow 0 \end{equation} and a
surjection $\ker(e) \twoheadrightarrow \ _0\!\Ext^{c}_R(S_{1+c}M,R(\nu))$
where $\nu:=2-\mu _1-\ell=-b_1-\sum_{j=1}^ca_j$, $(\wedge ^{c+1}F^*)_{\nu }\cong R(a_{c+1}-b_1)_0$ and
\begin{equation} \label{wedge-c} (\wedge ^cF^*\otimes G^*)_{\nu } \cong \bigoplus _{1\le s\le c+1 \atop 1\le j \le 2}R(\sum _{i=1}^{c+1} a_i-a_s+b_j+\nu)_0=\bigoplus _{1\le s\le c+1 \atop 1\le j\le 2}R((a_{c+1}-a_s)+(b_j-b_1))_0.
\end{equation}
Since $(a_{c+1}-a_s)+(b_j-b_1)$ is zero if and only if $a_s=a_{c+1}$ and
$b_j=b_1$, we deduce that the number of direct summands equal to $R_0$ in
(\ref{wedge-c}) is exactly $\alpha \beta$ where
$\beta := \# \{j/ b_j=b_{1} \}$.

To see that
$e:(\wedge ^cF^*\otimes G^*)_{\nu }\longrightarrow (\wedge ^{c+1}F^*)_{\nu }$
has maximal rank, we use again the pairing
$$\wedge ^{i}F^*\times \wedge ^{t+c-1-i}F^* \longrightarrow \wedge
^{t+c-1}F^*\cong R(\sum _{i=1}^{t+c-1}a_i)$$
and describe the corresponding map
\begin{equation} \label{delta}
\delta _t:\wedge ^{t-1}F(\sum_{j=1}^{t+c-1}a_j)\otimes G^*({\nu}) \longrightarrow \wedge^{t-2} F(\sum _{j=c+1}^{t+c-1}a_{j}-b_1)
\end{equation}
where the restriction, $\delta _t|_0$, of $\delta _t$ to the degree zero parts
of the free modules is $e$. Here, of course, $t=2$, but to see the description
more clearly, we keep $t$ in (\ref{delta}). Let
${\mathcal A}=[\varphi(y_1),\varphi(y_2),\cdots, \varphi(y_{t+c-1})]$ where
$\{y_1,y_2,\cdots y_{t+c-1}\}$ is the standard basis of $F$. Then $\delta _t$
is given by
$$\delta _t(g^*\otimes y_1\wedge \cdots \wedge y_i \wedge \cdots \wedge y_{t-1})=\sum _{i=1}^{t-1}(-1)^{i-1}g^*(\varphi(y_i))y_1\wedge \cdots \wedge \hat{y_i} \wedge \cdots \wedge y_{t-1}.$$
If $\{g_1, g_2, \cdots ,g_t \}$ is the standard basis of $G$, then
$g_j^*(\varphi(y_i))$ is just the $j$-th coordinate of the column
$\varphi(y_i)$ (see \cite{eise}; Appendix A or the explicit description in
\cite{Ki}). For $t=2$ then $\delta _2(g_j^*\otimes y_i)=g_j^*(\varphi(y_i))$,
and if we use
${\mathcal B}=\{y_1\otimes g_1^*,y_1\otimes g_2^*,y_2\otimes g_1^*,y_2\otimes
g_2^*,\cdots ,y_{c+1}\otimes g_1^*,y_{c+1}\otimes g_2^*\}$
as an $R$-basis for $F(\sum_{j=1}^{c+1}a_j)\otimes G^*$, a matrix of
$\delta _2$ is 
given by the following $1\times (2c+2)$ matrix
\begin{equation}\label{matrix}
{\mathcal A'}:=[\varphi(y_1)^{tr},\varphi(y_2)^{tr},\cdots , \varphi(y_{c+1})^{tr}]
\end{equation}
where $\varphi(y_i)^{tr}$ is the transpose of the column $\varphi(y_i)$,
whence ${\mathcal A'}$ is obtained from the $2 \times (c+1)$ matrix
${\mathcal A}$ by converting each column $\varphi(y_i)$ of ${\mathcal A}$ to
the row $\varphi(y_i)^{tr}=[g_1^*(\varphi(y_i)),g_2^*(\varphi(y_i))]$
consisting of 2 forms of degree $(a_i-b_1,a_i-b_2)$.

Suppose $a_{c+1}\le 1+b_1$. Then $1+2b_1-b_2\le 1+b_1$ implies $b_2=b_1$ and
$a_{c+1}=1+b_1$, and the number of direct summands equal to $R_0$ in
(\ref{wedge-c}) is $2\alpha$ because $\beta=2$. Restricting $\delta_t$ in
\eqref{delta} to degree zero and removing direct ``summands'' $R_{s}$ with
$s<0$ there, we get the map $\delta_t|_0: R_0^{2\alpha } \rightarrow R(1)_0$
whose matrix is the following $1\times 2\alpha$ submatrix of ${\mathcal A'}$:
\begin{equation}\label{matrix2}
[\varphi(y_{c+2-\alpha})^{tr},\cdots, \varphi(y_c)^{tr}, \varphi(y_{c+1})^{tr}]
\end{equation}
consisting entirely of linear forms. Then using
${\mathcal B_1}=\{y_{c+2-\alpha}\otimes g_1^*,y_{c+2-\alpha}\otimes
g_2^*,\cdots ,y_{c+1}\otimes g_1^*,y_{c+1}\otimes g_2^*\}$
and ${\mathcal C}=\{x_0,x_1,\cdots ,x_n\}$ as $K$-basis of
$R_0^{2\alpha }\cong K^{2\alpha }$ and $R_1\cong K^{n+1}$ respectively, the
matrix of
\begin{equation}\label{e}
  e: R_0^{2\alpha }\cong K^{2\alpha } \longrightarrow R_1\cong K^{n+1}
\end{equation}
relative to the basis ${\mathcal B_1}$ and ${\mathcal C}$ is just the
following $(n+1)\times 2\alpha$ matrix of entries in $K$;
\begin{equation}\label{matrix3} [g_1^*(\varphi(y_{c+2-\alpha}))^{\mathcal
    C},g_2^*(\varphi(y_{c+2-\alpha}))^{\mathcal C}, \cdots,
   g_1^*(\varphi(y_{c}))^{\mathcal
    C},g_2^*(\varphi(y_{c}))^{\mathcal C}, g_1^*(\varphi(y_{c+1}))^{\mathcal
    C},g_2^*(\varphi(y_{c+1}))^{\mathcal C}]
\end{equation}
where the column $g_j^*(\varphi(y_{i}))^{\mathcal C}$ is the coordinate vector
of the linear form $g_j^*(\varphi(y_i))$ with respect to ${\mathcal C}$.
Noting that the matrix ${\mathcal A}$, hence also ${\mathcal A'}$ is given by
a random choice of homogeneous polynomials of degree $a_j-b_i$, it is clear
from (\ref{matrix2}) and (\ref{matrix3}) that
the map $e$ considered as a linear map of $K$-vector spaces, is of maximal
rank. This proves what we want because \eqref{e} and the assumption
$n+1-2\alpha \ge 3$ imply
$0=\ker (e)\twoheadrightarrow \ _0\!\Hom(M,M^{\vee }(2-\mu_1))$ as well as
$\dim \coker(e)\ge 3$, and we are done in the case $a_{c+1}\le b_1+1$.

Finally, we suppose $a_{c+1}=b_1+\nu$, $\nu\ge 2$. Since $1+2b_1-b_2=\nu+b_1$
we get $b_1-b_2=\nu -1>0$, e.g. $a_{c+1}-b_2>\nu$ and the number of $R_0$'s in
(\ref{wedge-c}) is only $\alpha $. In this case the map $e$ of (\ref{map-e})
is of the form $e:R_0^{\alpha}\longrightarrow R_{\nu}$, cf. (\ref{e}) and the
description leading to (\ref{matrix2}) shows that the $1 \times \alpha$ matrix
that corresponds to $e$ is
${\mathcal A''}:=[\varphi _1(y_{c+2-\alpha}),\cdots, \varphi_1(y_c), \varphi
_1(y_{c+1})]$
where $\varphi (y_{i})^{tr}=[\varphi _1(y_{i}),\varphi _2(y_{i})]$. If
${\mathcal D}=\{x_0^{\nu},x_0^{\nu-1}x_1,\cdots ,x_n^{\nu}\}$ is a $K$-basis
of $R_{\nu} \cong K^{n+ \nu \choose \nu}$ and
${\mathcal E}:=[\varphi _1(y_{c+2-\alpha})^{\mathcal D},\cdots,
\varphi_1(y_c)^{\mathcal D}, \varphi _1(y_{c+1})^{\mathcal D}]$
the corresponding 
matrix of $e$ with entries in $K$, we see that a general ${\mathcal A}$, which
has ${\mathcal A''}$ as a submatrix, leads to a matrix ${\mathcal E}$ with
randomly chosen entries. Hence ${\mathcal E}$ has maximal rank. Since
$\alpha \le c+1\le n-1$, we get that
${n+ \nu \choose \nu} \ge {n+2 \choose 2} \ge n+2 \ge \alpha + 3$ for any
$\nu \ge 2$, whence $\ker(e)=0$,
$\dim \coker(e) = {n+ \nu \choose \nu} -\alpha \ge 3$ and we are done.
\end{proof}

We can also treat another case not covered by Corollary \ref{ge3} (i.e. $t=3$ and $a_c>a_{c+1}=1+b_1$) by the ideas in the proof of Proposition \ref{t=2}. Note that if $t=3$, $a_{c+1}>b_1+1$ and $a_{c+2}>b_1$, then $b_3\ge b_2-1$ implies the inequality $\sum _{j=c+1}^{t+c-1}a_j>1+\sum_{i=1}^{t-1}b_i+b_1-b_t$ and Corollary \ref{ge3} applies. For $t=3$ and $a_{c+1}\le b_1+1$ (i.e. $a_{c+1}=b_1+1$ provided $a_{c+2}>b_1$) we have

\begin{proposition} \label{t=3} Let $t=3$, $\mu _1=3-a_{c+1}-a_{c+2}+2b_1+b_2+b_3$ and assume $\depth _{I(Z)}A\ge 3$, $a_c>a_{c+1}=b_1+1$, $a_{c+2}>b_1$  and $b_2-1\le b_3$.
 Then, it holds:
\begin{itemize}
\item[(1)] $\  _0\!\Hom(M^{\vee }(3-\mu _1),M)=0$;
\item[(2)] $\  _0\!\Hom(M,M^{\vee }(3-\mu_1))=0$ and $\  \dim  \,  _0\! \Ext^1_{A}(M,M^{\vee }(3-\mu_1))\ge 3$.
\end{itemize}
\end{proposition}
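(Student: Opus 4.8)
The plan is to follow the proofs of Propositions \ref{HomM-Mv} and \ref{t=2} in structure, reducing both parts to the analysis of one map $e$ between graded pieces of exterior powers of $F$ and $F^*$, and then to exploit how rigid the numerical hypotheses are here. The first thing I would record is that the standing ordering $a_{c+2}\le a_{c+1}$ together with $a_{c+1}=b_1+1$ and $a_{c+2}>b_1$ forces $a_{c+2}=b_1+1$, while $a_c>a_{c+1}=b_1+1$ gives $a_k\ge b_1+2$ for every $k\le c$. This rigidity is exactly what makes the two relevant graded pieces collapse.

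For (1), as in Proposition \ref{HomM-Mv}(1) one has $\  _0\!\Hom(M^{\vee}(3-\mu_1),M)\cong(S_2M)_{\mu_1-3}$. Since $S_2G=\bigoplus_{i\le j}R(-b_i-b_j)$ surjects onto $S_2M$ and its lowest-degree generator sits in degree $2b_3=2b_t$, it is enough to check $\mu_1-3<2b_3$. Substituting $\mu_1$ and $a_{c+1}=a_{c+2}=b_1+1$ gives $\mu_1-3-2b_3=-2+b_2-b_3$, which is $\le-1$ by the hypothesis $b_2-1\le b_3$; hence $(S_2M)_{\mu_1-3}=0$.

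For (2), Lemma \ref{ExtSiMSjMExtS2i-j-1+cMR} (with $j=1$ and twist $3-\mu_1$, legitimate since $\depth_{I(Z)}A\ge3$) identifies $\  _0\!\Ext^i_A(M,M^{\vee}(3-\mu_1))$ with $\  _0\!\Ext^{i+c}_R(S_{1+c}M,R(\nu))$, where $\nu:=3-\mu_1-\ell=-b_1-\sum_{i=1}^c a_i$ — the same $\nu$ as in Proposition \ref{t=2}, which is the whole point of passing from $\mu$ to $\mu_1=\mu-1$. Feeding the resolution of $S_{1+c}M$ in gives, exactly as in Proposition \ref{t=2}, a map $e\colon(\wedge^cF^*\otimes G^*)_{\nu}\to(\wedge^{c+1}F^*)_{\nu}$ with $\coker(e)\cong\  _0\!\Ext^1_A(M,M^{\vee}(3-\mu_1))$ and a surjection $\ker(e)\twoheadrightarrow\  _0\!\Hom(M,M^{\vee}(3-\mu_1))$. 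The key bookkeeping is the degree count: a summand of $\wedge^cF^*\otimes G^*$ with omitted indices $p<q$ and factor $g_j^*$ sits in degree $b_1+2-a_p-a_q+b_j$, and a summand of $\wedge^{c+1}F^*$ with omitted index $k$ sits in degree $b_1+2-a_k$. With $a_{c+1}=a_{c+2}=b_1+1$ and $a_k\ge b_1+2$ for $k\le c$, only the pair $\{c+1,c+2\}$ survives in the source, so $(\wedge^cF^*\otimes G^*)_{\nu}\cong K^{\beta}$ with $\beta:=\#\{j:b_j=b_1\}\le3$, while $k=c+1,c+2$ each give a copy of $R_1\cong K^{n+1}$ in the target, so $\dim(\wedge^{c+1}F^*)_{\nu}\ge2(n+1)$.

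Finally I would run the $\delta_t$-description of $e$ from Proposition \ref{t=2} with $t=3$: here $e=\delta_3|_0$ sends $g_j^*\otimes y_{c+1}\wedge y_{c+2}$ to $g_j^*(\varphi(y_{c+1}))\,y_{c+2}-g_j^*(\varphi(y_{c+2}))\,y_{c+1}$, and because $a_{c+1}-b_1=a_{c+2}-b_1=1$ its nonzero components are the linear entries of $\cA$ landing in the two $R_1$-summands. For a general $\cA$ the at most three linear forms $g_j^*(\varphi(y_{c+1}))$ (over the $j$ with $b_j=b_1$) are $K$-independent, so $e$ is injective; then $\ker(e)=0$ yields $\  _0\!\Hom(M,M^{\vee}(3-\mu_1))=0$, and $\dim\coker(e)\ge2(n+1)-\beta\ge2(n+1)-3\ge3$ (as $n\ge c+1\ge2$) gives the $\Ext^1$ bound. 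I expect the only genuinely delicate points to be this degree bookkeeping and the generic-rank verification for $e$; once $a_{c+2}=b_1+1$ has been extracted from the hypotheses the rest is formal, and indeed the cokernel bound is easy precisely because the target is far larger than the source.
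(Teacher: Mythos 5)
Your proposal is correct and follows essentially the same route as the paper: part (1) by the degree bound $\mu_1-3-2b_3=-2+b_2-b_3<0$, and part (2) by identifying $\Ext^{i}$ with $\  _0\!\Ext^{i+c}_R(S_{1+c}M,R(\nu))$ for $\nu=-b_1-\sum_{j=1}^ca_j$, observing that only the pair $\{c+1,c+2\}$ contributes to the source (forcing $a_{c+2}=b_1+1$) while the target contains $R_1^{\oplus 2}$, and checking that $e=\delta_3|_0$ is injective for a general matrix. Your explicit extraction of $a_{c+2}=b_1+1$ and the independence argument for the $\le 3$ linear forms are just slightly more explicit versions of the paper's ``general entries give maximal rank'' step, and your sign convention for $\delta_3$ agrees with the paper's general formula for $\delta_t$.
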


\begin{proof}
(1) As in the proof of Proposition \ref{t=2} we have $\  _0\!\Hom(M^{\vee }(3-\mu _1),M)=(S_2M)_{\mu _1-3}=0$ because $\mu _1-3-2b_3=-2+b_2-b_3<0$.

(2) Using the exact sequence (\ref{map-e}) with $\nu = 3-\mu _1-\ell=-b_1-\sum _{j=1}^ca_j$ and the pairing $\wedge ^{i}F^*\otimes \wedge  ^{c+2-i}F^*\longrightarrow \wedge ^{c+2}F^*\cong R(\sum _{j=1}^{c+2}a_j)$, the morphism $e$ of (\ref{map-e}) becomes
$$\begin{array}{rcl}
(\wedge ^cF^*\otimes G^*)_{\nu} & \cong & \oplus _{1\le i \le 3 \atop 1\le j_1<j_2 \le c+2}R(a_{c+1}+a_{c+2}-a_{j_1}-a_{j_2}+b_i-b_1)_0\cong \oplus _{1\le
i \le 3}R(b_i-b_1)_0 \\
\downarrow e  \ \ \ \ \ \ \  & & \\
(\wedge ^{c+1}F^*)_{\nu} & \cong & \oplus _{1\le j\le c+2}R(a_{c+1}+a_{c+2}-a_{j}-b_1)_0\cong \oplus _{1\le j\le c}R(b_1+2-a_j)_0\oplus R(1)_0^{\oplus 2}
\end{array}
$$
where $e$, with notations as in the proof of Proposition \ref{t=2}, is
determined by $\delta _3$ restricted to $\oplus _{1 \le i \le 3 }R(b_i-b_1)$.
This restriction is given by
\begin{equation} \label{del3} \delta _3(g_i^*\otimes y_{c+1}\wedge
  y_{c+2})=g_i^*(\varphi (y_{c+2}))y_{c+2}-g_i^*(\varphi (y_{c+1}))y_{c+1} \ ,
  \quad 1 \le i \le 3 \ .
\end{equation}
Indeed, every
$(j_1,j_2)\ne (c+1,c+2)$ of the index set $1\le j_1<j_2\le c+2$ corresponds to
direct ``summands'' of the form $R(s+b_i-b_1)_0=0$ for some $s<0$, and we may
take
$${\mathcal B}:=\{g_1^*\otimes y_{c+1}\wedge y_{c+2},g_2^*\otimes
y_{c+1}\wedge y_{c+2},g_3^*\otimes y_{c+1}\wedge y_{c+2}\}$$
as an $R$-basis of $\oplus _{1 \le i \le 3}R(b_i-b_1)$, as well as a $K$-basis
of $\oplus _{1 \le i \le 3}R(b_i-b_1)_0$ (if $b_i-b_1<0$, we will later skip
the ``summand'' $R(b_i-b_1)_0$ and shrink the $K$-basis ${\mathcal B}$
correspondingly and call it ${\mathcal B'}$). Then a $(c+2)\times 3$ matrix
$L({\mathcal A})$ of $\delta _3$ restricted to $\oplus _{i}R(b_i-b_1)$,
relative to the $R$-basis ${\mathcal B}$ and
${\mathcal C}:=\{y_1,y_2,\cdots ,y_{c+2} \}$, is by \eqref{del3} zero
everywhere in the first $c$ rows while the two lower rows are just the
transpose of the two last columns of ${\mathcal A}$ (up to sign). Skipping
columns of $L({\mathcal A})$ corresponding to $R(b_i-b_1)_0$ with $b_i<b_1$,
we get a matrix whose two lower rows consists of linear forms. Replacing the
$R$-basis ${\mathcal C}$ of length $c+2$ by the $K$-basis
${\mathcal C'} = \{y_1,\cdots,y_c, x_py_{c+1},x_qy_{c+2}\}$,
$1\le p,q \le n+1$ of length $m:=c+2(n+1)$, we get a $m\times i$ matrix for
some $i$, $1\le i \le 3$, that represents $e=\delta _3|_0$ as a $K$-linear
map, $e:K^{i}\longrightarrow K^{m}$, relative to the basis ${\mathcal B'}$ and
${\mathcal C'}$. 
A general choice of ${\mathcal A}$ will, however, lead to
a matrix $L({\mathcal A})$ whose entries of degree 1 are general. It follows
that $e$ has maximal rank, whence $e$ is injective and
$$\dim \coker (e) =\  \dim  \,  _0\! \Ext^1_{A}(M,M^{\vee }(3-\mu_1))\ge2(n+1)-i\ge  3$$
for every $n\ge 2$ and $i\le 3$ and we are done.
\end{proof}

We now outline the approach that will allow us to construct huge families of ACM sheaves (resp. Ulrich sheaves)  on standard determinantal varieties (resp. linear standard determinantal varieties).

\begin{construction}\label{construction1} \rm We keep the above notation and assuming $\depth _{I(Z)}A\ge 3$ we set $s:=\dim \Ext^1_{\odi{X}}(\widetilde{M},\widetilde{M}^{\vee }(t-\mu))$ (and replace $\mu $ by $\mu _1$ at places where we use Proposition \ref{t=2} or Proposition \ref{t=3}). We assume $s>0$ and we take  $0\ne e\in \Ext^1_{\odi{X}}(\widetilde{M},\widetilde{M}^{\vee }(t-\mu))$.
Using $e$ we construct a rank $2$ sheaf $\shE$ on $X$ sitting into the exact sequence:
\begin{equation}\label{extension11}
0\longrightarrow \widetilde{M}^{\vee }(t-\mu) \longrightarrow \shE \longrightarrow \widetilde{M} \longrightarrow 0.\end{equation}   $\shE $ is an ACM sheaf and arguing as in \cite{CMP}; Theorem 4.4, we prove that $\shE $ is simple and indecomposable.
Let $\shF $ be the irreducible family of rank $2$ indecomposable ACM  sheaves on $X$ given by the extension (\ref{extension11}). By construction $\shF \cong \PP(\Ext^1(\widetilde{M},\widetilde{M}^{\vee }(t-\mu)))$ and $\dim \shF = s-1.$
\end{construction}

\begin{proposition}\label{ext1E1E2} We keep the above notation and we consider $\shE _{i}$ the rank 2 ACM sheaves given by non-splitting extensions
\begin{equation}\label{extension2}  e_{i}: \ \
  0\longrightarrow \widetilde{M}^{\vee }(t-\mu) \longrightarrow \shE_{i} \longrightarrow \widetilde{M} \longrightarrow 0
  \ \ \text{ for } \ i=1,2.\end{equation} Assume that $\depth _{I(Z)}A\ge 3$. Then, it holds:
$$\ext^1(\shE _1,\shE _2)\ge \ext^1(\widetilde{M},\widetilde{M}^{\vee }(t-\mu)
)-2.$$
\end{proposition}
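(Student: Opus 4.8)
The plan is to compute the relevant $\Ext^1$ group by feeding the two defining extensions $e_1$ and $e_2$ into the long exact sequences of $\Hom$, reducing everything to the ``building blocks'' $\widetilde{M}$ and $\widetilde{M}^{\vee}(t-\mu)$. Write for brevity $L:=\widetilde{M}$ and $N:=\widetilde{M}^{\vee}(t-\mu)$, so that $e_i\colon 0\to N\to \shE_i\to L\to 0$. The inputs I would use repeatedly are the vanishings $\Hom_{\odi{X}}(L,N)=\Hom_{\odi{X}}(N,L)=0$ supplied by Proposition \ref{HomM-Mv} (parts (1) and (2)), the equality $\ext^1(L,N)=\ext^1(\widetilde{M},\widetilde{M}^{\vee}(t-\mu))$, and the fact that $L$ and $N$ are simple, i.e. $\ho(L,L)=\ho(N,N)=1$. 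The latter holds because $\widetilde{M}$ and $\widetilde{M}^{\vee}$ are reflexive of rank one on the integral scheme $X$; concretely one reads it off from the comparison isomorphism \eqref{NM} with $r=2$, which under $\depth_{I(Z)}A\ge 3$ identifies $\Hom_{\odi{X}}(L,L)$ with ${}_0\Hom_A(M,M)=K$, and likewise for $N$.

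First I would apply $\Hom_{\odi{X}}(L,-)$ to $e_2$. Since $\Hom_{\odi{X}}(L,N)=0$, the connecting homomorphism $\partial'\colon \Hom_{\odi{X}}(L,L)\to \Ext^1_{\odi{X}}(L,N)$ is defined on the one-dimensional space $K\cdot\id_L$ and sends $\id_L$ to the class of $e_2$, which is nonzero because $e_2$ does not split. Hence $\partial'$ is injective with one-dimensional image, so the image of $\Ext^1_{\odi{X}}(L,N)$ inside $\Ext^1_{\odi{X}}(L,\shE_2)$ has dimension at least $\ext^1(L,N)-1$, giving
\[
\ext^1(L,\shE_2)\ \ge\ \ext^1(L,N)-1.
\]
Next I would apply $\Hom_{\odi{X}}(N,-)$ to $e_2$. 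Using $\Hom_{\odi{X}}(N,L)=0$ the sequence collapses to $0\to \Hom_{\odi{X}}(N,N)\to \Hom_{\odi{X}}(N,\shE_2)\to 0$, whence $\ho(N,\shE_2)=\ho(N,N)=1$.

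Finally I would apply the contravariant functor $\Hom_{\odi{X}}(-,\shE_2)$ to $e_1$, reading off the exact piece
\[
\Hom_{\odi{X}}(N,\shE_2)\stackrel{\partial}{\longrightarrow}\Ext^1_{\odi{X}}(L,\shE_2)\longrightarrow \Ext^1_{\odi{X}}(\shE_1,\shE_2).
\]
The kernel of the right-hand map is $\im(\partial)$, of dimension at most $\ho(N,\shE_2)=1$, so combining with the bound of the previous paragraph yields
\[
\ext^1(\shE_1,\shE_2)\ \ge\ \ext^1(L,\shE_2)-1\ \ge\ \ext^1(L,N)-2\ =\ \ext^1(\widetilde{M},\widetilde{M}^{\vee}(t-\mu))-2,
\]
which is exactly the claim. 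Note that I never need $\Ext^2$: the argument only ever extracts dimensions of images of $\Ext^1$ groups, so the three long exact sequences chain together purely formally.

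I expect the only delicate points to be the simplicity inputs $\ho(L,L)=\ho(N,N)=1$ and the non-triviality of $\partial'(\id_L)=[e_2]$. The simplicity rests on $X$ being integral (so that the rank-one reflexive sheaves $\widetilde{M},\widetilde{M}^{\vee}$ have endomorphism ring $K$), which is available under the standing generality and depth hypotheses via \eqref{NM}; the non-triviality is precisely the assumption that $e_2$ is a non-splitting extension. Everything else is a routine chase, and in particular the numerical hypotheses enter only through the already-established vanishings of $\Hom_{\odi{X}}(L,N)$ and $\Hom_{\odi{X}}(N,L)$.
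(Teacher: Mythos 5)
Your proof is correct and follows essentially the same route as the paper's: three chained long exact sequences driven by the vanishings $\Hom(\widetilde{M},\widetilde{M}^{\vee}(t-\mu))=\Hom(\widetilde{M}^{\vee}(t-\mu),\widetilde{M})=0$, the one-dimensionality of the endomorphisms of the two rank-one sheaves, and the non-splitting of the given extensions. The only difference is cosmetic mirror-imaging: you resolve $\shE_2$ covariantly first and then apply $\Hom(-,\shE_2)$ to $e_1$, whereas the paper resolves $\shE_1$ contravariantly first (using in addition $\Ext^1(\widetilde{M}^{\vee}(t-\mu),\widetilde{M}^{\vee}(t-\mu))=0$ to get an equality where you content yourself with an inequality) and then applies $\Hom(\shE_1,-)$ to $e_2$.
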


\begin{proof} We have $\depth _{I(Z)}A\ge 3$, hence $\Ext^1(\widetilde{M}^{\vee }(t-\mu),\widetilde{M}^{\vee }(t-\mu))\cong \Hl^1(U, \shH om(\widetilde{M}^{\vee },\widetilde{M}^{\vee }))=0$. Thus, applying $\Hom(-,\widetilde{M}^{\vee }(t-\mu))$ to  (\ref{extension2}), we get  for $i=1,2$ the exact sequences:

$$0\longrightarrow  \Hom(\widetilde{M},\widetilde{M}^{\vee }(t-\mu))  \longrightarrow  \Hom(\shE _{i},\widetilde{M}^{\vee }(t-\mu))  \longrightarrow $$

$$\begin{array}{cccccccc}
\rightarrow & \Hom(\widetilde{M}^{\vee }(t-\mu),\widetilde{M}^{\vee
              }(t-\mu))\cong K & \rightarrow &
                                               \Ext^1(\widetilde{M},\widetilde{M}^{\vee }(t-\mu)) & \rightarrow & \Ext^1(\shE _{i},\widetilde{M}^{\vee }(t-\mu))\rightarrow 0 .
\\    &
1 & \mapsto & e_{i} \end{array}$$
Using them together with Proposition \ref{HomM-Mv}(2), it follows that \begin{equation}\label{auxiliar} 0=\Hom(\widetilde{M},\widetilde{M}^{\vee }(t-\mu))\cong \Hom(\shE _{i},\widetilde{M}^{\vee }(t-\mu)), \ \text{ for } \ i=1,2; \text{ and} \end{equation} \begin{equation}\label{aux2}\ext^1(\shE _{i},\widetilde{M}^{\vee }(t-\mu)) = \ext^1(\widetilde{M},\widetilde{M}^{\vee }(t-\mu))-1 \ \text{ for } \ i=1,2.\end{equation}
Applying the functor $\Hom(-,\widetilde{M})$ to  (\ref{extension2})  with $i=1$ and using again Proposition \ref{HomM-Mv}, we get:
\begin{equation}\label{auxfin}
\Hom(\shE _{1},\widetilde{M})\cong \Hom(\widetilde{M},\widetilde{M})\cong K. \end{equation}
Now, we apply the functor $\Hom(\shE _1,-)$ to the exact sequence (\ref{extension2})
and we get the exact sequence:
{\small $$0\longrightarrow \Hom(\shE _1,\widetilde{M}^{\vee }(t-\mu))  \longrightarrow  \Hom(\shE _{1},\shE _2)  \longrightarrow
\Hom(\shE _1,\widetilde{M}) \longrightarrow  \Ext^1(\shE _1,\widetilde{M}^{\vee }(t-\mu))
 \longrightarrow  \Ext^1(\shE _{1},\shE _2) .$$}
Thus, using (\ref{auxiliar})-(\ref{auxfin}), we obtain
$$\begin{array}{rcl} \ext^1(\shE _{1},\shE _2)
& \ge & -\hom(\shE _1,\widetilde{M}^{\vee }(t-\mu))+\hom(\shE _{1},\shE _2)-\hom(\shE _1,\widetilde{M})+\ext^1(\shE _1,\widetilde{M}^{\vee }(t-\mu))
\\& \ge  & \ext^1(\widetilde{M},\widetilde{M}^{\vee }(t-\mu))-2.
\end{array}$$
\end{proof}

Finally, we will construct families of indecomposable ACM sheaves on a  standard determinantal scheme of arbitrarily high rank and dimension.
We will base our proof on the existence of low rank ACM sheaves and on the existence of non trivial extensions.
Indeed, recalling that $t>1$, we have

\begin{theorem} \label{Bigthm} Let $X\subset \PP^n$ be a  general standard determinantal scheme of codimension $c\ge 1$.
Assume that  $\dim X\ge 2$, $\sum _{j=c+1}^{t+c-1}a_j> 1+\sum_{i=1}^{t-1}b_i+b_1-b_t$ and that $a_{c+i}\ge b_i$ (resp. $ a_{c+i}>b_i$), $1\le i\le t-1$, for $c\ge 2$ (resp. $c=1$). If $t=3$ (resp. $t\ge 4$) and $a_{c+1}\le 1+b_1$ we also assume $a_{c+2}>b_1$ (resp. $a_{c+3}>b_1$).
  Then, $X$ has  wild representation type. This conclusion also holds if \ $t=2$ and $a_{c+1} = 1+2b_1-b_2$
  provided we in the case $a_{c+1}=1+b_1$ assume $2\alpha \le n-2$ where
  $\alpha := \# \{i \in \{1,c+1\}/ a_i=a_{c+1} \}$.
\end{theorem}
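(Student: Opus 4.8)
The plan is to prove wild representation type by constructing, for each $r\geq 1$, an $r$-dimensional (or larger) family of pairwise non-isomorphic indecomposable ACM sheaves on $X$. The building blocks are already in place: Construction \ref{construction1} produces rank $2$ indecomposable ACM sheaves $\shE$ fitting in the extension \eqref{extension11}, and the numerical hypotheses of the theorem are precisely those that guarantee, via Corollary \ref{ge3}, Proposition \ref{t=2} and Proposition \ref{t=3}, that the relevant Ext group has dimension $s:=\ext^1(\widetilde{M},\widetilde{M}^{\vee}(t-\mu))\geq 3$ while the two Hom groups in Proposition \ref{HomM-Mv}(1)--(2) vanish. (In the $t=2$ and $t=3$ borderline cases one replaces $\mu$ by $\mu_1$ and invokes the corresponding proposition, which is why the statement splits into those cases.) The vanishing of the Homs is what forces the rank $2$ extensions to be indecomposable and simple, and $s\geq 3$ is the key inequality that makes the iterated construction grow.

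First I would make precise the iteration. Given rank $2$ sheaves $\shE_1,\dots,\shE_k$ from non-split extensions as in \eqref{extension2}, I would form iterated extensions to build higher rank ACM sheaves $\shG$ sitting in short exact sequences $0\to\shE_i\to\shG\to\shE'\to 0$ where $\shE'$ is a previously constructed sheaf of rank $2(k-1)$. The engine is Proposition \ref{ext1E1E2}, which gives $\ext^1(\shE_1,\shE_2)\geq s-2\geq 1$; this strict positivity is exactly what allows non-trivial gluing at each stage. I would set up the family so that a rank $2r$ sheaf is assembled from $r$ rank $2$ pieces, each piece carrying its own extension-class parameter from $\PP(\Ext^1(\widetilde M,\widetilde M^{\vee}(t-\mu)))\cong\PP^{s-1}$, together with the gluing parameters governed by the $\ext^1(\shE_i,\shE_j)$ groups. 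The resulting family then has dimension growing at least linearly in $r$, which yields arbitrarily large $l$-dimensional families as required by Definition \ref{FineTameWild}(iii).

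The two properties I must verify throughout the iteration are \emph{indecomposability} and \emph{pairwise non-isomorphism} of the members of the family; these are the heart of the argument and I expect them to be the main obstacle. For indecomposability the standard route, modeled on \cite{CMP}; Theorem 4.4, is to compute the endomorphism ring and show it is local (or that the sheaf is simple, $\End=K$). I would prove that any endomorphism of the iterated extension is, up to a scalar, determined by its action on the filtration quotients, using the vanishing $\Hom(\widetilde M,\widetilde M^{\vee}(t-\mu))=0$ and $\Hom(\widetilde M^{\vee}(t-\mu),\widetilde M)=0$ to kill the off-diagonal contributions and $\Hom(\widetilde M,\widetilde M)\cong\Hom(\widetilde M^{\vee},\widetilde M^{\vee})\cong K$ (which follows since $\widetilde M$ is a line bundle on $U$ and $\depth_{I(Z)}A\geq 3$) to pin the diagonal to scalars; non-splitting of the chosen extension classes then prevents any nontrivial idempotent, giving a local endomorphism ring.

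For non-isomorphism I would parametrize by the extension classes and show that distinct parameter points give non-isomorphic sheaves, so that the constructed family genuinely has the claimed dimension and consists of non-isomorphic objects; this is where the inequality $s\geq 3$ (rather than merely $s>0$) is needed, since it guarantees enough moduli to defeat the finitely many identifications coming from automorphisms of the quotients and leaves a positive-dimensional family of genuinely distinct indecomposables at every rank. Finally I would note that $\shE$ is ACM because it is an extension of ACM sheaves and ACMness is preserved under extension (no intermediate cohomology is created), and that the depth hypothesis $\depth_{I(Z)}A\geq 3$ needed in the lemmas is supplied for a general $\cA$ by Remark \ref{pamsrem}(i) under the stated assumptions $a_{c+i}\geq b_i$ (resp. $>$). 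The numerical case distinctions in the statement are then exactly bookkeeping to ensure that one of Corollary \ref{ge3}, Proposition \ref{t=2}, or Proposition \ref{t=3} applies and delivers $s\geq 3$.
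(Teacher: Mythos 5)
Your proposal is correct and follows essentially the same route as the paper: rank-$2$ building blocks from Construction \ref{construction1} with $s=\ext^1(\widetilde M,\widetilde M^{\vee}(t-\mu))\ge 3$ supplied by Corollary \ref{ge3} (or Propositions \ref{t=2}, \ref{t=3} with $\mu_1$ in the borderline cases), $\depth_{I(Z)}A\ge 3$ from Remark \ref{pamsrem}(i), Proposition \ref{ext1E1E2} as the gluing engine, and a dimension count growing linearly in the rank. The only cosmetic difference is that the paper builds the rank-$2p$ sheaf in one step as an extension $0\to\oplus_{i=1}^{p-1}\shE_i\to\shE\to\shE_p\to 0$ of pairwise non-isomorphic rank-$2$ pieces and delegates simplicity to \cite{PT}; Proposition 5.1.3, whereas you sketch a successive tower and prove locality of the endomorphism ring by hand --- both are instances of the same Hom-vanishing argument.
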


\begin{remark} \rm (1) According to Definition 2 and Corollary 1 in \cite{FPL}  we have proved a slightly stronger result. In fact, we  have proved that  with the above assumptions $X$ is strictly wild. We thank the referee for pointing out this  improvement.

(2) The wildness of $X$ also follows from the main result of \cite{FPL}. The result was obtained independently and, in our case, we also get a lower bound for the dimension $f(r)$ of the families $\shH _r$ of rank $r$ indecomposable, ACM sheaves that we build.
Indeed, it follows from the proof of Theorem \ref{Bigthm} and Propositions \ref{HomM-Mv}(3), \ref{t=2} and  \ref{t=3}, that $$\dim \shH _r\ge 3-2r+(r-1)\dim ( \bigoplus_{1\le s_1<\cdots <s_{c+1}\le t+c-1 }R((\sum _{i=1}^{c+1}a_{s_i})-b_1-1-\sum_{j=1}^ca_j)_0).$$

(3) The numerical assumptions in Theorem \ref{Bigthm}, see Corollary \ref{aGreaterbCase} for more natural assumptions,  summarize the requirements that allow us to construct huge families of MCM modules as iterated extensions.  We guess that these assumptions are not necessary and the following should be true: Let $X\subset \PP^n$ be a  general standard determinantal scheme of codimension $c\ge 1$ and dimension $n-c\ge 2$. Assume $t\ge 2$. Then, $X$ is of strictly wild representation type unless $X$  is a cubic scroll or a quartic scroll.
\end{remark}

\begin{proof}
It is enough to prove that  for any integer $q>0$,
   there exists an integer $r>q$ and a  family $\shH_r$ of non-isomorphic indecomposable ACM  sheaves $\shE$ on $X$ of rank $r>q$ and $\dim \shH_r \ge \frac{r}{2}(s-1)$ being $s:=\Ext^1_{\odi{X}}(\widetilde{M},\widetilde{M}^{\vee }(t-\mu))$ and $\mu :=t+1-\sum _{j=c+1}^{t+c-1}a_j+\sum_{i=1}^{t-1}b_i+b_1+b_t$. The hypothesis $\dim X\ge 2$ and Remark \ref{pamsrem}(i) imply $\depth _{I(Z)}A\ge 3$. Therefore all the
     hypothesis of Proposition  \ref{HomM-Mv} are satisfied  and we have $\dim \Ext^1_{\odi{X}}(\widetilde{M},\widetilde{M}^{\vee }(t-\mu))\ge 3$ (see Corollary \ref{ge3}). For the final conclusion and the case $t=3$, $a_c>a_{c+1}$
     we use Propositions \ref{t=2} and \ref{t=3} and we replace $\mu$ by $\mu_1$.

Given $q>0$, we choose an integer $p$ such that $2p>q$ and we will construct, using iterated extension, a simple (hence indecomposable) ACM sheaf $\shE$ on $X$ of rank $r:=2p$.
Notice that as an immediate consequence of  Proposition 5.1.3 of
\cite{PT}
we have $\Hom(\shE,\shE')=\Hom(\shE',\shE)=0$  for any two non-isomorphic rank 2 simple ACM sheaves $\shE$ and $\shE'$ of the family $\shF $ built in Construction
\ref{construction1}.
On the other hand, by Proposition \ref{HomM-Mv}(3),
the set of non-isomorphic classes of rank 2 ACM sheaves in $\shF$ has dimension $ s-1\ge 2$.

Set $\PP^{s-1} := \PP(\Ext^1 _{\odi{X}}(\widetilde{M},\widetilde{M}^{\vee }(t-\mu)))\cong \shF$ and
 denote by $U\subset \PP^{s-1} \times\stackrel{p)}{\cdots} \times \PP^{s-1}
 $ the non-empty open dense subset parameterizing closed points $[\shE_1, \cdots, \shE_p] \in \PP^{s-1} \times\stackrel{p)}{\cdots} \times \PP^{s-1}$ such that $\shE_i \ncong \shE_j$ for $i \neq j$.  Given $[\shE_1, \cdots, \shE_p] \in U$,  the set of sheaves $\shE_1, \cdots, \shE_p$ satisfy the hypothesis of \cite{PT}; Proposition 5.1.3 and therefore, there exists a family of rank $r$ simple sheaves $\shE$
on $X$ parameterized by
$$
\PP(\Ext^1(\shE_{p},\shE_1)) \times \cdots \times \PP(\Ext^1(\shE_{p},\shE_{p-1}))
$$
\noindent and given as extensions of the following type
$$
0\arr\oplus_{i=1}^{p-1}\shE_i\arr\shE\arr\shE_p\arr 0.
$$

Since $\shE_{i}$ are ACM sheaves, $\shE$ is also an ACM sheaf and we have
 constructed a family $\shH _r$ of non-isomorphic rank $r$ simple ACM sheaves $\shE$ on $X$ parameterized  by a projective bundle  over $U$ and its  dimension is given by the following formula
$$ \dim \shH _r  =
(p-1)\di(\PP(\Ext^1(\shE_{p},\shE_1))) + \dim U .
$$

By Proposition \ref{ext1E1E2}
  $\dim \Ext^1(\shE_{p},\shE_1)\ge s-2>0$. Hence, summing up, we conclude that
$$ \dim \shH _r  \ge
(p-1)(s-3) + p(s-1)\ge \frac{r}{2}(s-1)$$
which proves what we want.
\end{proof}

\begin{remark} \label{remBigthm} \rm The assumption of {\sf generality} of the
  determinantal scheme $X$ is not needed in the proofs of Lemma
  \ref{ExtSiMSjMExtS2i-j-1+cMR}, Proposition \ref{HomM-Mv} and Corollary
  \ref{ge3}, and since we can avoid using Remark \ref{pamsrem} (i) and (ii) by
  including a stronger assumption on $\dim X$, we get a variation of Theorem
  \ref{Bigthm} valid for every standard determinantal scheme. Recalling that
  $Z=V(I_{t-1}(\cA))$ and $\depth _{I(Z)}A = \dim A - \dim R/I(Z)$, the proof
  of Theorem \ref{Bigthm} above applies and we get the following result:

  Let $X\subset \PP^n$ be a standard determinantal scheme of codimension
  $c\ge 1$ and suppose $\dim X \ge 2 + \dim R/I(Z)$ and
  $\sum _{j=c+1}^{t+c-1}a_j> 1+\sum_{i=1}^{t-1}b_i+b_1-b_t$. Moreover if $t=3$
  (resp. $t\ge 4$) and $a_{c+1}\le 1+b_1$ we also assume $a_{c+2}>b_1$ and
  $a_{c}=a_{c+1}$ (resp. $a_{c+3}>b_1$). Then, $X$ has wild representation
  type.
\end{remark}

\begin{corollary}\label{linear case} Let $X\subset \PP^n$ be a general linear standard determinantal scheme of codimension $c\ge 1$ defined by the maximal minors of a $t\times (t+c-1)$ matrix. Assume $n-c\ge 2$ and $t>2$. Then, $X$ is of wild representation type.
\end{corollary}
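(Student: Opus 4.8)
The plan is to deduce this directly from Theorem \ref{Bigthm} by specializing the degree matrix to the linear situation, that is, by setting $a_i=1$ for all $1\le i\le t+c-1$ and $b_j=0$ for all $1\le j\le t$. With these values every numerical hypothesis of Theorem \ref{Bigthm} reduces to an elementary inequality in $t$, $c$ and $n$, so the corollary becomes a bookkeeping verification rather than a new argument. In particular the relevant $\Ext$ computation has already been carried out in Corollary \ref{corlin}: here $\mu=t+1-(t-1)+0+0=2$, and $\dim\Ext^1_{\odi{X}}(\widetilde{M},\widetilde{M}^{\vee}(t-2))\cong(\wedge^{c+1}F^{*})(-1-c)_0=\binom{t+c-1}{c+1}\ge 3$, so the quantity $s$ feeding Construction \ref{construction1} is positive and at least $3$.

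First I would translate the dimension hypothesis: since $\dim X=n-c$, the requirement $\dim X\ge 2$ is exactly the assumption $n-c\ge 2$. Next I would evaluate the central inequality $\sum_{j=c+1}^{t+c-1}a_j>1+\sum_{i=1}^{t-1}b_i+b_1-b_t$. The left-hand sum runs over $t-1$ indices, each contributing $a_j=1$, hence equals $t-1$, while the right-hand side collapses to $1$ because all $b_i$ vanish; thus the inequality reads $t-1>1$, i.e. precisely the hypothesis $t>2$. The depth/codimension conditions $a_{c+i}\ge b_i$ (and $a_{c+i}>b_i$ when $c=1$) for $1\le i\le t-1$ hold trivially, since they become $1\ge 0$ (resp.\ $1>0$); combined with $n-c\ge 2$, Remark \ref{pamsrem}(i) then guarantees $\depth_{I(Z)}A\ge \min(3,\dim X+1)=3$, which is the depth hypothesis needed throughout Section 3.

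The one place deserving genuine attention is the case distinction attached to $t=3$ and $t\ge 4$ in Theorem \ref{Bigthm}. Because $b_1=0$ and $a_{c+1}=1$, we always fall in the boundary regime $a_{c+1}\le 1+b_1$, so the supplementary hypotheses are genuinely invoked rather than vacuous; I would simply observe that they too hold automatically, since $a_{c+2}=1>0=b_1$ (for $t=3$) and $a_{c+3}=1>0=b_1$ (for $t\ge 4$). As the standing assumption $t>2$ excludes the separate $t=2$ clause of the theorem, all hypotheses are met and Theorem \ref{Bigthm} yields that $X$ is of wild representation type. I do not anticipate any real obstacle here: the substantive content was already established in Theorem \ref{Bigthm} and its preparatory results, and the corollary merely records the numerical range $n-c\ge 2$, $t>2$ in which those hypotheses are automatically satisfied in the linear case.
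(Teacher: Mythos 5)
Your proof is correct and follows exactly the paper's own route: the paper likewise deduces the corollary from Theorem \ref{Bigthm} by setting $a_i=1$ and $b_j=0$ and noting that the key inequality $\sum_{j=c+1}^{t+c-1}a_j>1+\sum_{i=1}^{t-1}b_i+b_1-b_t$ becomes $t>2$. Your additional verification of the supplementary conditions for $t=3$ and $t\ge 4$ (which the paper leaves implicit) is accurate and harmless.
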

\begin{proof} It immediately follows from Theorem \ref{Bigthm} taking $b_j=0$ for all $j$ and $a_i=1$ for all $i$. Notice that the hypothesis $\sum _{j=c+1}^{t+c-1}a_j>1+\sum_{i=1}^{t-1}b_i+b_1-b_t$ is now equivalent to  $t>2$.
\end{proof}

\begin{remark} \rm The cases of linear determinantal varieties not covered in the above Corollary will be treated in next section where a strong result will be obtained. Indeed, we will prove that if $n-c=1$ and $t>2$, or $t=2$ and $(n-c,c)\notin \{(2,2),(2,3) \}$, then $X$ has Ulrich wild representation type (see Corollary \ref{maincoro} and Theorem \ref{mainthm2}).
\end{remark}

The following result applies also to $t=2$.

\begin{corollary}\label{aGreaterbCase}  Let $X\subset \PP^n$ be a general
  standard determinantal scheme of codimension $c\ge 1$. Assume that $\dim
  X\ge 2$, $a_j > b_i$ for any $j,i$ and that $b_{t-1}-b_t \le \max\{0,t-3\}$.
  Moreover  if $a_{c+1}= 1+b_1$ and $t=2$ we also suppose that
  $2\alpha \le n-2$  where $\alpha := \# \{i \in
  \{1,c+1\}/ a_i=a_{c+1} \}$. Then, $X$ has wild representation type.
\end{corollary}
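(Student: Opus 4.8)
The plan is to deduce the statement directly from Theorem \ref{Bigthm}, so the whole task reduces to checking that the two hypotheses $a_j>b_i$ (for all $i,j$) and $b_{t-1}-b_t\le\max\{0,t-3\}$ force the numerical hypotheses appearing there. First I would record the cheap consequences of $a_j>b_i$: since all entries are integers with $a_j\ge b_i+1$, the requirement $a_{c+i}\ge b_i$ (resp. $a_{c+i}>b_i$) of Theorem \ref{Bigthm} holds trivially, and the auxiliary conditions $a_{c+2}>b_1$ (for $t=3$) and $a_{c+3}>b_1$ (for $t\ge 4$) hold automatically whenever the relevant index exists. The assumption $\dim X\ge 2$ and the generality of $X$ are given, so the only substantial point left is the main inequality $\sum_{j=c+1}^{t+c-1}a_j>1+\sum_{i=1}^{t-1}b_i+b_1-b_t$, together with its exceptional $t=2$ degeneration.

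For $t\ge 3$ I would simply invoke the elementary estimate carried out in Remark \ref{rem36}(i). Here $\max\{0,t-3\}=t-3$, so the hypothesis reads $b_{t-1}-b_t\le t-3$, which gives $t>2+b_{t-1}-b_t$; this is exactly the condition under which Remark \ref{rem36}(i) shows, starting from $a_j\ge b_i+1$, that $\sum_{j=c+1}^{t+c-1}a_j\ge b_1+\sum_{i=1}^{t-2}b_i+(t-1)>1+\sum_{i=1}^{t-1}b_i+b_1-b_t$. Thus every hypothesis of Theorem \ref{Bigthm} is met and $X$ is of wild representation type.

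The delicate case is $t=2$, where $\max\{0,t-3\}=0$ forces $b_1-b_2\le 0$; combined with the standing order $b_t\le\cdots\le b_1$ this yields $b_1=b_2$. Now the main inequality of Theorem \ref{Bigthm} becomes $a_{c+1}>1+2b_1-b_2=1+b_1$, while $a_j>b_i$ only guarantees $a_{c+1}\ge 1+b_1$, so I would split according to whether this is strict or an equality. If $a_{c+1}>1+b_1$, the main inequality holds and Theorem \ref{Bigthm} applies directly. If $a_{c+1}=1+b_1$, then $a_{c+1}=1+2b_1-b_2$, placing us precisely in the exceptional $t=2$ clause of Theorem \ref{Bigthm}; and the extra hypothesis $2\alpha\le n-2$, assumed in the present statement exactly in this case, supplies the missing input. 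Hence $X$ is wild in every case.

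The only genuine obstacle is this $t=2$ boundary: the simple bound of Remark \ref{rem36}(i) degenerates from a strict inequality to an equality, so one cannot conclude through the generic argument and must instead route through the refined statement of Proposition \ref{t=2} as packaged in the exceptional clause of Theorem \ref{Bigthm}. Everything else is routine bookkeeping with the inequalities $a_j\ge b_i+1$.
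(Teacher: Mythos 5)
Your proposal is correct and follows essentially the same route as the paper: for $t\ge 3$ it invokes Remark \ref{rem36}(i) to derive the inequality $\sum_{j=c+1}^{t+c-1}a_j>1+\sum_{i=1}^{t-1}b_i+b_1-b_t$ from $b_{t-1}-b_t\le t-3$ and then applies Theorem \ref{Bigthm}, and for $t=2$ it falls back on the final clause of Theorem \ref{Bigthm}. Your write-up is merely more explicit than the paper's two-line argument, in particular in spelling out that $b_1=b_2$ forces the dichotomy $a_{c+1}>1+b_1$ versus $a_{c+1}=1+2b_1-b_2$, which is a useful clarification but not a different proof.
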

\begin{proof} Using Remark~\ref{rem36}(i) we get $\sum
  _{j=c+1}^{t+c-1}a_j>1+\sum_{i=1}^{t-1}b_i+b_1-b_t$ provided $b_{t-1}-b_t \le
  t-3$. Hence if $t \ge 3$, $X$ is wild by Theorem \ref{Bigthm}. For $t =2$, $X$ is wild
  by the final sentence in Theorem \ref{Bigthm}.
\end{proof}


\section{Ulrich  bundles of low rank}

From now on, we will only deal with linear standard determinantal schemes $X\subset \PP^n$ of codimension $c\ge 1$ and we will assume $t\ge 2$ since the case $t=1$ corresponds to a projective space $\PP^{n-c}\subset \PP^n$ and ACM bundles on  projective spaces were classified by Horrocks.

Recall that, in the particular case of a linear standard determinantal scheme $X\subseteq \PP^n$ associated to a $t\times (t+c-1)$ matrix, the homogeneous coordinate ring $A$ of $X$ has a linear minimal free resolution, i.e., its minimal graded free resolution has the following shape
$$0\rightarrow R(-t-c+1)^{{t+c-2 \choose t-1}}\rightarrow \cdots \rightarrow
R(-t-i+1)^{\rho _i} \rightarrow  \cdots  \rightarrow R(-t)^{{t+c-1 \choose t}} \rightarrow R\rightarrow A\rightarrow 0$$
where $$\rho _{i}={c+t-1\choose i+t-1}{i+t-2\choose t-1} \text{ for } 1\le i \le c. $$
In particular, $I(X)$ is generated by forms of degree $t$ and, by  \cite{b-v}; Proposition 2.15, we have
\begin{equation} \label{degree} \deg(X)={t+c-1\choose c}.\end{equation}

 \vskip 2mm Our aim is to address the problem whether $X$ is of wild representation type with respect to the much more restrictive category of its indecomposable Ulrich sheaves. Our approach will be analogous to the one developed in \S 3.
So, the first goal  is to characterize  all Ulrich line bundles on a linear standard determinantal variety $X$ defined by a $t\times (t+c-1)$ matrix. As an application, we will prove that, under some assumptions on $n, c$, and $ t$,  linear standard determinantal varieties have Ulrich wild representation type.  Indeed,   as iterated extensions of Ulrich  line bundles, we will construct on $X$ families of indecomposable Ulrich  bundles of arbitrary high rank and dimension.

Let us first describe the
 Picard group of a smooth linear standard determinantal scheme $X$.
 Assume that $X\subset \PP^n$ is given by the
maximal minors of a $t \times (t+c-1)$  homogeneous matrix $\cA$
representing a homomorphism $\varphi$ of free graded R-modules $$
\varphi: F= \bigoplus_{i=1}^{t+c-1} R(-1) \longrightarrow G=
\bigoplus_{j=1}^{t} R .  $$  Denote by $H$ the general hyperplane section of $X$, by $W\subset X$ the codimension 1 subscheme of $X$ defined by the maximal minors of the $(t-1)\times (t+c-1)$ matrix obtained deleting the last row of $\cA$ and by $Y\subset X$ the codimension 1 subscheme of $X$ defined by the maximal minors of the $t\times (t+c)$ matrix obtained adding a column of general linear forms to $\cA$. Observe that $Y\thicksim W+H$. The following theorem computes the Picard
group of $X$. Indeed, we have:

\begin{theorem}\label{picard}  Let $X\subset \PP^n$ be a smooth  standard linear
determinantal scheme of codimension $c\ge 2$ associated to a $t\times (t+c-1)$ matrix.  If $n-c> 2$ or $n-c=2$ and $t \ge 4$, then
 $Pic(X) \cong \mathbb{Z}^2\cong \langle H,W \rangle$.

 In addition, we have
 $$K_X\cong (t-n+c-2)H+(c-1)W\cong (t-n-1)H+(c-1)Y.$$
\end{theorem}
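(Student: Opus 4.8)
The plan is to treat the two assertions in turn. The second displayed isomorphism for $K_X$ follows formally from the first together with the relation $Y\sim W+H$ recorded just before the statement, since $(t-n-1)H+(c-1)Y=(t-n-1)H+(c-1)(W+H)=(t-n+c-2)H+(c-1)W$; so it suffices to establish $\Pic(X)\cong\ZZ^2=\langle H,W\rangle$ and $K_X\cong(t-n+c-2)H+(c-1)W$.

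For the Picard group I would first note that the classes $H$ and $W$ span a copy of $\ZZ^2$ inside $\Pic(X)$ (their independence is readily checked by degree considerations), so the real content is surjectivity, i.e. that $H$ and $W$ generate. To get this I would compare $X$ with the generic determinantal variety. The linear matrix $\cA$ determines a linear map $K^{n+1}\to\mathrm{Mat}_{t\times(t+c-1)}(K)$, and for general $\cA$ this realizes $X$ as a general linear section of the projectivized generic determinantal locus $\PP(D)\subset\PP^{t(t+c-1)-1}$, where $D$ is the cone of matrices of rank $\le t-1$. The variety $\PP(D)$ is normal with singular locus of codimension $c+2$, and its divisor class group is $\operatorname{Cl}(\PP(D))\cong\ZZ^2$, generated by the hyperplane class and by the determinantal divisor cut out by deleting a row (cf. \cite{b-v}); these restrict to $H$ and $W$ on $X$. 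A Grothendieck--Lefschetz theorem for class groups (cf. \cite{SGA2}) then shows that restriction $\operatorname{Cl}(\PP(D))\to\Pic(X)$ is an isomorphism as soon as $\dim X=n-c\ge 3$, which is exactly the case $n-c>2$.

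For the canonical class I would start from Proposition~\ref{resol}(iii): in the linear case $a_i=1$ and $b_j=0$, so $\ell=t+c-1$ and $K_X\cong S_{c-1}M(\ell-n-1)=S_{c-1}M(t+c-n-2)$. The crucial step is the identification $\widetilde{M}\cong\odi{X}(W)$. Since $X$ is smooth, the ACM sheaf $\widetilde{M}$ is an honest line bundle, and the image $\bar g_t\in M$ of the last basis vector of $G$ is a degree-zero global section of it; at a point $x$ this section vanishes precisely when $g_t$ lies in the image of $\cA(x)$, i.e. when all $t\times t$ minors of $[\cA(x)\mid g_t]$ vanish. Expanding these along the last column shows they are exactly the maximal minors of the matrix $\cB$ obtained by deleting the last row of $\cA$, so the zero scheme of $\bar g_t$ is $W$ and $\widetilde{M}\cong\odi{X}(W)$. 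Taking $(c-1)$-st symmetric powers gives $\widetilde{S_{c-1}M}\cong\odi{X}((c-1)W)$, and twisting by $(t+c-n-2)H$ yields $K_X\cong(c-1)W+(t+c-n-2)H=(t-n+c-2)H+(c-1)W$, as desired.

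The main obstacle is the borderline surface case $n-c=2$, which lies outside the range of the Grothendieck--Lefschetz isomorphism because of Noether--Lefschetz phenomena; this is precisely where the extra hypothesis $t\ge 4$ must enter, and it has to be handled by a separate argument exploiting the specific determinantal geometry (for instance a direct analysis on a desingularization of $\PP(D)$, or a cohomological vanishing special to these surfaces) to exclude additional Picard classes. A secondary, more routine technical point is checking that $\bar g_t$ cuts out $W$ with the correct reduced structure and that $W$ is a genuine Cartier divisor, which follows from the smoothness of $X$ together with the codimension estimate $\operatorname{codim}_X Z=c+2$.
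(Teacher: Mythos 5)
The paper does not actually prove this theorem: its ``proof'' is a pointer to \cite{Ein}; Corollary 2.4 for the case $\dim X=n-c\ge 3$ and to \cite{FF}; Proposition 5.2 (see also \cite{Lo}) for the surface case $n-c=2$, $t\ge 4$. Your sketch for the case $n-c>2$ --- realizing $X$ as a linear section of the generic determinantal hypersurface locus $\PP(D)$, using $\operatorname{Cl}(\PP(D))\cong\ZZ^2$ from \cite{b-v} and a Grothendieck--Lefschetz descent, and then computing $K_X$ via Proposition \ref{resol}(iii) together with the identification $\widetilde{M}\cong\odi{X}(W)$ --- is in the same spirit as the cited sources, and the canonical-class computation is correct and complete for smooth $X$ (where $Z=\emptyset$, so $\widetilde M$ is genuinely invertible and $\widetilde{S_{c-1}M}\cong\widetilde M^{\otimes(c-1)}$). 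One caveat even in this range: the theorem is stated for an arbitrary \emph{smooth} linear standard determinantal scheme, whereas your Lefschetz argument as written applies to a \emph{general} linear section of $\PP(D)$; you would need either to invoke a version of the Grothendieck--Lefschetz theorem for class groups valid for any ample linear section meeting the singular locus of $\PP(D)$ properly, or to observe that smoothness of $X$ forces the section to avoid $\operatorname{Sing}(\PP(D))$ and that this suffices for the descent.

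The genuine gap is the case $n-c=2$, $t\ge 4$, which is part of the statement and which you explicitly leave unproved. This is not a routine boundary check: it is precisely the Noether--Lefschetz problem for determinantal surfaces, where the naive Lefschetz argument fails (as you correctly diagnose), and where the hypothesis $t\ge 4$ is essential --- for $t\le 3$ the conclusion can fail (e.g.\ the cubic scroll in $\PP^4$ has Picard rank $2$ but for other small-$t$ surfaces extra classes appear, and the general determinantal surface statement requires genuine work). This case is exactly the content of \cite{FF}; Proposition 5.2 and of \cite{Lo}; Theorem III.4.2, and since the paper relies on the theorem in the range $n-c=2$ (e.g.\ in Proposition \ref{Ulrichlinebundles} and the subsequent Ulrich analysis for surfaces), the proof cannot be considered complete without supplying or citing an argument for it. As written, your proposal establishes the theorem only for $n-c>2$ (for general $X$) together with the canonical bundle formula.
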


\begin{proof}
See \cite{Ein}; Corollary 2.4 for smooth standard determinantal varieties
$X\subset \PP^n$ of dimension $n-c\ge 3$ and \cite{FF};  Proposition 5.2 for the case $n-c=2$ (see also \cite{Lo}; Theorem III.4.2,
for smooth surfaces $X\subset \PP^4$).
\end{proof}

As an application of Proposition \ref{resol} and Theorem \ref{picard} we get an explicit list of all ACM line bundles and all Ulrich line bundles on smooth linear standard determinantal varieties. In fact, we have

\begin{proposition} \label{Ulrichlinebundles} Let $X\subset \PP^n$ be a smooth standard linear determinantal scheme of codimension $c\ge 2$ defined by the maximal minors of a $t\times (t+c-1)$ matrix $\cA$. Assume  $n-c> 2$ or $n-c=2$ and $t \ge 4$.  Let $\cL$ be a line bundle on $X$. It holds:
 \begin{itemize} \item[(i)] $\cL$ is an ACM line bundle on $X$ if and only if $\cL\cong \odi{X}(aY+bH)$ with $-1\le a\le c$ and $b\in \ZZ$;
 \item[(ii)] $\cL$ is an initialized Ulrich line bundle if and only if
   $\cL\cong \odi{X}(-Y+tH)$ or $\cL\cong \odi{X}(cY-cH)$.
\end{itemize}
\end{proposition}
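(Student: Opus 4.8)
The plan is to reduce the classification of ACM and Ulrich line bundles to a cohomological computation on the explicit list of line bundles provided by the Picard group description in Theorem~\ref{picard}. Since $\di X = n-c \ge 2$ and $\Pic(X) \cong \langle H, W\rangle \cong \langle H, Y\rangle$ (recall $Y \sim W+H$), every line bundle is of the form $\cL \cong \odi{X}(aY+bH)$ for integers $a,b$. The first task for part (i) is to pin down, for which pairs $(a,b)$ the line bundle $\odi{X}(aY+bH)$ is ACM, i.e. has no intermediate cohomology $\Hl^i(X,\cL(j))=0$ for $1\le i \le n-c-1$ and all $j$. The key observation is that the subschemes $W$ and $Y$ are themselves standard determinantal (obtained by deleting a row, resp. adjoining a general column), so that the modules $S_aM$ and their duals have known minimal free resolutions via Proposition~\ref{resol}. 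Concretely, I expect $\odi{X}(aY)$ to be identifiable, up to twist by $H$, with $\widetilde{S_aM}$ for $0\le a\le c-1$, with the boundary cases $a=-1$ and $a=c$ corresponding to $\widetilde{M}^\vee$ and $\widetilde{K_X}$-type sheaves; for these values Proposition~\ref{resol} guarantees the associated module is MCM, hence the sheaf is ACM. Outside the range $-1\le a\le c$ the resolution lengthens and intermediate cohomology appears, so these are excluded.

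First I would set up the dictionary between the divisorial language ($aY+bH$) and the module language ($S_aM$ up to twist) using Theorem~\ref{picard} and Proposition~\ref{resol}(ii)--(iii); in particular the canonical bundle formula $K_X \cong (t-n-1)H+(c-1)Y$ lets me identify $\odi{X}(cY)$ with a twist of $K_X$, matching $S_{c-1}M$ via Proposition~\ref{resol}(iii). Having established that the $c+2$ sheaves $\widetilde{S_aM}$, $-1\le a \le c$, are exactly the ACM line bundles up to twist by $H$, part (i) follows by showing every other value of $a$ fails the ACM condition (the intermediate cohomology of $S_aM$ for $a\ge c+1$ or $a\le -2$ is nonzero), which one reads off from the Eagon--Northcott/Buchsbaum--Rim type resolutions $\cD_i(\varphi)$.

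For part (ii), an Ulrich line bundle is an ACM line bundle $\cL$ with $\h^0(\cL_{init}) = \deg(X)\cdot \rk(\cL) = \deg(X) = \binom{t+c-1}{c}$ by \eqref{degree}. So I would take each candidate $\odi{X}(aY+bH)$ from part (i), initialize it, and compute $\h^0$ of the initialized sheaf. Equivalently, using the characterization that $\cL$ is Ulrich iff $\Hl^*(X,\cL_{init}(-1))$ and higher cohomology vanish in the appropriate degrees (the linear-resolution criterion: $\cL$ is Ulrich iff the pushforward to $\PP^{n-c}$ under a general linear projection is trivial, i.e. $\cL$ has a linear resolution as an $A$-module with $\deg(X)$ generators all in the same degree), I would match the Betti numbers from the resolution $\cD_a(\varphi)$ against the Ulrich condition. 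The number of degree-$t$ generators of $I(X)$ and the shape of the linear resolution displayed before \eqref{degree} should force the initialized twist $b$ uniquely for each admissible $a$, and only the two extreme cases $a=-1$ (giving $\odi{X}(-Y+tH)$) and $a=c$ (giving $\odi{X}(cY-cH)$) will satisfy the generator count $\binom{t+c-1}{c}$; the intermediate $a$ produce too few generators in a single degree.

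The main obstacle I anticipate is the precise identification of the initializing twist and the exact value of $\h^0(\cL_{init})$ for each of the $c+2$ ACM line bundles: this requires reading the first (or last) Betti number of the minimal resolution $\cD_a(\varphi)$ of $S_aM$ and translating it, via the correspondence $E \leftrightarrow \widetilde E$ and the identity $\h^0(\widetilde{E}_{init})=\mu(E_{init})$ for the minimal number of generators, into a count that can be compared with $\binom{t+c-1}{c}$. The combinatorics of the generalized Koszul complexes $\wedge^\bullet F \otimes S_\bullet G$ is where the computation lives, but it is entirely mechanical once the dictionary is in place; the conceptual content is the observation that only the two boundary sheaves $\widetilde{M}^\vee$ and (the twist of) $\widetilde{K_X}$ have their minimal generators concentrated in a single degree with the maximal count $\deg(X)$.
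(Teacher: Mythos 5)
Your proposal follows essentially the same route as the paper's (very terse) proof, which consists precisely of the dictionary $\Hl^0_*(\odi{X}(aY+bH))\cong S_aM(\delta)$ together with the structure of the resolutions $\cD_a(\varphi)$ (cited as Eisenbud, Theorem A2.10), from which CM-ness exactly for $-1\le a\le c$ and the generator count $\binom{t+a-1}{a}$ versus $\deg X=\binom{t+c-1}{c}$ are read off just as you describe. One small slip worth fixing: it is $\odi{X}((c-1)Y)$, not $\odi{X}(cY)$, that is a twist of $K_X$ (Proposition~\ref{resol}(iii) identifies $K_X$ with $S_{c-1}M$ up to twist, consistent with $K_X\cong (t-n-1)H+(c-1)Y$), while the endpoint $a=c$ corresponds to $S_cM$; this does not affect your argument, since Proposition~\ref{resol}(ii) gives the length-$c$ resolution, hence MCM-ness, uniformly for the whole range $-1\le a\le c$.
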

\begin{proof} (i) Since $\Hl^0_*(\odi{X}(aY+bH))\cong \widetilde{S_aM(\delta)}$ for a suitable $\delta$, the results follows from \cite{eise}; Theorem A2.10.

(ii) It follows from (i) and \cite{eise}; Theorem A2.10.
\end{proof}

The existence of rank 1 Ulrich modules on any linear determinantal scheme was first proved in \cite{BHU}; Proposition 2.8,
  and to our knowledge the first example of Ulrich module of high rank on a linear determinantal variety was given in \cite{KM2012}; Theorem 3.2, where the authors prove that under some mild hypothesis  $\Ext^1(M,S_{c-1}M)$ is an Ulrich module of rank $c$. The above Proposition gives the complete list of rank 1 Ulrich bundles on smooth linear standard determinantal schemes of dimension $\ge 2$. Nevertheless,  the hypothesis smooth as well as the hypothesis $c\ge 2$ and $n-c\ge 2$ can be dropped,  and we have

\begin{proposition}\label{Ulrichmodulesrk1}
Let $X\subset \PP^n$ be a standard linear determinantal scheme of codimension $c$ defined by the maximal minors of a $t\times (t+c-1)$ matrix $\cA$. Set $R=k[x_0,\cdots,x_n]/I_t(\cA)$ and denote by $\mathfrak{p} $ (resp. $\mathfrak{q} $)
the ideal generated by the $(t-1)\times (t-1)$ minors of the first $t-1$ rows (resp. columns) of $\cA$. It holds:
\begin{itemize}
\item[(i)] $\mathfrak{p}^{\ell} $  and $\mathfrak{q} ^{\ell }$ represents all reflexive rank 1 modules
\item[(ii)] $\mathfrak{p}^{\ell} $ (resp. $\mathfrak{q} ^{\ell }$)  is a CM ideal if and only if $\ell \le 1$ (resp. $\ell \le c$).
\item[(iii)] $\mathfrak{p} $  and $\mathfrak{q} ^{c }$ are the only Ulrich ideals.
    \end{itemize}
\end{proposition}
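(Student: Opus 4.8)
The plan is to read off all three assertions from the divisor class group of the coordinate ring $A=R/I_t(\cA)$ (the ring denoted $R$ in the statement) together with the explicit resolutions of the symmetric powers $S_iM$ of $M=\coker(\varphi)$ recorded in Proposition \ref{resol}. First I would invoke \cite{b-v}: a linear standard determinantal ring is a normal Cohen--Macaulay domain, and, up to the twist by $\odi{X}(H)$, its reflexive rank $1$ modules are classified by an infinite cyclic group generated by the class of a height one prime of submaximal minors. Since $\mathfrak{p}$ and $\mathfrak{q}$ are two distinct, non-principal height one primes, each represents $\pm$ a generator; concretely $\mathfrak{p}=I(W)$ is the ideal of the divisor $W$, and one checks $[\mathfrak{p}]=-[\mathfrak{q}]$, so that the positive (resp. negative) divisorial powers of $\mathfrak{q}$ (resp. $\mathfrak{p}$) exhaust the two halves of the group. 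This is precisely assertion (i), with the trivial class represented by $A$ itself.

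Next I would set up the dictionary between these divisorial powers and the modules $S_iM$. On the open set $U=X\setminus Z$ the sheaf $\widetilde{M}$ is invertible of rank $1$ and represents the generator $[\mathfrak{q}]$; hence $\widetilde{S_\ell M}|_U\cong(\widetilde{M}|_U)^{\otimes\ell}$ represents $\ell[\mathfrak{q}]$ while $(\widetilde{M^{\vee}}|_U)^{\otimes\ell}$ represents $\ell[\mathfrak{p}]=-\ell[\mathfrak{q}]$, so that $\mathfrak{q}\cong M$ and $\mathfrak{p}\cong M^{\vee}=S_{-1}M$ up to twist, and, more generally, for $0\le\ell\le c$ the reflexive module of class $\ell$ is $S_\ell M$. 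Assertion (ii) is then the question of when $S_iM$ is MCM. By Proposition \ref{resol}(ii) it is MCM for $-1\le i\le c$; for $i\ge c+1$ Proposition \ref{resol}(iv) provides a minimal free resolution of length $i>c$, so $\pd S_iM=i>c$ and a rank $1$ module of this projective dimension cannot be MCM; and under $\Hom(-,K_A)$, with $K_A\cong S_{c-1}M$ up to twist, the negative classes $-\ell$ with $\ell\ge 2$ go to classes $c-1+\ell\ge c+1$ and so fail as well. Translating, $\mathfrak{q}^{\ell}$ (class $\ell$) is CM iff $0\le\ell\le c$ and $\mathfrak{p}^{\ell}$ (class $-\ell$) is CM iff $\ell\le 1$, which is (ii).

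For (iii) I would isolate, among the MCM divisorial powers (classes $-1\le a\le c$), those attaining the Ulrich bound of $\deg(X)=\binom{t+c-1}{c}$ generators, cf. \eqref{degree}. The number of generators of the class $a$ module is the rank of the zeroth term of the (linear, since $\cA$ is linear) resolution $\mathcal{D}_a(\varphi)$, and this equals $\deg(X)$ exactly at the two extremes $a=-1$ and $a=c$; this is the content of Proposition \ref{Ulrichlinebundles}(ii), whose proof uses only the resolutions and therefore carries over without the smoothness hypothesis present there. Under the dictionary the two extreme classes are $\mathfrak{p}$ and $\mathfrak{q}^{c}$, while the intermediate classes $0\le a\le c-1$ (i.e. $A,\mathfrak{q},\dots,\mathfrak{q}^{c-1}$) fall short of the bound; hence $\mathfrak{p}$ and $\mathfrak{q}^{c}$ are the only Ulrich ideals.

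The step I expect to be the main obstacle is the identification in the second paragraph: one must check that the \emph{ordinary} powers $\mathfrak{p}^{\ell}$ and $\mathfrak{q}^{\ell}$, and not merely their reflexive hulls or symbolic powers, are the correct representatives in the stated CM ranges, and one must track the grading twists precisely. This rests on the fact that $\mathfrak{p}$ and $\mathfrak{q}$ are ideals of maximal minors of submatrices of $\cA$, whose powers are well behaved so that ordinary and divisorial powers coincide in the relevant range, giving $\mathfrak{q}^{\ell}\cong S_\ell M$ and $\mathfrak{p}^{\ell}\cong (S_{-1}M)^{\otimes\ell}$-divisorially, up to twist; verifying this carefully through the generic perfection and the class group computation of \cite{b-v} is where the genuine work lies.
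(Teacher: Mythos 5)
Your proposal is correct in substance but takes a genuinely different route from the paper, whose entire proof is the one\-/line citation ``Both (i) and (ii) follow from \cite{BRW}; Theorem 4.3'' (part (iii) is not even addressed there, though it is also contained in that theorem). You instead rebuild the statement from the divisor class group of \cite{b-v} together with the complexes ${\cD}_i(\varphi)$ of Proposition \ref{resol}. Your dictionary $\mathfrak{q}^{(\ell)}\cong S_{\ell}M$ and $\mathfrak{p}\cong M^{\vee}$ (up to twist) agrees with the conventions the paper adopts later (Notation \ref{nota}, where the two Ulrich modules are exactly $L_1=M^{\vee}(t-1)$ and $L_2=S_cM$), and your generator count for (iii) --- the zeroth term of ${\cD}_a(\varphi)$ has rank $\binom{a+t-1}{t-1}$, which meets the bound $\deg X=\binom{t+c-1}{c}$ precisely for $a=-1$ and $a=c$ --- is the same computation that underlies Proposition \ref{Ulrichlinebundles}(ii). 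What your route buys is a self-contained argument, including an explicit proof of (iii); what it costs is exactly the point you flag: one must show that the \emph{ordinary} powers $\mathfrak{p}^{\ell},\mathfrak{q}^{\ell}$ agree with their divisorial powers in the relevant range, and, for the duality step excluding $\mathfrak{p}^{\ell}$ with $\ell\ge 2$, that $S_{c-1+\ell}M$ really is the reflexive representative of its class. Those facts are the actual content of \cite{BRW}; Theorem 4.3 and of \cite{b-v}, Chapter 9, so your argument in effect re-derives the quoted theorem rather than bypassing it. Finally, both you and the paper leave implicit that the class-group description, established in the literature for the generic determinantal ring, descends to the particular linear scheme $X\subset\PP^n$ at hand.
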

\begin{proof} Both (i) and (ii)   follow from \cite{BRW}; Theorem 4.3.
\end{proof}

\begin{remark} \rm (a) It is worthwhile to point out that Proposition \ref{Ulrichmodulesrk1} was stated in \cite{BRW} in a more general set up. Indeed, it was stated by linear determinantal schemes defined by the $r\times r$ minors of a $m\times n$ matrix. In this paper we focus our attention in the particular case $r=min\{m,n\}$.

(b) With the previous notation, Proposition \ref{Ulrichmodulesrk1} implies:
 \begin{itemize} \item[(i)] $S_{i}M$ is a maximal Cohen-Macaulay $A$-module if and only if  $-1\le i\le c$ (we  interpret $S_{-1}M$ as $\Hom_A(M,A)$);
  \item[(ii)]  $S_{i}M$ is an   Ulrich
module if and only if $i=-1$ or $c$.
\end{itemize}
\end{remark}

\begin{notation} \label{nota} \rm From now, we set $\cL _1:= \odi{X}(-Y+tH)$ and $\cL
  _2:=\odi{X}(cY-cH)$ if $X=U$, i.e. $X$ is a local complete intersection in
  $\PP^n$. The corresponding $R/I(X)$-modules are denoted by
  $L_1=\Hl^0_*(\cL_1)$ and $L_2=\Hl^0_*(\cL_2)$. Note that $L_1=M^{\vee }(t-1)$
  and $L_2=S_cM$. If $X \ne U$, we put $\cL _1= \widetilde M^{\vee }(t-1)$,
  $\cL _2= \widetilde {S_{c}M}$ and we let $\cK_X = \widetilde
  {S_{c-1}M}(t+c-n-2)$ be the canonical sheaf.
\end{notation}

We observe that $\cL _1 \cong \cL_2^{\vee} \otimes \cK_X\otimes \odi{X} ((n-c+1)H)$ if
$X=U$. This isomorphism suggests us that maybe the fact of being Ulrich is
invariant under dualizing and twisting by suitable twist of the canonical
sheaf. In fact, we have the following generalization of Lemma 2.4 in
\cite{CH}.

\begin{lemma} Let $\shE$ be an initialized Ulrich sheaf of rank $r$ on an ACM
  projective scheme $X$ and let \ $d= \dim X$. Then $\shH om_{{\mathcal
      O}_X}(\shE,\cK_X(d+1))$ is also an initialized Ulrich sheaf of rank $r$.
  Moreover if $X \subset \PP^n$ is a local complete intersection, then we have
  an isomorphism of initialized Ulrich sheaves; \[ \shE ^{\vee} \otimes \cK_X\otimes  \odi{X}
  ((d+1)H) \simeq \shH om_{{\mathcal O}_X}(\shE,\cK_X(d+1)).\]
  \end{lemma}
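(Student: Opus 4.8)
The plan is to prove that $\shH om_{\OO_X}(\shE,\cK_X(d+1))$ is an initialized Ulrich sheaf by checking the three defining properties in turn: it is ACM, it has the correct rank, and it is initialized with the correct number of global sections. The key input will be Serre duality (or Grothendieck duality) on the ACM scheme $X$, which relates the intermediate cohomology of a sheaf to that of its dual twisted by the canonical sheaf. Throughout I write $\shF := \shH om_{\OO_X}(\shE,\cK_X(d+1))$ and use that $\shE$ is ACM, initialized, of rank $r$, with $\h^0(\shE)=\deg(X)r$.

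First I would establish that $\shF$ is ACM and of rank $r$. The rank is immediate since dualizing and twisting by a line bundle (or invertible sheaf, on the locus where things are locally free) preserves rank; more carefully, since $\cK_X$ is a rank-one reflexive sheaf, $\rk(\shF)=\rk(\shE)=r$. For the ACM property, the crucial tool is Serre duality on the ACM scheme $X$ of dimension $d$: for $1\le i\le d-1$ one has
\begin{equation*}
\Hl^i(X,\shF(t)) \cong \Hl^i(X,\shH om_{\OO_X}(\shE,\cK_X(d+1))(t)) \cong \Hl^{d-i}(X,\shE(-t-(d+1)))^{\vee},
\end{equation*}
using that $\cK_X$ is the dualizing sheaf of $X$ and that $\shE$ is locally Cohen-Macaulay (so the duality applies without higher $\shE xt$ terms). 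Since $\shE$ is ACM, the right-hand side vanishes for all $t$ when $1\le d-i\le d-1$, i.e. for $1\le i\le d-1$. This shows $\shF$ has no intermediate cohomology; combined with the fact that $\shF$ is again locally Cohen-Macaulay (being the dual of a locally Cohen-Macaulay sheaf twisted by the locally free-on-the-smooth-locus canonical sheaf), we conclude $\shF$ is ACM.

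Next I would verify that $\shF$ is Ulrich and initialized with $\h^0(\shF_{init})=\deg(X)r$. For an ACM sheaf $\shE$, being Ulrich is equivalent to the vanishing of $\Hl^0(X,\shE(-1))$ together with $\Hl^\bullet$ conditions forcing the Hilbert polynomial to be $\deg(X)r\binom{t+d}{d}$, i.e. $\shE$ restricted to a general linear space of complementary dimension has the expected constant cohomology. The cleanest route is to use the characterization that an ACM sheaf $\shE$ of rank $r$ is Ulrich if and only if $\Hl^i(X,\shE(-i))=0$ for $i>0$ and $\Hl^i(X,\shE(-i-1))=0$ for $i<d$ (the linear-resolution/Ulrich criterion). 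Applying Serre duality again converts these vanishings for $\shE$ into precisely the corresponding vanishings for $\shF$, so $\shF$ is Ulrich; and since the initialization degree is determined by the same duality, $\shF$ is already initialized with $\h^0(\shF)=\deg(X)r$. I expect the main obstacle to be the bookkeeping of this last step: one must match the twist $\cK_X(d+1)$ exactly so that the dualized cohomological vanishings land in the right degrees to certify that $\shF$ is simultaneously Ulrich \emph{and} initialized, rather than Ulrich up to an a priori unknown shift.

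Finally, for the concluding isomorphism when $X\subset\PP^n$ is a local complete intersection, the point is that $X$ is then Gorenstein, so $\cK_X$ is an invertible sheaf and $\shE$ is locally free on $X$ (being locally Cohen-Macaulay of full depth on a Gorenstein scheme, hence a maximal Cohen-Macaulay module over a regular-in-codimension local ring after localizing; more directly, on a lci the dualizing sheaf is a line bundle and $\shH om$ into it behaves well). In that situation the natural evaluation map gives
\begin{equation*}
\shH om_{\OO_X}(\shE,\cK_X(d+1)) \cong \shE^{\vee}\otimes \cK_X(d+1) \cong \shE^{\vee}\otimes\cK_X\otimes\odi{X}((d+1)H),
\end{equation*}
where the first isomorphism is the standard identity $\shH om(\shE,\shL)\cong\shE^\vee\otimes\shL$ valid when $\shL$ is invertible and $\shE$ is locally free. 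This is the easy part; the substantive content is the duality argument of the preceding paragraphs, and I would present that as the heart of the proof.
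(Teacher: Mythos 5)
Your argument is correct, but it takes a genuinely different route from the paper's. You work entirely on $X$: Serre duality for the locally Cohen--Macaulay sheaf $\shE$ on the Cohen--Macaulay scheme $X$ gives $\Hl^i(X,\shF(t))\cong \Hl^{d-i}(X,\shE(-t-d-1))^{\vee}$ for $\shF=\shH om_{\odi{X}}(\shE,\cK_X(d+1))$, which kills the intermediate cohomology of $\shF$, and the cohomological characterization of Ulrich sheaves (which for an ACM sheaf reduces to the two boundary vanishings $\Hl^0(\shE(-1))=0$ and $\Hl^d(\shE(-d))=0$) is exactly swapped by the twist $d+1$, so $\shF$ comes out initialized Ulrich with no shift to adjust. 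The paper instead works over $\PP^n$: it uses that an initialized Ulrich sheaf has a linear minimal free $R$-resolution with leftmost term $\cO_{\PP^n}(-c)^s$ whose graded Betti numbers are self-dual (Eisenbud--Schreyer--Weyman, Corollary 2.2), identifies $\shH om_{\odi{X}}(\shE,\cK_X(d+1))$ with $\shE xt^c_{\cO_{\PP^n}}(\shE,\cO_{\PP^n}(-c))$ by applying duality twice (on $X$ and on $\PP^n$), and reads off the conclusion by dualizing the resolution of $\shE(c)$. Your version is more self-contained and makes the initialization bookkeeping transparent; the paper's version buys an explicit linear resolution of the dual sheaf with no cohomology computation. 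One small repair: in your last paragraph you assert that $\shE$ is locally free when $X$ is a local complete intersection, which is false in general (an ACM sheaf on a quadric cone need not be free at the vertex); but you do not need it, since the identity $\shH om_{\odi{X}}(\shE,\shL)\cong \shE^{\vee}\otimes\shL$ holds for any coherent $\shE$ provided $\shL$ is invertible, and $\cK_X(d+1)$ is invertible precisely because a local complete intersection is Gorenstein.
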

\begin{proof}
  It is well known that $\shH om_{{\mathcal O}_X}(\shE,\cK_X)$ is ACM if and
  only if $\shE$ is ACM. Moreover if $\shE$ is an initialized Ulrich sheaf,
  then the minimal resolution of $\shE$ is linear with $\cO_{\PP^n}(-c)^s$ as
  its leftmost term where $s=m(\shE)$. Indeed the minimal resolution of $\shE$
  is self-dual with regard to the graded Betti numbers by \cite{ESW};
  Corollary 2.2. Since $n-c=d$, we get
  \[ \shH om_{{\mathcal O}_X}(\shE,\cK_X(d+1)) \simeq \shE xt_{{\mathcal
      O}_{\PP^n}}^c(\shE,\cO_{\PP^n}(-c))\] by using duality twice (on $X$ and
  on $\PP^n$). Applying $ \shH om_{{\mathcal O}_X}(-,\cO_{\PP^n})$ to the
  minimal resolution of $\shE (c)$, it follows that $\shE xt_{{\mathcal
      O}_X}^c(\shE(c),\cO_{\PP^n})$ is an initialized Ulrich sheaf and we get
  the lemma.
\end{proof}

Let us now move forward to the construction of  initialized indecomposable Ulrich bundles of higher rank on a linear standard determinantal variety $X$. The idea is to construct them as iterated extensions of $\cL_1$ by $\cL_2$. So, we start collecting  a series of technical lemmas and  fixing some extra notation  which will allow us to conclude that under some numerical restrictions on $n$, $c$ and $t$, we have $\Ext^1_ {\odi{X}}(\cL_2,\cL_1)\ne 0$.

\begin{lemma}\label{HomL1L2} We keep the above notation and we assume that $\depth _{I(Z)}A\ge 2$, e.g. $X$ a general determinantal scheme and  $\dim X\ge 1$. Then, it holds:
\begin{itemize}
\item[(1)] $\Hom_{\odi{X}}(\cL_1,\cL_2)\cong  \  _0\! \Hom(L_1,L_2)= S_{c+1}M(1-t)_0=0$;
\item[(2)] $\Hom_{\odi{X}}(\cL_2,\cL_1)\cong \  _0\! \Hom(L_2,L_1)=0$.
\end{itemize}
\end{lemma}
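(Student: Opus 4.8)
The plan is to reduce everything to module-theoretic computations using the comparison isomorphism \eqref{NM} and the explicit identification of $L_1$ and $L_2$ given in Notation \ref{nota}, exactly as was done in Lemma \ref{ExtSiMSjMExtS2i-j-1+cMR} and Proposition \ref{HomM-Mv}. Since $\depth_{I(Z)}A \ge 2$ and $\widetilde{L_1}, \widetilde{L_2}$ are invertible on the local complete intersection locus $U = X \setminus Z$, the map \eqref{NM} with $r=1$ gives a degree-preserving isomorphism in degree $i=0$, so $\Hom_{\odi{X}}(\cL_1,\cL_2) \cong {}_0\!\Hom_A(L_1,L_2)$ and similarly for the other $\Hom$. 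Thus the sheaf statements follow once I establish the two module-level vanishings. First I would substitute the identifications $L_1 = M^\vee(t-1)$ and $L_2 = S_c M$ (noting $M^\vee = S_{-1}M$), so that the Hom-groups become graded pieces of symmetric-power modules.

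\textbf{Part (1).} For the first statement I compute
\[
\Hom_A(L_1, L_2) = \Hom_A(M^\vee(t-1), S_c M) \cong \bigl(S_c M \otimes_A M\bigr)(1-t),
\]
using that $M$ is locally free of rank $1$ on $U$ so that $\Hom(M^\vee, -) \cong M \otimes -$ there, and then the tensor $S_c M \otimes M$ is identified (up to the relevant twist) with $S_{c+1}M$ via the natural surjection from the symmetric algebra. This yields $\Hom_{\odi{X}}(\cL_1,\cL_2) \cong S_{c+1}M(1-t)_0$, matching the claimed middle expression. To see this degree-$0$ piece vanishes, I would use the presentation of $S_{c+1}M$ coming from the resolution in Proposition \ref{resol}: the module $S_{c+1}M$ is generated in degrees dictated by $S_{c+1}G = \bigoplus R(-\sum b_{i_k})$, i.e. in the linear case by $R(-(c+1))$ since all $b_i = 0$. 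Twisting by $(1-t)$ places the lowest generators in degree $c+1-(1-t) = c+t$ \emph{after} the twist convention, so the degree-$0$ part is forced to be zero provided $c+1 > 1-t$, which holds since $c \ge 1$ and $t \ge 2$; more precisely one checks $S_{c+1}M$ has no elements in negative enough degree for the twist to reach degree $0$.

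\textbf{Part (2).} For the second statement, I would dualize: by the comparison \eqref{NM} and local freeness on $U$,
\[
\Hom_A(L_2, L_1) \cong \Hom_A(S_c M, M^\vee(t-1)) \cong \bigl(\Hom_A(S_c M, M^\vee)\bigr)(t-1),
\]
and using $M^\vee = S_{-1}M$ together with the same tensor-hom adjunction that governed Lemma \ref{ExtSiMSjMExtS2i-j-1+cMR}, this reduces to a graded piece of $\Hom_A(S_{c+1}M, A)$ (equivalently to $S_{?}M$ via the canonical-module identifications $K_X \cong S_{c-1}M(\ell)$). I would then compute the relevant degree and show it lies below the socle/generation range, giving $0$. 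The main obstacle I anticipate is \textbf{bookkeeping the twists correctly}: tracking $\ell$, the shift $t-1$, and the degree shifts introduced by passing between $M^\vee$, $S_cM$, and $K_X$ under the iterated applications of \eqref{NM} and local duality. Once the correct degree is pinned down, each vanishing is a one-line inequality of the form ``lowest generator degree $>$ target degree,'' analogous to the inequalities $\mu - t - b_i - b_j < 0$ that appeared in the proof of Proposition \ref{HomM-Mv}. I would therefore spend most of the effort verifying that the Hom-groups are indeed concentrated in the degrees where the symmetric-power modules vanish, and only then read off the sheaf isomorphisms from \eqref{NM}.
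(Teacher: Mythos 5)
Your reduction via \eqref{NM} and the identification $\Hom_{\odi{X}}(\cL_1,\cL_2)\cong (S_{c+1}M)_{1-t}$ is fine for part (1): since in the linear case $S_{c+1}M$ is a quotient of $S_{c+1}G=\bigoplus R(0)$ it is generated in degree $0$ (not degree $c+1$ as you write --- if all $b_i=0$ then $S_{c+1}G$ carries no twist), and the vanishing is simply $1-t<0$, i.e.\ $t\ge 2$. That slip does not affect the conclusion.

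Part (2), however, has a genuine gap: the anticipated ``one-line inequality of the form lowest generator degree $>$ target degree'' does not exist there. Unwinding your own reduction, $\Hom_{\odi{X}}(\cL_2,\cL_1)$ becomes the degree-$(t-1)$ piece of $\Hom_A(S_{c+1}M,A)$ (equivalently, via the canonical-module identifications, ${}_0\!\Ext^c_R(S_{2c}M(c),R)$). Since $S_{c+1}M$ is generated in degree $0$ and $A_{t-1}\neq 0$, no degree count forces this to vanish; concretely, in the complex computing ${}_0\!\Ext^c_R(S_{2c}M(c),R)$ the relevant term is $\Hom(\wedge^cF\otimes S_cG(c),R)_0\cong K^{\binom{t+c-1}{c}^2}\neq 0$, so the group is the kernel of a nontrivial linear map $K^{\binom{t+c-1}{c}^2}\to R_1^{\oplus\ast}$, whose injectivity is exactly what must be proved. (Compare Propositions \ref{t=2} and \ref{t=3}, where the analogous situation forces a genuine maximal-rank argument for a general matrix $\cA$.) The paper avoids all of this: it observes that $\cL_1$ and $\cL_2$ are non-isomorphic initialized rank-one Ulrich sheaves, and a nonzero map between such sheaves would be injective with cokernel of identically zero Hilbert polynomial, hence an isomorphism --- so $\Hom_{\odi{U}}(\cL_2,\cL_1)=0$ (and likewise in the other direction), after which \eqref{NM} transfers the vanishing to the module level. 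You should either import that Ulrich-theoretic fact or supply the maximal-rank/genericity argument; as written, your part (2) does not close.
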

\begin{proof} Since the proof of (1) and (2) are analogous we will only prove (1) and we leave (2) to the reader. We have $\depth _{I(Z)}A\ge 2$, see Remark \ref{pamsrem} (i), hence $\depth _{\mathfrak{m}} A\ge 2$, whence by (\ref{NM}); $0=\Hom_{\odi{U}}(\cL_1,\cL_2)\cong \Hom_{\odi{X}}(\cL_1,\cL_2)\cong \,  _{0} \Hom(L_1,L_2)$
where the first equality comes from the fact that it does not exist a non-zero morphism between two different rank 1 Ulrich sheaves.
\end{proof}

 \begin{lemma}\label{ExtL2L1H} We keep the notation and we set $U:=X\setminus Z$ where $I(Z)=I_{t-1}(\cA)$. Assume $\depth _{Z}\odi{X}\ge i+2$. Then, we have
\begin{equation}
  \Ext^{i}_{\odi{X}}(\cL_2,\cL_1(\nu))\cong \Ext^{i}_{\odi{U}}(\cL_{2|U} ,\cL_{1|U}(\nu)) \cong \Hl^{i}(U, \cL_2^{\vee}\otimes\cL_1(\nu)) \cong\ \Ext^{i+c}_{\odi{\PP^n}}(\widetilde {S_{2c}M}(c),\cO_{\PP^n} (\nu)) .
\end{equation}
\end{lemma}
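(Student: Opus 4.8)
The plan is to prove the three displayed isomorphisms in turn: the first two are restriction‑to‑$U$ statements that follow from the comparison map \eqref{NM} together with the invertibility of $\widetilde M$ on $U$, while the third is a duality computation that transports the cohomology on $U$ back to a sheaf $\Ext$ on $\PP^n$. Throughout I use that on $U=X\setminus Z$ the sheaf $\widetilde M$ is a line bundle, so that $\cL_1|_U$, $\cL_2|_U$, $\cK_X|_U$ and $\widetilde{S_{2c}M}|_U$ are all locally free, and that $M^\vee(t-1)$ and $K_A$ are maximal Cohen--Macaulay, hence of depth $\depth_{I(Z)}A$ along $I(Z)$.

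For the first two isomorphisms I would apply \eqref{NM} with $N=S_cM$ and $L=M^\vee(t-1)$, so that $\widetilde N=\cL_2$ and $\widetilde L=\cL_1$. Since $\widetilde N$ is locally free on $U$ and $\depth_{I(Z)}L=\depth_{I(Z)}A\ge i+2$, taking $r=i+1$ makes the comparison map an isomorphism in degree $i$; it therefore identifies $\Ext^i_{\odi X}(\cL_2,\cL_1(\nu))$ and $\Hl^i(U,\shH om_{\odi X}(\cL_2,\cL_1(\nu)))$ with the degree-$\nu$ part of $\Ext^i_A(S_cM,M^\vee(t-1))$. Because $\cL_2|_U$ is a line bundle, $\shH om_{\odi U}(\cL_2|_U,\cL_1|_U(\nu))=\cL_2^\vee\otimes\cL_1(\nu)$ and the higher local $\shE xt$ sheaves vanish, so the middle cohomology group is simultaneously $\Ext^i_{\odi U}(\cL_2|_U,\cL_1|_U(\nu))$ and $\Hl^i(U,\cL_2^\vee\otimes\cL_1(\nu))$. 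This gives the first two isomorphisms at once, the depth hypothesis being exactly what upgrades \eqref{NM} from an injection to an isomorphism at the given $i$.

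For the third isomorphism I would first rewrite the sheaf on $U$ so that the canonical sheaf becomes the target of a $\shH om$. Using $\cL_1=\widetilde M^\vee(t-1)$, $\cL_2=\widetilde{S_cM}$ and $\cK_X=\widetilde{S_{c-1}M}(t+c-n-2)$ (Notation~\ref{nota} and Proposition~\ref{resol}(iii)), both sides reduce on $U$ to $\widetilde M^{\otimes(-c-1)}(t-1+\nu)$, giving
\[
\cL_2^\vee\otimes\cL_1(\nu)\ \cong\ \shH om_{\odi U}\bigl(\widetilde{S_{2c}M}(c)|_U,\ \cK_X(n+1+\nu)|_U\bigr).
\]
Since $\widetilde{S_{2c}M}$ is locally free on $U$, the left-hand side of the lemma equals $\Hl^i(U,\shH om_{\odi X}(\widetilde{S_{2c}M}(c),\cK_X(n+1+\nu)))$, and a second application of \eqref{NM} (now with $N=S_{2c}M(c)$ and $L=K_A(n+1+\nu)$, which again has depth $\ge i+2$ along $I(Z)$ because $K_A$ is maximal Cohen--Macaulay) identifies this with ${}_0\Ext^i_A(S_{2c}M(c),K_A(n+1+\nu))$. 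The standard change of rings for the canonical module, $\Ext^i_A(-,K_A)\cong\Ext^{i+c}_R(-,K_R)=\Ext^{i+c}_R(-,R(-n-1))$ --- valid because $A$ is Cohen--Macaulay of codimension $c$, so $\Ext^q_R(A,R)=0$ for $q\ne c$ --- then turns this into ${}_0\Ext^{i+c}_R(S_{2c}M,R(\nu-c))$.

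It remains to identify ${}_0\Ext^{i+c}_R(S_{2c}M,R(\nu-c))$ with $\Ext^{i+c}_{\odi{\PP^n}}(\widetilde{S_{2c}M}(c),\cO_{\PP^n}(\nu))$, and this is the step I expect to require the most care. I would compute the sheaf $\Ext$ from the free resolution $\cD_{2c}(\varphi)$ of $S_{2c}M$ (of finite length by Proposition~\ref{resol}(iv)) via the hypercohomology spectral sequence of the dual complex $\shH om_{\odi{\PP^n}}(\widetilde{\cD_{2c}(\varphi)}(c),\cO_{\PP^n}(\nu))$. As its terms are direct sums of line bundles, only the rows $q=0$ and $q=n$ of $\Hl^q(\PP^n,-)$ survive: the bottom row $q=0$ recomputes the graded module $\Ext^{i+c}_R(S_{2c}M,R(\nu-c))$ in degree $0$, while the top row $q=n$ contributes only in total degrees $\ge n$. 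Now the hypothesis $\depth_Z\odi X\ge i+2$, together with $\depth_{I(Z)}A\le\dim A=\dim X+1$, forces $i\le\dim X-1$, hence $i+c<n$; therefore the top row lies out of the relevant total degree and, the intermediate rows being zero, the spectral sequence degenerates to the claimed isomorphism. The main obstacle is exactly this bookkeeping: a single depth bound must simultaneously make both applications of \eqref{NM} isomorphisms and push the top row of the spectral sequence out of range, so the sharpness of $\depth_Z\odi X\ge i+2$ has to be exploited three separate times.
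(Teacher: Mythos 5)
Your handling of the first two isomorphisms and of the identification $\cL_2^{\vee}\otimes\cL_1(\nu)\cong\shH om_{\odi{U}}(\widetilde{S_{2c}M}(c)|_U,\cK_X(n+1+\nu)|_U)$ is correct and agrees in substance with the paper, which simply re-derives (\ref{NM}) on the spot via the local cohomology sequence relating $\Ext_Z$, $\Ext_{\odi{X}}$ and $\Ext_{\odi{U}}$. The divergence is in the last isomorphism: the paper stays at the sheaf level, passing to $\Ext^{i}_{\odi{X}}(\widetilde{S_{2c}M}(c),\cK_X(n+1+\nu))$ and then applying Serre duality on $X$ and on $\PP^n$, which is unconditional in $\nu$ and in $i$; you instead descend to the graded module $\ _{\nu}\!\Ext^{i+c}_R(S_{2c}M(c),R)$ and re-sheafify over $\PP^n$, and it is this last leg that has a gap.

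Concretely, your claim that the hypothesis forces $i+c<n$ rests on reading $\depth_Z\odi{X}$ as $\depth_{I(Z)}A\le\dim A$. But the condition actually used is the vanishing of local cohomology along $Z$, which is vacuous when $Z=\emptyset$ --- and that is exactly the situation in which the lemma is applied with $i=\dim X$ (for instance $\dim X=2$, $i=2$, hence $i+c=n$, in the proof of Theorem \ref{XdL21}, where $\depth_Z\odi{X}\ge 4$ holds for a general determinantal surface only because $Z=\emptyset$). In the regime $i+c\ge n$ the row $q=n$ of your hypercohomology spectral sequence does contribute to total degree $i+c$, and the comparison $\ _{\nu}\!\Ext^{i+c}_R(S_{2c}M(c),R)\cong\Ext^{i+c}_{\odi{\PP^n}}(\widetilde{S_{2c}M}(c),\cO_{\PP^n}(\nu))$ genuinely fails for $\nu\ll 0$; this is precisely why Lemma \ref{ExtL2L1ExtS2cMR}, which asserts that comparison, carries the extra hypothesis $\nu\ge-\dim X$, whereas Lemma \ref{ExtL2L1H} has no restriction on $\nu$. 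The same boundary case also downgrades your second application of (\ref{NM}) from an isomorphism to an injection, since $\depth_{\mathfrak m}K_A=\dim A=i+1$ there. Your argument is therefore complete only for $i\le\dim X-1$; to cover the remaining case, replace the detour through $R$ by the paper's step: from $\Ext^{i}_{\odi{X}}(\widetilde{S_{2c}M}(c),\cK_X(n+1+\nu))$ apply Serre duality twice, once on $X$ and once on $\PP^n$.
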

\begin{proof} Since $\cL_1$ is  ACM, using  the exact  sequence, cf. \cite{SGA2}; expos\'{e} VI,
$$\cdots \rightarrow \Ext^{i}_{Z}(\cL_2,\cL_1 (\mu))\rightarrow \Ext^{i}_{\odi{X}}(\cL_2,\cL_1 (\mu))\rightarrow \Ext^{i}_{\odi{U}}(\cL_{2|U},\cL_{1|U} (\mu))\rightarrow \Ext^{i+1}_{Z}(\cL_2,\cL_1 (\mu))\rightarrow \cdots
$$
 and the spectral sequence
$$ \Ext^p_{\odi{X}}(\cL_2,{\mathcal H}^q_{Z}(\cL_1 (\mu)))\Rightarrow \Ext^{p+q}_{Z}(\cL_2,\cL_1(\mu))
$$
we get the two first isomorphisms because $\depth _{Z} \cLÑ_1\ge 2+i$
implies that the sheaves ${\mathcal H}^q_{Z}(\cL_1 (\mu))=0$ for $q\le i+1$. To see the rightmost
isomorphism we use Proposition~\ref{resol} (iii), cf. the proof of Lemma \ref{ExtSiMSjMExtS2i-j-1+cMR}, and we get that $$
{\shH}om_{\odi{U}}(\cL _2, \cL_1) \cong {\shH}om_{\odi{U}}(\widetilde{S_{c}M},
\widetilde{M}^{\vee}(t-1)) \cong
{\shH}om_{\odi{U}}(\widetilde{S_{2c}M}(c-n-1), \cK_X) \ .$$ Then the
spectral sequence argument above and $\depth _{Z} \cK_X \ge 2+i$
imply the first isomorphism in $$ \Hl^{i}(U, \cL_2^{\vee}\otimes\cL_1(\nu))
\cong \Ext^{i}_{\odi{X}}(\widetilde{S_{2c}M}(c), \cK_X(n+1+\nu)) \cong
\Ext^{i+c}_{\odi{\PP^n}}(\widetilde{S_{2c}M}(c), \odi{\PP^n}(\nu))$$ while the
second isomorphism may be seen by e.g. using Serre duality twice.
\end{proof}

\begin{lemma}\label{ExtL2L1ExtS2cMR} We keep the notation and we assume  $\depth _{Z}\odi{X}\ge i+2$. Then we have
\begin{equation}
\Ext^{i}_{\odi{X}}(\cL_2,\cL_1(\nu))\cong \ _{\nu}\! \Ext^{i+c}_R(S_{2c}M(c),R) \ \ \text{provided  } \ \nu \ge -\dim X.
\end{equation}
\end{lemma}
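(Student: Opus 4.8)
The plan is to combine the previous lemma with the standard comparison between graded module $\Ext$ and sheaf $\Ext$ on $\PP^n$. Since we are assuming $\depth_Z\odi{X}\ge i+2$, Lemma~\ref{ExtL2L1H} already gives
$$\Ext^{i}_{\odi{X}}(\cL_2,\cL_1(\nu))\cong \Ext^{i+c}_{\odi{\PP^n}}(\widetilde{S_{2c}M}(c),\cO_{\PP^n}(\nu)),$$
so it suffices to identify this sheaf $\Ext$ with the degree-$\nu$ part of the graded module $\Ext^{i+c}_R(S_{2c}M(c),R)$. Writing $N:=S_{2c}M(c)$, the real content is to prove
$$\Ext^{i+c}_{\odi{\PP^n}}(\widetilde{N},\cO_{\PP^n}(\nu))\cong \ _{\nu}\!\Ext^{i+c}_R(N,R)\qquad\text{whenever }\nu\ge -\dim X,$$
and I would prove this in all cohomological degrees at once.

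First I would fix a finite graded free $R$-resolution $P_\bullet\to N$ (which exists since $R$ is regular), put $Q^\bullet:=\Hom_R(P_\bullet,R)$, so that $\Ext^k_R(N,R)=\Hl^k(Q^\bullet)$, and sheafify. As $\widetilde{P_\bullet}$ is a bounded complex of vector bundles quasi-isomorphic to $\widetilde{N}$, the groups $\Ext^{j}_{\odi{\PP^n}}(\widetilde{N},\cO_{\PP^n}(\nu))$ are the hypercohomology $\mathbb{H}^{j}(\PP^n,\widetilde{Q^\bullet}(\nu))$, computed by the spectral sequence
$$E_1^{p,k}=\Hl^p(\PP^n,\widetilde{Q^k}(\nu))\Longrightarrow \Ext^{p+k}_{\odi{\PP^n}}(\widetilde{N},\cO_{\PP^n}(\nu)).$$
Because each $Q^k$ is free, $\widetilde{Q^k}(\nu)$ is a direct sum of line bundles, so only the rows $p=0$ and $p=n$ of $E_1$ survive. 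On the row $p=0$ one has $E_1^{0,k}=(Q^k)_\nu$ with the degree-$\nu$ part of the differentials of $Q^\bullet$; since taking degree-$\nu$ parts is exact it commutes with cohomology, and the $E_2$-term of this row is exactly $\ _{\nu}\!\Ext^{k}_R(N,R)$. Everything therefore reduces to killing the top row $p=n$.

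For the row $p=n$, Serre duality on $\PP^n$ gives $E_1^{n,k}=\Hl^n(\PP^n,\widetilde{Q^k}(\nu))\cong\big((P_k)_{-n-1-\nu}\big)^\vee$, with differentials dual to those of $P_\bullet$; hence its $E_2$-term is $\big(\Hl_k((P_\bullet)_{-n-1-\nu})\big)^\vee$. As $(P_\bullet)_m$ is, in each internal degree $m$, an exact complex of $K$-vector spaces resolving $N_m$, this vanishes for $k>0$ and equals $(N_{-n-1-\nu})^\vee$ for $k=0$. The decisive numerical input is that $N=S_{2c}M(c)$ is generated in degrees $\ge -c$: in the linear case $M=\coker\varphi$ is generated in degree $0$, so $S_{2c}M$ is generated in degrees $\ge 0$ and $N$ in degrees $\ge -c$, whence $N_m=0$ for $m<-c$. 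The hypothesis $\nu\ge -\dim X=c-n$ forces $-n-1-\nu\le -c-1<-c$, so $N_{-n-1-\nu}=0$ and the entire row $p=n$ of $E_2$ vanishes. With only the row $p=0$ surviving there are no higher differentials, the spectral sequence degenerates, and one obtains the claimed isomorphism in every degree $i+c$.

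The main obstacle is exactly the control of the top cohomology row $p=n$: it is the only place the hypothesis on $\nu$ enters, and the bound $\nu\ge -\dim X$ is precisely the threshold at which the generation degree of $S_{2c}M(c)$ forces $N_{-n-1-\nu}=0$. (Alternatively, one can route the whole computation through graded local duality on $R$ together with Serre duality on $\PP^n$: for the relevant index $q=n+1-i-c\ge 2$ the comparison $\Hl^q_{\mathfrak m}(N)_m\cong \Hl^{q-1}(\PP^n,\widetilde{N}(m))$ holds unconditionally, while the boundary cases $q\in\{0,1\}$ are settled by the same vanishing $N_{-n-1-\nu}=0$. I would present the spectral sequence argument as primary, since it produces the natural comparison isomorphism directly.)
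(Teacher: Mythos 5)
Your proof is correct, but it takes a genuinely different route from the one in the paper. Both arguments start from the last isomorphism of Lemma \ref{ExtL2L1H}, so the real content in each case is the comparison of $\Ext^{i+c}_{\odi{\PP^n}}(\widetilde{S_{2c}M}(c),\cO_{\PP^n}(\nu))$ with $_{\nu}\!\Ext^{i+c}_R(S_{2c}M(c),R)$. The paper does this via the local-cohomology exact sequence relating $\Ext_{\mathfrak m}$, $\Ext_R$ and the sheaf $\Ext$, together with the degenerate spectral sequence $_{\nu}\!\Ext^{j+n+1}_{\mathfrak m}(N,R)\cong\ _{\nu}\!\Ext^j_R(N,E)$ with $E=\Hl^{n+1}_{\mathfrak m}(R)$; the decisive vanishing $_{\nu}\!\Ext^j_R(N,E)=0$ is then extracted from the \emph{explicit} resolution of $S_{2c}M(c)$ of Proposition \ref{resol}(iv), whose $j$-th term is generated in degree $j-c$, so that the relevant graded piece of $E$ vanishes for $\nu\ge -\dim X-j$. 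You instead sheafify an arbitrary finite graded free resolution, run the hypercohomology spectral sequence, and use Serre duality on $\PP^n$ to identify the only obstruction (the row $p=n$) with $(N_{-n-1-\nu})^\vee$, which you kill using only the fact that $N=S_{2c}M(c)$ is generated in degrees $\ge -c$. The two approaches are dual shadows of the same phenomenon and hit the identical threshold $\nu\ge -\dim X$, but yours is marginally more economical: it does not need the specific Eagon--Northcott-type resolution (hence does not invoke Proposition \ref{resol}(iv) and the genericity discussion of Remark \ref{pamsrem}(ii) for this step), only finiteness of a resolution over the polynomial ring and the generation degree of $N$ itself. The paper's version, on the other hand, yields the slightly sharper statement $_{\nu}\!\Ext^j_R(N,E)=0$ for $\nu\ge-\dim X-j$, which is not needed for the lemma but falls out of its method. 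Your parenthetical remark correctly identifies the paper's route as the ``local duality'' alternative.
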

\begin{proof}
  Let $E:=\Hl^{n+1}_{\mathfrak{m}}(R)$. As in the proof of
  Lemma~\ref{ExtL2L1H} there is an exact sequence
{\small
$$ \rightarrow \ _{\nu}\! \Ext^{i+c}_{\mathfrak{m}}(S_{2c}M(c),R)\rightarrow \  _{\nu}\! \Ext^{i+c}_{R}(S_{2c}M(c),R)\rightarrow \Ext^{i+c}_{\odi{\PP^n}}(\widetilde {S_{2c}M}(c),\cO_{\PP^n} (\nu))\rightarrow \ _{\nu}\! \Ext^{i+c+1}_{\mathfrak{m}}(S_{2c}M(c),R)\rightarrow
$$ }and a spectral sequence that degenerates and leads to
 \begin{equation}\label{speclem37} _{\nu}\!
 \Ext^{j+n+1}_{\mathfrak{m}}(S_{2c}M(c),R) \cong \ _{\nu}\!
 \Ext_R^{j}(S_{2c}M(c),E) \ .
\end{equation}
Hence if $i+c+1 < n+1$, we get by \eqref{speclem37} that the map $$\phi: \
_{\nu}\! \Ext^{i+c}_{R}(S_{2c}M(c),R)\rightarrow \
\Ext^{i+c}_{\odi{\PP^n}}(\widetilde {S_{2c}M}(c),\cO_{\PP^n} (\nu))$$ in the
long exact sequence above is an isomorphism for every $\nu$. For
$j:=(i+c+1)-(n+1) \ge 0$ we get by \eqref{speclem37} that $\phi$ is an
isomorphism for those $\nu$ satisfying $$ _{\nu} \!
\Ext^{j-1}_R(S_{2c}M(c),E)= 0 \ \ {\rm and} \ \ _{\nu} \!
\Ext^{j}_R(S_{2c}M(c),E)= 0 \ .$$ Recalling $\dim X=n-c$ it suffices to show
that $_{\nu} \! \Ext^{j}_R(S_{2c}M(c),E)= 0$ for $ \nu \ge -\dim X -j$ (because only the last $\Ext$-group must vanish for $j=0$). To
prove it we apply the functor $_{\nu} \! \Hom_R(-,E)$ to the free resolution
of $S_{2c}M(c)$ deduced from Proposition~\ref{resol} (iv), and we get the
following complex {\small
\begin{equation*}\label{lem37}
\rightarrow \ _\nu \! \Hom_R(\wedge^{j-1}F\otimes S_{2c-j+1}G(c),E)  \rightarrow \  _\nu \! \Hom_R(\wedge^{j}F\otimes S_{2c-j}G(c),E)  \rightarrow \  _\nu \! \Hom_R(\wedge^{j+1}F\otimes S_{2c-j-1}G(c),E)  \rightarrow
\end{equation*}
}whose homology group in the middle is exactly $ _\nu \!
\Ext^{j}_R(S_{2c}M(c),E)$. The whole group in the middle 
is, however, isomorphic to $E(j-c)_{\nu}^{\oplus p}$ for some $p$ and since
$E_q=0$ for $q\ge -n$ we get what we want. Combining with the last isomorphism
in Lemma~\ref{ExtL2L1H} we are done.
\end{proof}

\vskip 2mm From now, given integers $t, c\ge 2$ we set

\begin{equation}\label{fgh}\begin{array}{cclcl} f(t,c) & := & \sum _{i=0}^{t-1}(-1)^{i}{t+c-1\choose t-i-1}{t+c-1-i\choose t-1}{c+2+i\choose i} & = & \frac{(4+12c+8c^2-5ct-7c^2t-ct^2+c^2t^2)(c+t-1)!}{2(2+c)!(t-1)!} \\
g(t,c) & := & \sum _{i=1}^{t-1}(-1)^{i}{t+c-1\choose t-i-1}{t+c-1-i\choose t-1}{c+1+i\choose i-1} & = & \frac{c(-2-4c-t+ct)(c+t-1)!}{2(2+c)!(t-2)!} \\
h(t,c) & := & \sum _{i=2}^{t-1}(-1)^{i}{t+c-1\choose t-i-1}{t+c-1-i\choose t-1}{c+i\choose i-2} & = & \frac{(-c+c^2)(c+t-1)!}{2(2+c)!(t-3)!}, \  \ (h(t,c):=0 \text{ for } t=2); \end{array}\end{equation}

\vskip 2mm
\noindent and, for any  linear determinantal scheme $X\subset \PP^n$ of dimension $d=n-c\ge 2$, we define
$$\chi_d(\cL _{21})(\nu ):=\ext^0_{\odi{X}}(\cL_2,\cL_1(\nu))-\ext^1_{\odi{X}}(\cL_2,\cL_1(\nu))+\ext^2_{\odi{X}}(\cL_2,\cL_1(\nu)).$$
The simplification of the sums of products of binomials in (\ref{fgh}) are performed using the program Mathematica Wolfram.

\begin{theorem}\label{XdL21} Let $X\subset \PP^n$ be  a general linear determinantal scheme of codimension $c$ defined by the maximal minors of a $t\times (t+c-1)$ matrix $\cA$. Assume  $c, d=n-c\ge 2$. Then, we have
\begin{itemize}
\item[(1)] $\chi_d(\cL _{21})(0)\le {d-1\choose 2}h(t,c)+(d-2)g(t,c)+f(t,c)$,
\item[(2)] $\chi_d(\cL _{21})(-1)\le (d-2)h(t,c)+g(t,c)$,
\item[(3)]  $\chi_d(\cL _{21})(-2)\le h(t,c)$
\end{itemize}
and, the inequalities become equalities if $d=2$, or $2\le t\le 3$.
In particular,  $\Ext^1 _{\odi{X}}(\cL_2,\cL_1)\ne 0$ provided $\chi_d(\cL _{21})(0)<0$, e.g. $${d-1\choose 2}(c^2-c)(t-1)(t-2)+(d-2)c(ct-2-4c-t)(t-1)+(4+12c+8c^2-5ct-7c^2t-ct^2+c^2t^2)<0.$$
\end{theorem}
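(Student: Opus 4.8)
The plan is to reduce the whole statement to the computation of three graded Euler numbers of the $R$-dual of a single resolution, and then to propagate a recursion in the pair $(d,\nu)$ down to the base case $d=2$. First I would invoke Lemma~\ref{ExtL2L1ExtS2cMR}: since $c,d\ge 2$ guarantees $\nu\ge -d$ for $\nu\in\{0,-1,-2\}$, it gives $\ext^i_{\odi{X}}(\cL_2,\cL_1(\nu))=\dim{}_\nu\Ext^{c+i}_R(S_{2c}M(c),R)$, so that $\chi_d(\cL_{21})(\nu)=\sum_{i=0}^2(-1)^i\dim{}_\nu\Ext^{c+i}_R(S_{2c}M(c),R)$. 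For general $\cA$, Proposition~\ref{resol}(iv) and Remark~\ref{pamsrem}(ii) supply the minimal free resolution of $S_{2c}M$ with terms $\wedge^kF\otimes S_{2c-k}G$; dualizing and twisting by $c$ produces a finite complex $C^\bullet$ with $H^k(C^\bullet)=\Ext^k_R(S_{2c}M(c),R)$ and, in the linear case $F=R(-1)^{t+c-1}$, $G=R^t$, graded dimensions $\dim(C^{c+i})_\nu=A_i\binom{c+d+\nu+i}{c+d}$ where $A_i=\binom{t+c-1}{t-1-i}\binom{t+c-1-i}{t-1}$. Since $A_i$ vanishes for $i\ge t$, the complex $C^\bullet$ is concentrated in cohomological degrees $c,\dots,c+t-1$.

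Next I would settle the base case and the vanishing needed to start the recursion. For any complex one has $\dim H^c-\dim H^{c+1}+\dim H^{c+2}=\dim C^c-\dim C^{c+1}+\dim C^{c+2}-\dim\im d^{c-1}-\dim\im d^{c+2}$, giving the one-sided bound $\chi_d(\cL_{21})(\nu)\le \dim(C^c)_\nu-\dim(C^{c+1})_\nu+\dim(C^{c+2})_\nu$; evaluated at $\nu=-3$ the right-hand side is $0$, because each $\binom{c+d-3+i}{c+d}$ vanishes for $i\le 2$, so $\chi_d(\cL_{21})(-3)\le 0$ for $d\ge 3$. For $d=2$ the scheme $X$ is a surface, hence $\ext^i=0$ for $i\ge 3$ and the truncated sum is a genuine Euler characteristic; equating the Euler characteristic of $C^\bullet$ with that of its cohomology and reading off the binomial sum identifies $\chi_2(\cL_{21})(0)=f(t,c)$, $\chi_2(\cL_{21})(-1)=g(t,c)$, $\chi_2(\cL_{21})(-2)=h(t,c)$ (here $\binom{c+d+\nu+i}{c+d}$ becomes $\binom{c+2+i}{i}$, $\binom{c+1+i}{i-1}$, $\binom{c+i}{i-2}$ respectively). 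This is exactly where the Mathematica simplification of (\ref{fgh}) enters.

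The engine is a hyperplane-section recursion. For general $H$, $X'=X\cap H$ is again a general linear determinantal scheme of dimension $d-1$ and $\cL_1,\cL_2$ restrict to the two rank-one Ulrich sheaves on $X'$; writing $\shG=\shH om_{\odi{X}}(\cL_2,\cL_1)$, the sequence $0\to\shG(\nu-1)\to\shG(\nu)\to\shG|_{X'}(\nu)\to 0$ yields, after chasing the long exact cohomology sequence and taking the alternating sum over $i=0,1,2$, the exact defect $\chi_d(\cL_{21})(\nu)-\chi_d(\cL_{21})(\nu-1)-\chi_{d-1}(\cL_{21})(\nu)=-\gamma_2$, where $\gamma_2=\dim\im\big(H^2(X',\shG|_{X'}(\nu))\to H^3(X,\shG(\nu-1))\big)\ge 0$. (The depth hypothesis $\depth_{I(Z)}A\ge 3$, valid for general $X$ with $d\ge 2$, lets me replace these sheaf groups by the $\Ext$ groups above and keep the comparison on $U=X\setminus Z$ as in Lemma~\ref{ExtL2L1ExtS2cMR}.) Feeding in $\chi_d(\cL_{21})(-3)\le 0$ and the base values $f,g,h$, and unrolling $\chi_d(\nu)\le\chi_d(\nu-1)+\chi_{d-1}(\nu)$ first at $\nu=-2$, then $\nu=-1$, then $\nu=0$, telescopes exactly to $\chi_d(\cL_{21})(-2)\le h$, then $\chi_d(\cL_{21})(-1)\le (d-2)h+g$, then $\chi_d(\cL_{21})(0)\le \binom{d-1}{2}h+(d-2)g+f$, the coefficient $\binom{d-1}{2}=\sum_{m=1}^{d-2}m$ coming from the double summation. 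Equality holds when every defect $\gamma_2$ vanishes: automatically for $d=2$ (no recursion), and for $2\le t\le 3$ because then $A_3=0$ forces $H^3(X,\shG(\nu-1))=\ext^3=0$.

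For the final assertion, Lemma~\ref{HomL1L2}(2) gives $\ext^0(\cL_2,\cL_1)=0$, so $\chi_d(\cL_{21})(0)=-\ext^1+\ext^2$; whenever $\chi_d(\cL_{21})(0)<0$ we obtain $\ext^1>\ext^2\ge 0$, i.e. $\Ext^1_{\odi{X}}(\cL_2,\cL_1)\ne 0$. Since $\binom{d-1}{2}h+(d-2)g+f$ multiplied by the positive factor $2(2+c)!(t-1)!/(c+t-1)!$ is precisely the displayed polynomial, negativity of that polynomial forces the bound, hence $\chi_d(\cL_{21})(0)$, to be negative. The hard part will be purely bookkeeping but unavoidable: pinning down the exact twists and ranks in the $R$-dual of the resolution of $S_{2c}M(c)$ and checking that its three graded Euler numbers collapse to $f,g,h$ through the binomial identities in (\ref{fgh}); a secondary technical point is justifying the hyperplane-section comparison on the possibly singular $X$ via the depth conditions on $I(Z)$ rather than assuming $X$ smooth.
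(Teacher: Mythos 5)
Your proposal is correct and follows essentially the same route as the paper: reduce to the graded modules ${}_\nu\Ext^{c+i}_R(S_{2c}M(c),R)$ via Lemma~\ref{ExtL2L1ExtS2cMR}, read off the graded pieces of the $R$-dual of the Eagon--Northcott-type resolution of $S_{2c}M(c)$ (the paper's complex (\ref{cd})), establish the vanishing at $\nu=-3$ and the base case $\chi_2=f,g,h$, and telescope the hyperplane-section recursion $\chi_d(\nu)\le\chi_d(\nu-1)+\chi_{d-1}(\nu)$, with equality when the complex is short ($t\le 3$) or $d=2$. Your defect term $\gamma_2$ is exactly the paper's $T_3$ (image of the connecting map equals the cokernel of the preceding restriction map), so the two arguments coincide.
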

\begin{proof}  First of all we observe  that since $X$ is a  general linear determinantal scheme, we have $\depth _{Z} \odi{X}\ge 4$ being $I(Z)=I_{t-1}(\cA)$ and the same inequality works when we replace  $X$ by $X\cap \PP^r$ where $\PP^r$ is a general linear subspace $\PP^r\subset \PP^n$ of dimension $r\ge c+2$  (i.e. $\dim X\cap \PP^r\ge 2$). Therefore,  cutting $X$ with a general hyperplane, we get an exact sequence
$$0 \longrightarrow \odi{X}(-1)\longrightarrow \odi{X} \longrightarrow \odi{X\cap H}\longrightarrow 0$$
which induces an exact sequence
\begin{equation}\label{esequence}
\cdots \longrightarrow \Ext^{j-1}_{\odi{X\cap H}}(\cL_{2|H},\cL_{1|H}(\nu))\longrightarrow \Ext^{j}_{\odi{X}}(\cL_2 ,\cL_1 (\nu-1)) \longrightarrow \Ext^{j}_{\odi{X}}(\cL_2 ,\cL_1 (\nu))\longrightarrow \end{equation}
$$\Ext^{j}_{\odi{X\cap H}}(\cL_{2|H},\cL_{1|H}(\nu))\longrightarrow \cdots $$
for $j\le 2$. Indeed, it easily follows from Lemma \ref{ExtL2L1H}. Hence, letting
$$T_3:=\coker (\Ext^{2}_{\odi{X}}(\cL_2 ,\cL_1 (\nu))\longrightarrow \Ext^{2}_{\odi{X\cap H}}(\cL_{2 |H},\cL_{1|H}(\nu)))$$
we obtain
\begin{equation}\label{induction1} \chi_d(\cL _{21})(\nu)=\chi_d(\cL _{21})(\nu-1)+\chi_{d-1}(\cL _{21})(\nu)-\dim T_3.\end{equation}
It follows that for $d\ge 3$ we have
\begin{equation}\label{induction} \chi_d(\cL _{21})(\nu)\le \chi_d(\cL _{21})(\nu-1)+\chi_{d-1}(\cL _{21})(\nu)\end{equation}
with equality if $T_3=0$. Let us compute $\Ext^{i}_{\odi{X}}(\cL_2 ,\cL_1 (\nu-1))$ for $\nu \ge -\dim X$.  By Lemma \ref{ExtL2L1ExtS2cMR} we have
$\Ext^{i}_{\odi{X}}(\cL_2,\cL_1(\nu))\cong \  _{\nu} \! \Ext^{i+c}_R(S_{2c}M(c),R)$ for any  $ \nu \ge -\dim X$.
 Since by hypothesis $X$ is a general linear determinantal scheme, the complex associated to $S_{2c}M$ in \cite{eise}; Theorem A2.10 is a free resolution of $S_{2c}M$, see Remark \ref{pamsrem}(ii). In order to compute  $_{\nu} \! \Ext^{i+c}_R(S_{2c}M(c),R)$ we apply the functor $_\nu \! \Hom_R(-,R)$ to the resolution of $S_{2c}M(c)$. For  $-3\le \nu \le 0$, we get
{\small \begin{equation}\label{cd}
\begin{array}{cccccc}
\rightarrow _\nu \! \Hom_R(\wedge^cF\otimes S_cG(c),R) & \rightarrow &  _\nu \! \Hom_R(\wedge^{c+1}F\otimes S_{c-1}G(c),R) & \rightarrow &  _\nu \! \Hom_R(\wedge^{c+2}F\otimes S_{c-2}G(c),R) & \rightarrow \cdots \\ \\
  \downarrow \simeq & & \downarrow \simeq  & & \downarrow \simeq & \\ \\
  R_{\nu} ^{{t+c-1\choose c}^2} & & R_{\nu +1} ^{{t+c-1\choose c+1}{t+c-2\choose c-1}}& & R_{\nu +2} ^{{t+c-1\choose c+2}{t+c-3\choose c-2}}
\end{array}.
\end{equation} }
In particular,  $\Ext^{i}_{\odi{X}}(\cL_2,\cL_1(-3))=0$ for $i\le 2$ and $d\ge 3$  and the inequality (\ref{induction}) implies
$$\begin{array}{ccl}
\chi_d(\cL _{21})(-2) & \le & \chi_{d-1}(\cL _{21})(-2)\le \chi_{d-2}(\cL _{21})(-2)\le \cdots \le \chi_{2}(\cL _{21})(-2), \\
\chi_d(\cL _{21})(-1) & \le & \chi_{d}(\cL _{21})(-2)+ \chi_{d-1}(\cL _{21})(-1) \\
& \le & \chi_{d}(\cL _{21})(-2)+ \chi_{d-1}(\cL _{21})(-2)+\chi_{d-2}(\cL _{21})(-1) \\
& \le & \cdots \\
& \le & (d-2)\chi_{2}(\cL _{21})(-2)+ \chi_{2}(\cL _{21})(-1),
\end{array}
$$
and, similarly,
$$\begin{array}{ccl}
\chi_d(\cL _{21})(0) & \le & \chi_{d}(\cL _{21})(-1)+ \chi_{d-1}(\cL _{21})(0) \\
& \le & (d-2)\chi_{2}(\cL _{21})(-2)+ \chi_{2}(\cL _{21})(-1)+(d-3)\chi_{2}(\cL _{21})(-2)+\chi_{2}(\cL _{21})(-1)\\ & & +\chi_{d-2}(\cL _{21})(0)\\
& \le & \cdots
\\
& \le & {d-1\choose 2}\chi_{2}(\cL _{21})(-2)+ (d-2)\chi_{2}(\cL _{21})(-1)+ \chi_{2}(\cL _{21})(0).
\end{array}
$$
Let us now compute  $\chi_{2}(\cL _{21})(\nu)$ for $-2\le \nu \le 0$. For $n=c+2$, we have $\dim R_{\nu}=\dim K[x_0,\cdots ,x_n]_{\nu}={c+2+\nu\choose \nu}$. Moreover, the rightmost term in the full complex given by (\ref{cd}) is
\begin{equation}\label{lastterm}
_{\nu} \! \Hom_R(\wedge^{t+c-1}F\otimes S_{c+1-t}G(c),R)\cong R_{t-1+\nu}^{{t+c-1\choose t+c-1}{c\choose t-1}},
\end{equation}
whence  we use that the sum defining $f(t,c)$, $g(t,c)$ and $h(t,c)$ are precisely given as the alternating sum of the dimension of the terms in the full complex given by (\ref{cd}). Thus, it follows that
$$\chi_2(\cL _{21})(0)= f(t,c), \quad  \chi_2(\cL _{21})(-1)= g(t,c), \quad \chi_2(\cL _{21})(-2)= h(t,c) $$
since $_{\nu} \! \Ext^{c+i}_R(S_{2c}M(c),R)\cong \Ext^{i}_{\odi{X}}(\cL_2,\cL_1(\nu))=0$ for $i\ge 3$ and $\nu\ge -\dim X=-2$ by  Lemma \ref{ExtL2L1ExtS2cMR}.

Putting altogether we get  $$\chi_d(\cL _{21})(0)\le {d-1\choose 2}h(t,c)+(d-2)g(t,c)+f(t,c),$$
 $$\chi_d(\cL _{21})(-1)\le (d-2)h(t,c)+g(t,c)$$ and
  $$\chi_d(\cL _{21})(-2)\le h(t,c)$$ with equalities for $d=2$. We simplify the formulas and we get  $\Ext^1 _{\odi{X}}(\cL_2,\cL_1)\ne 0$ provided $${d-1\choose 2}(c^2-c)(t-1)(t-2)+(d-2)c(ct-2-4c-t)(t-1)+(4+12c+8c^2-5ct-7c^2t-ct^2+c^2t^2)<0.$$
Finally, it remains to prove that for $2\le t\le 3$ and $d\ge 3$ the above inequalities turns out to be equalities.  In these cases, the complex (\ref{cd}) consists of 2 or 3 terms  (cf. (\ref{lastterm}))
and $_{\nu} \! \Ext^{c+i}_R(S_{2c}M(c),R)=0$ for $i\ge 2$ (resp. $i\ge 3$) for $t=2$ (resp. $t=3$). Using once more Lemma \ref{ExtL2L1ExtS2cMR} for $-3\le \nu \le 0$, we get $T_3=0$ and we conclude by (\ref{induction}).
\end{proof}

\begin{remark}
\label{ext1(L2,L1)t=2,3}  Let $X\subset \PP^n$ be a general  linear standard determinantal scheme of codimension $c$ defined by the maximal minors of a $t\times (t+c-1)$ matrix. Assume  $c, n-c\ge 2$. It holds: \begin{itemize} \item[(i)] if $t=2$, then $\chi_d(\cL _{21})(0)= c+1-c(n-c)<0$; and
\item[(ii)] if $t=3$, then $\chi_d(\cL _{21})(0)={c\choose 2}{n+2\choose 2}-(c+2){c+1\choose 2}(n+1)+{c+2\choose 2}^2.$
\end{itemize}
\end{remark}

In the following table we illustrate  some values of $t,c,n-c\ge 2$ for which  $\Ext^1 _{\odi{X}}(\cL_2,\cL_1)\ne 0$.

$$
\begin{tabular}{c|c|c}
$t$ & $c$ & $d=n-c$ \\
\hline
2 & any & any \\
3 & 2 & $d \le $ 16 \\
3 & 3 & $d \le $ 10 \\
3 & $c \le 4$ & 8 \\
3 & $c \le 5$ & 7 \\
3 & $c \le 8$ & 6 \\
3 & $c \le 26$ & 5 \\
3 & any & $d\le 4$ \\
4 & $c\le 23$ & 3 \\
5 & $c\le 5$ & 3 \\
6 & $c\le 3$ & 3 \\
$t\le 9$ & 2 & 3 \\
4 & $c\le 5$ & 4 \\
\end{tabular}
\qquad \qquad
\begin{tabular}{c|c|c}
$t$ & $c$ & $d=n-c$ \\
\hline
5 & $c\le 3$ & 4 \\
6 & 2 & 4 \\
4 & $c\le 3$ & 5 \\
5 & 2 & 5 \\
4 & 2 & $d\le 8$  \\
$t\le 17$ & 2 & 2 \\
$t\le 11$ & 3 & 2 \\
$t\le 9$ & 4 & 2 \\
$t\le 8$ & 5 & 2 \\
$t\le 7$ & $c\le 8$ & 2 \\
$t\le 6$ & $c\le 26$ & 2 \\
$t\le 5$ & any & 2 \\
\end{tabular}
$$

\vskip 2mm
The above table shows a list of triples $(t,c,d)$ for which  the inequality
$\dim \Ext^{1}_{\odi{X}}(\cL_2,\cL_1)> 0$ holds. We can also check that $\dim
\Ext^{1}_{\odi{X}}(\cL_2,\cL_1) > 2$ unless $(t,c,d)\in \{ (2,2,2),(2,3,2),(3,2,16) \} $,
 since the polynomial $\chi_d(\cL _{21})(0)$ of Theorem
\ref{XdL21} takes the values $$\begin{cases} -1 & \text{ for } (t,c,d)=(2,2,2) \\-2 & \text{ for } (t,c,d)=(2,3,2)\\-2 & \text{ for } (t,c,d)=(3,2,16).
\end{cases}$$

\vskip 2mm
In the next Proposition we will deal with the case of curves and we will assume that $t>2$. This assumption is not at all restrictive since the case $t=1$ corresponds to $X=\PP^1$ and the case $t=2$ corresponds to a rational normal curve in $\PP^n$ both are varieties of finite representation type.

\begin{proposition}\label{casecurves} Let $X\subset \PP^n$, $n\ge 3$, be a general linear determinantal curve defined by the maximal minors of a $t\times (t+c-1)$ matrix $\cA$.  If  $t\ge 3$  then $$\dim \Ext^{1}_{\odi{X}}(\cL_2,\cL_1)\ge \frac{-1}{6}t (5 + 3t - 2 t^2)\ge 2.$$
\end{proposition}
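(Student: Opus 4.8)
The plan is to reduce $\Ext^1_{\odi{X}}(\cL_2,\cL_1)$ to an Euler characteristic that can be read off from the surface case of Theorem~\ref{XdL21}. First I would observe that $X$ is a \emph{smooth} curve: since $a_j=1>0=b_i$ for all $i,j$, Remark~\ref{pamsrem}(i) gives $\codim_X\mathrm{Sing}(X)\ge 3>1=\dim X$, so $\mathrm{Sing}(X)=\emptyset$ and $\cL_1,\cL_2$ are line bundles. In particular $\ext^i_{\odi{X}}(\cL_2,\cL_1)=\Hl^i(X,\cL_2^{\vee}\otimes\cL_1)=0$ for $i\ge 2$, and Lemma~\ref{HomL1L2}(2) (applicable because $\depth_{I(Z)}A\ge 2$) gives $\ho_{\odi{X}}(\cL_2,\cL_1)=0$. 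Hence, writing $\chi_1(\cL_{21})(0):=\ho_{\odi{X}}(\cL_2,\cL_1)-\ext^1_{\odi{X}}(\cL_2,\cL_1)$,
\[
\ext^1_{\odi{X}}(\cL_2,\cL_1)=-\chi_1(\cL_{21})(0).
\]

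Next I would degenerate to the surface case. Lifting the entries of $\cA$ to linear forms in one extra variable yields a general linear determinantal surface $X'\subset\PP^{n+1}$ of codimension $c$ with $X=X'\cap H$; as in the curve case $X'$ is smooth, and the modules $M,S_cM$ of $X'$ restrict to those of $X$, so that $\mathcal{M}:=\cL_2^{\vee}\otimes\cL_1$ on $X'$ restricts to $\cL_2^{\vee}\otimes\cL_1$ on $X$. Tensoring $0\to\odi{X'}(-1)\to\odi{X'}\to\odi{X}\to 0$ with $\mathcal{M}$ and using additivity of the Euler characteristic yields
\[
\chi_1(\cL_{21})(0)=\chi(X,\mathcal{M}|_X)=\chi(X',\mathcal{M})-\chi(X',\mathcal{M}(-1))=\chi_2(\cL_{21})(0)-\chi_2(\cL_{21})(-1).
\]
By the equality clause of Theorem~\ref{XdL21} for $d=2$ one has $\chi_2(\cL_{21})(0)=f(t,c)$ and $\chi_2(\cL_{21})(-1)=g(t,c)$, so that $\ext^1_{\odi{X}}(\cL_2,\cL_1)=g(t,c)-f(t,c)$.

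It remains to bound $g(t,c)-f(t,c)$ from below for $c=n-1\ge 2$ and $t\ge 3$. Using the closed forms in (\ref{fgh}) the difference simplifies (routinely) to
\[
g(t,c)-f(t,c)=\frac{(c+t-1)!\,\bigl((t-2)c-1\bigr)}{(c+1)!\,(t-1)!}.
\]
For $t\ge 3$ and $c\ge 2$ both $(t-2)c-1\ge 2t-5\ge 1$ and $\frac{(c+t-1)!}{(c+1)!}=(c+2)(c+3)\cdots(c+t-1)$ are positive and nondecreasing in $c$, so the right-hand side is nondecreasing in $c$ and is therefore bounded below by its value at $c=2$, which is $\frac{t(t+1)(2t-5)}{6}=-\frac16 t(5+3t-2t^2)$. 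Finally, $6t^2-6t-5>0$ for $t\ge 3$ shows this cubic is increasing in $t$, hence $\ge 2$ (its value at $t=3$).

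The step that will require the most care is the degeneration bookkeeping: I must check that the curve's $\cL_1,\cL_2$ really are the restrictions of the surface's $\cL_1,\cL_2$ (so that $\chi(X,\mathcal{M}|_X)=\chi_1(\cL_{21})(0)$ is legitimate) and that $X'$ may be taken general enough for the $d=2$ equalities of Theorem~\ref{XdL21} to apply; by contrast, the simplification of $g-f$ and the two monotonicity arguments are routine once (\ref{fgh}) is available.
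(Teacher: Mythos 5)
Your proof is correct, but it takes a genuinely different route from the paper's. The paper argues directly on the smooth curve via Riemann--Roch: it writes $\cL_1\otimes\cL_2^{\vee}=\odi{X}(-nY+(n+t-1)H)$, computes its degree from the intersection numbers $YH^{n-2}=\binom{t+n-1}{n}$ and $H^{n-1}=\binom{t+n-2}{n-1}$, subtracts the arithmetic genus $p_a(X)=\sum_{i=1}^{t-1}(i-1)\binom{n+i-2}{i}$, and bounds the resulting $\chi$ above by $\frac{1}{6}t(5+3t-2t^2)$. You instead realize $X$ as a general hyperplane section of a general determinantal surface $X'\subset\PP^{n+1}$ and extract $\ext^1=g(t,c)-f(t,c)$ from the $d=2$ equality case of Theorem \ref{XdL21} via additivity of the Euler characteristic; your closed form $\frac{(c+t-1)!\,((t-2)c-1)}{(c+1)!\,(t-1)!}$ with $c=n-1$ agrees exactly with (minus) the paper's $\chi$, and your two monotonicity arguments correctly pin down the minimum at $(c,t)=(2,3)$, giving the bound $2$. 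What your approach buys is that it recycles the already-computed polynomials $f$ and $g$ and avoids the genus and intersection-number inputs; what it costs is the bookkeeping you yourself flag, namely that $\cL_1,\cL_2$ on the curve are the restrictions of the corresponding sheaves on $X'$ (this holds because $S_jM$ and $M^{\vee}$ commute with a general hyperplane section, the same fact the paper uses implicitly in the exact sequence (\ref{esequence})), and that a general curve is indeed a hyperplane section of a general surface (clear, since a general matrix of linear forms in $n+2$ variables restricts to a general one in $n+1$ variables). One small remark: the vanishing $\Hom_{\odi{X}}(\cL_2,\cL_1)=0$ from Lemma \ref{HomL1L2} is not actually needed for the stated lower bound, since $\h^0\ge 0$ already gives $\ext^1\ge -\chi$; the paper's proof exploits exactly this and never invokes the $\Hom$-vanishing.
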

\begin{proof} Since a general linear determinantal curve is smooth, $\cL_1$ and $\cL_2$ are line bundles and we have $\Ext^{1}_{\odi{X}}(\cL_2,\cL_1)=\Hl^1(X,\cL_1\otimes \cL_2^{\vee})$ and $\chi(\cL_1\otimes \cL_2^{\vee})=\h^0(X,\cL_1\otimes \cL_2^{\vee})-\h^1(X,\cL_1\otimes \cL_2^{\vee})$. Therefore, it is enough to check that $\chi(\cL_1\otimes \cL_2^{\vee})\le \frac{1}{6}t (5 + 3t - 2 t^2) \le -2.$
To this end, we will apply Riemann-Roch theorem. Since $p_a(X)=\sum  _{i=1}^{t-1}(i-1){n+i-2\choose i}$ (cf. \cite{Ha}), $YH^{n-2}={t+n-1\choose n}$ and $H^{n-1}={t+n-2\choose n-1}$ (cf. (\ref{degree})),  we have
$$\begin{array}{ccl} \chi(\cL_1\otimes \cL_2^{\vee}) & = & \chi (\odi{X}(-nY+(n+t-1)H)\\ & = & \de (\odi{X}(-nY+(n+t-1)H) +1-p_a(X) \\ & = & -n{t+n-1\choose n}+(n+t-1){t+n-2\choose n-1} +1-\sum  _{i=1}^{t-1}(i-1){n+i-2\choose i} \\ & =  &((-1 - n (-2 + t) + t) (n + t-2)!/(n!{\cdot} (t-1)!)
  \\& \le  & \frac{1}{6}t (5 + 3t - 2 t^2) \end{array} $$ which proves what we want.
\end{proof}

Finally we consider the case $c=1$ and $\dim X\ge 2$. Since $L_2=S_cM=M$ and $L_1=M^{\vee}(t-1)$, we have
$$\Ext^{i}_{\odi{X}}(\cL_2,\cL_1)\cong \ _{0} \! \Ext^{1+i}_R(S_{2}M(c),R(-1)) \text{ for } i=0,1$$
by Lemma \ref{ExtSiMSjMExtS2i-j-1+cMR}  because $\ell =\sum_{j=1}^ta_j-\sum_{i=1}^tb_i=t$. This implies

\begin{proposition} \label{$c=1$}
Let $X\subset \PP^n$ be a general linear determinantal hypersurface ($c=1$) defined by the determinant of a $t\times t$ matrix. Assume $n-1\ge 2$. Then $\Hom(\cL_2,\cL_1)=\Hom(\cL_1,\cL_2)=0$ and $\dim \Ext^{1}_{\odi{X}}(\cL_2,\cL_1)={t\choose 2}(n+1)-t^2.$ In particular if $t>2$, then $\dim \Ext^{1}_{\odi{X}}(\cL_2,\cL_1)\ge 3$, and if $t=2$, then $\dim \Ext^{1}_{\odi{X}}(\cL_2,\cL_1)=n-3$.
\end{proposition}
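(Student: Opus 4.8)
The plan is to combine the isomorphism $\Ext^{i}_{\odi{X}}(\cL_2,\cL_1)\cong {}_0\Ext^{1+i}_R(S_2M,R(-1))$ for $i=0,1$ — which is Lemma~\ref{ExtSiMSjMExtS2i-j-1+cMR} read with $c=1$, $j=1$, $\mu=t-1$ and $\ell=t$ — with a single free resolution of $S_2M$ and an Euler characteristic count. In the linear case one has $a_j=1>0=b_i$ and $\dim X=n-1\ge 2$, so Remark~\ref{pamsrem}(i) gives $\depth_{I(Z)}A\ge 3$; hence Lemma~\ref{HomL1L2} applies and immediately yields $\Hom_{\odi{X}}(\cL_1,\cL_2)=\Hom_{\odi{X}}(\cL_2,\cL_1)=0$, which settles the first assertion.

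It remains to compute $\dim\Ext^1_{\odi{X}}(\cL_2,\cL_1)={}_0\Ext^{2}_R(S_2M,R(-1))$. Here I would use that $\pd S_2M=c+1=2$ for a general $\cA$ (Remark~\ref{pamsrem}(ii)), so that the complex ${\cC}_2(\varphi)$ of Proposition~\ref{resol},
$$0\longrightarrow \wedge^2F\longrightarrow F\otimes G\longrightarrow S_2G\longrightarrow S_2M\longrightarrow 0,$$
is a free resolution, which in the linear case $F=R(-1)^t$, $G=R^t$ becomes
$$0\longrightarrow R(-2)^{{t\choose 2}}\longrightarrow R(-1)^{t^2}\longrightarrow R^{{t+1\choose 2}}\longrightarrow S_2M\longrightarrow 0.$$
Applying $\Hom_R(-,R(-1))$ and passing to degree zero yields the complex of $K$-vector spaces
$$0\longrightarrow (R_{-1})^{{t+1\choose 2}}\longrightarrow (R_0)^{t^2}\longrightarrow (R_1)^{{t\choose 2}}\longrightarrow 0,$$
that is $0\to 0\to K^{t^2}\to K^{(n+1){t\choose 2}}\to 0$, whose cohomology in cohomological degrees $0,1,2$ computes ${}_0\Ext^{0}_R(S_2M,R(-1))$, $\Hom_{\odi{X}}(\cL_2,\cL_1)$ and $\Ext^1_{\odi{X}}(\cL_2,\cL_1)$ (there is no higher $\Ext$ since the resolution has length $2$). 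The leftmost term vanishes, so equating the alternating sum of the dimensions of the three terms with the alternating sum of the dimensions of the cohomology gives $-\dim\Hom_{\odi{X}}(\cL_2,\cL_1)+\dim\Ext^1_{\odi{X}}(\cL_2,\cL_1)=-t^2+{t\choose 2}(n+1)$. Feeding in $\Hom_{\odi{X}}(\cL_2,\cL_1)=0$ from the first step gives $\dim\Ext^1_{\odi{X}}(\cL_2,\cL_1)={t\choose 2}(n+1)-t^2$.

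The numerical consequences are then immediate: for $t=2$ the formula reads $1\cdot(n+1)-4=n-3$, and for $t\ge 3$ (recall $n\ge 3$) one has ${t\choose 2}(n+1)-t^2\ge 2t(t-1)-t^2=t(t-2)\ge 3$. The pleasant feature of this route is that the Euler characteristic collapses the whole computation to the single input $\Hom_{\odi{X}}(\cL_2,\cL_1)=0$, so I avoid any explicit maximal-rank analysis. Accordingly the only (mild) obstacle I anticipate is keeping the twists straight in the dualized resolution — in particular verifying that the leftmost dual term $R(-1)^{{t+1\choose 2}}$ has vanishing degree-zero part, which forces ${}_0\Ext^0_R(S_2M,R(-1))=0$ and makes the count collapse to the stated formula. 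Should one prefer not to cite Lemma~\ref{HomL1L2}, the same conclusion follows by showing directly that the degree-zero part of the dual differential $(R_0)^{t^2}\to (R_1)^{{t\choose 2}}$ has maximal rank for a general $\cA$, exactly as in Propositions~\ref{t=2} and~\ref{t=3}; this is where genericity of $\cA$ enters, and it is dimensionally unobstructed here since $t^2\le {t\choose 2}(n+1)$ for $n\ge 3$, $t\ge 2$.
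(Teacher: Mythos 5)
Your proposal is correct and follows essentially the same route as the paper: both use Lemma~\ref{HomL1L2} for the vanishing of the $\Hom$-groups, dualize the length-two resolution $0\to\wedge^2F\to F\otimes G\to S_2G\to S_2M\to 0$ against $R(-1)$ in degree zero, observe that the leftmost dual term vanishes, and feed $\Hom_{\odi{X}}(\cL_2,\cL_1)\cong{}_0\Ext^1_R(S_2M,R(-1))=0$ into the count to get $\dim{}_0\Ext^2_R(S_2M,R(-1))={t\choose 2}(n+1)-t^2$. Phrasing the final step as an Euler-characteristic identity rather than as the short exact sequence $0\to R_0^{t^2}\to R_1^{{t\choose 2}}\to{}_0\Ext^2_R(S_2M,R(-1))\to 0$ is only a cosmetic difference.
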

\begin{proof} Using Lemma \ref{HomL1L2}, we see that the $\Hom$-groups vanish. Now we argue as in the proof of Proposition
\ref{HomM-Mv} and we use the resolution
$$0\longrightarrow \wedge ^2F\longrightarrow F\otimes G\longrightarrow S_2G \longrightarrow S_2M \longrightarrow 0$$
and $\Hom(\cL_2,\cL_1)\cong \ _{0} \! \Ext^{1}_R(S_{2}M,R(-1))=0$ to get the exact sequence
$$0\longrightarrow (F^*\otimes G^*)(-1)_0\longrightarrow (\wedge^2F^*(-1))_0\longrightarrow \ _{0} \! \Ext^{2}_R(S_{2}M,R(-1))\longrightarrow 0$$
because $(S_2G)(-1)_0=R(-1)_0^{\beta }=0$ for some $\beta >0$. The exact sequence yields the following exact sequence
$$0\longrightarrow R_0^{t^2}\longrightarrow  R_1^{{t\choose 2}}\longrightarrow  \  _{0} \! \Ext^{2}_R(S_{2}M,R(-1))\longrightarrow 0,$$
cf. (\ref{cd}) and recall $\wedge ^{c+2}F=0$. Therefore,  we easily get the conclusions of the proposition.
\end{proof}


\section{Linear determinantal varieties of Ulrich wild representation type}

The goal of this section is to prove that under some numerical assumptions on
$n$, $c$ and $t$, general linear determinantal varieties will be examples of
varieties of arbitrary dimension for which wild representation type is
witnessed by means of Ulrich sheaves. In other words, they will have Ulrich
wild representation type.

\begin{construction}\label{construction} \rm With $\cL_i$ as in
  Notation~\ref{nota} we set $s_{21}:=\dim \Ext^1_{\odi{X}}(\cL_2,\cL_1)$. We
  assume $\dim X\ge 1$ and $s_{21}>0$, and we take $r-1\le s_{21}$
  $K$-linearly independent extensions $e_1, \cdots ,e_{r-1}\in
  \Ext^1_{\odi{X}}(\cL_2,\cL_1)$. We construct a rank $r$ sheaf $\shE$ on $X$
  sitting into the exact sequence:
\begin{equation}\label{a1}
  0\longrightarrow \cL_1 \longrightarrow \shE \longrightarrow \cL_2^{r-1} \longrightarrow 0 \end{equation} where $e_i$ is given by pullback of $ \shE \rightarrow \cL_2^{r-1}$ via the natural map $ \cL_2 \rightarrow \cL_2^{r-1}$ that takes an element onto its $i$-th coordinate. By construction $\shE$
is a rank $r$ initialized
($\Hl ^0 (X,\shE (-H))=0$) sheaf. Let us check that $\shE $
is an Ulrich sheaf. Since $\cL_1$ and $\cL_2$ are ACM, $\shE $ is also ACM
and $\shE $ is Ulrich because $\h^0 (\shE )=\h^0 (\cL_1)+\h^0(\cL_2)=
(r-1)\de (X)+\de (X)=\rk \shE {\cdot}\de X$. Moreover as in Construction \ref{construction1} one may check that   $\shE $ is indecomposable.
\end{construction}

\vskip 2mm Let $\shF _r$ be the irreducible family of rank $r$ indecomposable Ulrich  sheaves on $X$ given by (\ref{a1}). By construction $\shF _r\cong Gr(r-1, \Ext^1(\cL_2,\cL_1))$ being $Gr(r,V)$ the Grassmannian which parameterizes $r$-dimensional linear subspaces of V and $\dim \shF _r= (r-1)(s_{21}-r+1).$

Finally, it is worthwhile to point out that any $\shE\in \shF _r$ is $\mu$-semistable since any Ulrich sheaf is $\mu$-semistable but it is not $\mu$-stable because $\mu(\cL_1)=c_1(\cL_1)H=\frac{c_1(\shE)H}{2}=\mu(\shE)$.

\begin{proposition}\label{ext1E1E2bis} We keep the above notation and we consider $\shE _{i}$ the rank 2 Ulrich sheaves given by non-splitting extensions
\begin{equation}\label{extension}  e_{i}: \ \  0\longrightarrow \cL_1 \longrightarrow \shE _{i}\longrightarrow \cL_2\longrightarrow 0 \ \ \text{ for } \ i=1,2.\end{equation} Assume that $\dim X \ge 2$. Then, it holds:
$$\ext^1(\shE _1,\shE _2)\ge \ext^1(\cL_2,\cL_1)-2.$$
\end{proposition}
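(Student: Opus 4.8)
The plan is to mirror the argument of Proposition \ref{ext1E1E2}, replacing the pair $(\widetilde{M}^{\vee }(t-\mu),\widetilde{M})$ by the pair of Ulrich line bundles $(\cL_1,\cL_2)$ and invoking Lemma \ref{HomL1L2} in place of Proposition \ref{HomM-Mv}. First I would record that, under the standing generality and linearity assumptions, the hypothesis $\dim X\ge 2$ forces $\depth _{I(Z)}A\ge 3$ by Remark \ref{pamsrem}(i), so that the comparison isomorphism (\ref{NM}) is available with $r=2$. Since $\cL_1$ is invertible on $U=X\setminus Z$, we have $\shH om_{\odi{X}}(\cL_1,\cL_1)\arrowvert_U\cong \odi{U}$, and hence $\Ext^1_{\odi{X}}(\cL_1,\cL_1)\cong \Hl^1(U,\odi{X})=0$; this vanishing plays exactly the role that $\Ext^1(\widetilde{M}^{\vee }(t-\mu),\widetilde{M}^{\vee }(t-\mu))=0$ played before.

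Next I would apply $\Hom(-,\cL_1)$ to each extension $e_i$. Because $\Hom(\cL_2,\cL_1)=0$ by Lemma \ref{HomL1L2}(2) and $\Ext^1(\cL_1,\cL_1)=0$, the long exact sequence collapses to
$$0\longrightarrow \Hom(\shE_i,\cL_1)\longrightarrow \Hom(\cL_1,\cL_1)\cong K\stackrel{1\mapsto e_i}{\longrightarrow}\Ext^1(\cL_2,\cL_1)\longrightarrow \Ext^1(\shE_i,\cL_1)\longrightarrow 0.$$
Since the extensions are non-splitting, $e_i\ne 0$ and the connecting map is injective, giving $\Hom(\shE_i,\cL_1)=0$ and $\ext^1(\shE_i,\cL_1)=\ext^1(\cL_2,\cL_1)-1$ for $i=1,2$. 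Then, applying $\Hom(-,\cL_2)$ to $e_1$ and using $\Hom(\cL_1,\cL_2)=0$ from Lemma \ref{HomL1L2}(1), I obtain $\Hom(\shE_1,\cL_2)\cong \Hom(\cL_2,\cL_2)\cong K$, just as in Proposition \ref{ext1E1E2}.

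Finally I would apply $\Hom(\shE_1,-)$ to $e_2$, producing
$$0\longrightarrow \Hom(\shE_1,\cL_1)\longrightarrow \Hom(\shE_1,\shE_2)\longrightarrow \Hom(\shE_1,\cL_2)\longrightarrow \Ext^1(\shE_1,\cL_1)\longrightarrow \Ext^1(\shE_1,\shE_2).$$
Feeding in $\Hom(\shE_1,\cL_1)=0$, $\hom(\shE_1,\cL_2)=1$, and $\ext^1(\shE_1,\cL_1)=\ext^1(\cL_2,\cL_1)-1$, exactness shows that the image of $\Ext^1(\shE_1,\cL_1)$ in $\Ext^1(\shE_1,\shE_2)$ is the cokernel of a map out of a one-dimensional space, hence has dimension at least $(\ext^1(\cL_2,\cL_1)-1)-1$. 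Therefore $\ext^1(\shE_1,\shE_2)\ge \ext^1(\cL_2,\cL_1)-2$, as desired.

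I expect the only delicate points to be bookkeeping ones: verifying that $\dim X\ge 2$ genuinely delivers $\depth _{I(Z)}A\ge 3$ through Remark \ref{pamsrem}(i) (so that the pivotal vanishing $\Ext^1(\cL_1,\cL_1)=0$ and the identifications of the $\Hom$-groups via (\ref{NM}) are legitimate), and confirming $\Hom(\cL_2,\cL_2)\cong K$, which comes from connectedness of $X$ together with $\cL_2$ being a rank one reflexive sheaf that is invertible on $U$. Once these are secured, the remaining argument is a purely formal diagram chase identical in shape to the proof of Proposition \ref{ext1E1E2}.
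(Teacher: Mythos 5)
Your proposal is correct and follows essentially the same route as the paper: the authors simply state that the proof is analogous to that of Proposition \ref{ext1E1E2} (with $\dim X\ge 2$ supplying $\depth_{I(Z)}A\ge 3$ via Remark \ref{pamsrem}(i)) and omit the details, which are exactly the three long exact sequences you write down, with Lemma \ref{HomL1L2} replacing Proposition \ref{HomM-Mv} and the key vanishing $\Ext^1(\cL_1,\cL_1)\cong \Hl^1(U,\odi{U})=0$ playing the role of $\Ext^1(\widetilde{M}^{\vee}(t-\mu),\widetilde{M}^{\vee}(t-\mu))=0$. The bookkeeping and the final dimension count are all as in the paper's model argument.
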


\begin{proof} It is analogous to the proof of Proposition \ref{ext1E1E2} since $\dim X\ge 2$ implies $\depth _{I(Z)}A\ge 3$ by Remark \ref{pamsrem}(i) and we omit it.
\end{proof}

\vskip 2mm
For the case of linear determinantal curves we have:

\begin{proposition}\label{ext1E1E2Curves} Let $X\subset \PP^n$, $n\ge 3$, be a general linear determinantal curve defined by the maximal minors of a $t\times (t+c-1)$ matrix $\cA$.
 We consider $\shE _{i}$ the rank 2 Ulrich bundles given by non-splitting extensions
\begin{equation}\label{extension1}  e_{i}: \ \  0\longrightarrow \cL_1 \longrightarrow \shE _{i}\longrightarrow \cL_2\longrightarrow 0 \ \ \text{ for } \ i=1,2.\end{equation} If  $t\ge 3$ then, it holds:
$$\ext^1(\shE _1,\shE _2)\ge  \ext^1(\cL_2,\cL_1)+p_a(X)-2\ge 3.$$
\end{proposition}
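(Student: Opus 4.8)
The plan is to reproduce the argument of Proposition~\ref{ext1E1E2} almost verbatim, the one essential difference being that on a curve the self-extension groups $\Ext^1(\cL_i,\cL_i)$ no longer vanish: they contribute the arithmetic genus, and this is exactly where the summand $p_a(X)$ in the statement comes from. Since a general linear determinantal curve is smooth, $\cL_1$ and $\cL_2$ are line bundles, so $\hom(\cL_i,\cL_i)=\h^0(\odi{X})=1$ and $\Ext^1(\cL_i,\cL_i)\cong \Hl^1(X,\odi{X})$, whence $\ext^1(\cL_i,\cL_i)=p_a(X)$; moreover $\Hom(\cL_1,\cL_2)=\Hom(\cL_2,\cL_1)=0$ by Lemma~\ref{HomL1L2} (applicable since $\dim X=1$ forces $\depth_{I(Z)}A\ge 2$).

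First I would apply $\Hom(-,\cL_1)$ to the extension $e_1$ defining $\shE_1$. Because $\Hom(\cL_2,\cL_1)=0$, the connecting map $\Hom(\cL_1,\cL_1)=K\to \Ext^1(\cL_2,\cL_1)$ sends $1\mapsto e_1\neq 0$ and is therefore injective; hence $\Hom(\shE_1,\cL_1)=0$ and the long exact sequence collapses to
\[ 0\to \coker(e_1)\to \Ext^1(\shE_1,\cL_1)\to \Ext^1(\cL_1,\cL_1)\to 0, \]
giving $\ext^1(\shE_1,\cL_1)=\ext^1(\cL_2,\cL_1)-1+p_a(X)$. Next I would apply $\Hom(-,\cL_2)$ to $e_1$; since $\Hom(\cL_1,\cL_2)=0$ and $\Hom(\cL_2,\cL_2)=K$, this yields $\hom(\shE_1,\cL_2)=1$.

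Finally I would apply $\Hom(\shE_1,-)$ to the extension $e_2$ defining $\shE_2$. The relevant piece
\[ \Hom(\shE_1,\cL_2)\xrightarrow{\ \partial\ } \Ext^1(\shE_1,\cL_1)\to \Ext^1(\shE_1,\shE_2) \]
shows that $\ext^1(\shE_1,\shE_2)\ge \ext^1(\shE_1,\cL_1)-\hom(\shE_1,\cL_2)=\ext^1(\cL_2,\cL_1)+p_a(X)-2$, which is the first asserted inequality. To obtain the bound $\ge 3$ I would combine Proposition~\ref{casecurves}, which gives $\ext^1(\cL_2,\cL_1)\ge 2$ for $t\ge 3$, with the estimate $p_a(X)\ge 3$: indeed, for $t\ge 3$ the formula $p_a(X)=\sum_{i=1}^{t-1}(i-1)\binom{n+i-2}{i}$ already contains the $i=2$ term $\binom{n}{2}\ge \binom{3}{2}=3$ (all terms being nonnegative). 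Hence $\ext^1(\cL_2,\cL_1)+p_a(X)-2\ge 2+3-2=3$.

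All the computations are routine; the only two points requiring care are the injectivity of the connecting map $K\to \Ext^1(\cL_2,\cL_1)$, which is precisely where the nonsplitting hypothesis on $e_1$ is used, and bookkeeping of the self-extension groups so as not to discard the $p_a(X)$ contribution that distinguishes this curve case from the higher-dimensional Proposition~\ref{ext1E1E2bis}.
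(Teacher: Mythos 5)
Your proposal is correct and follows essentially the same route as the paper's proof: both apply $\Hom(-,\cL_1)$ and $\Hom(-,\cL_2)$ to the defining extension, use the non-splitting of $e_i$ together with Lemma~\ref{HomL1L2} to obtain $\Hom(\shE_i,\cL_1)=0$, $\ext^1(\shE_i,\cL_1)=\ext^1(\cL_2,\cL_1)+p_a(X)-1$ and $\hom(\shE_1,\cL_2)=1$, and then apply $\Hom(\shE_1,-)$ to the second extension to extract the bound, concluding with Proposition~\ref{casecurves} and $p_a(X)\ge 3$. The only cosmetic difference is that you use a three-term piece of the last long exact sequence where the paper uses the full alternating sum; the resulting inequality is identical.
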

\begin{proof} Since a general linear determinantal curve $X\subset \PP^n$, $n\ge 3$, is smooth we have  $$\ext^1(\cL_1,\cL_1)\cong \h^1(X, \odi{X})=p_a(X)=\sum  _{i=1}^{t-1}(i-1){n+i-2\choose i}$$ where the last equality follows from \cite{Ha}; pg. 196.
We apply $\Hom(-,\cL_1)$ to the exact sequences (\ref{extension1}),  $i=1,2$, and we get the exact sequences:
$$\begin{array}{cccccccccc}0\rightarrow & \Hom(\cL_2,\cL_1) & \rightarrow & \Hom(\shE _{i},\cL_1) & \rightarrow &
\Hom(\cL_1,\cL_1)\cong K & \rightarrow & \Ext^1(\cL_2,\cL_1) & \rightarrow &
\\ & & & &  &
1 & \mapsto & e_{i} \end{array}$$
$$\rightarrow \Ext^1(\shE _{i},\cL_1)\rightarrow \Ext^1(\cL_1,\cL_1)\rightarrow 0.$$
Using them together with Lemma \ref{HomL1L2}, it follows that \begin{equation}\label{aux1} 0=\Hom(\cL_2,\cL_1)\cong \Hom(\shE _{i},\cL_1), \ \text{ for } \ i=1,2; \text{ and} \end{equation} \begin{equation}\label{aux2}\ext^1(\shE _{i},\cL_1) = \ext^1(\cL_2,\cL_1)+\ext^1(\cL_2,\cL_1)-1= \ext^1(\cL_2,\cL_1)+p_a(X)-1\ \text{ for } \ i=1,2.\end{equation}
Applying the functor $\Hom(-,\cL_2)$ to the exact sequence (\ref{extension1}) and using Lemma \ref{HomL1L2}, we get the exact sequence:
$$0\rightarrow \Hom(\cL_2,\cL_2)\cong K \rightarrow  \Hom(\shE _{1},\cL_2)  \rightarrow
\Hom(\cL_1,\cL_2)=0$$ which allows us to deduce that
\begin{equation}\label{aux}
\Hom(\shE _{1},\cL_2)\cong \Hom(\cL_2,\cL_2)\cong K. \end{equation}
Now, we apply the functor $\Hom(\shE _1,-)$ to the exact sequence (\ref{extension1})
and we get the exact sequence:
$$0\rightarrow \Hom(\shE _1,\cL_1)  \rightarrow  \Hom(\shE _{1},\shE _2)  \rightarrow
\Hom(\shE _1,\cL_2) \rightarrow  \Ext^1(\shE _1,\cL_1)  \rightarrow  \Ext^1(\shE _{1},\shE _2)\rightarrow \Ext^1(\shE _1,\cL_2)\rightarrow 0.$$
Thus, using (\ref{aux1})-(\ref{aux}), we obtain
$$\begin{array}{rcl} \ext^1(\shE _{1},\shE _2) & = & -\hom(\shE _1,\cL_1)+\hom(\shE _{1},\shE _2)-\hom(\shE _1,\cL_2)+\ext^1(\shE _1,\cL_1)+ \ext^1(\shE _1,\cL_2) \\
& \ge & -\hom(\shE _1,\cL_1)+\hom(\shE _{1},\shE _2)-\hom(\shE _1,\cL_2)+\ext^1(\shE _1,\cL_1)
\\& = & \ext^1(\cL _2,\cL_1)-2+p_a(X)\ge 3
\end{array}$$
where the last inequality follows from the fact that for $t\ge 3$, $\ext^1(\cL _2,\cL_1)\ge 2$ (cf.  Proposition \ref{casecurves}) and $p_a(X)=\sum  _{i=1}^{t-1}(i-1){n+i-2\choose i}\ge 3$.
\end{proof}

We are now ready to state a theorem which together with Theorem \ref{XdL21}, Remark \ref{ext1(L2,L1)t=2,3} and Proposition \ref{casecurves} lead to the main results of this section.

\begin{theorem} \label{mainthm11} Let $X\subset \PP^n$ be a general linear
  standard determinantal scheme. Assume that $\dim X\ge 1$. If $\dim
  \Ext^{1}_{\odi{X}}(\cL _2,\cL _1)> 2$ then $X$ is of Ulrich wild
  representation type.
\end{theorem}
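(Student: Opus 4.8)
The plan is to mimic the proof of Theorem~\ref{Bigthm}, replacing the rank $2$ ACM sheaves produced there by the rank $2$ Ulrich sheaves coming from Construction~\ref{construction}. Write $s_{21}:=\dim\Ext^1_{\odi{X}}(\cL_2,\cL_1)$; since $s_{21}$ is an integer with $s_{21}>2$ we have $s_{21}\ge 3$. As $\dim X\ge 1$, Remark~\ref{pamsrem}(i) gives $\depth_{I(Z)}A\ge 2$, so Lemma~\ref{HomL1L2} applies and Construction~\ref{construction} with $r=2$ produces an irreducible family $\shF _2\cong \PP(\Ext^1_{\odi{X}}(\cL_2,\cL_1))$ of rank $2$ simple indecomposable Ulrich sheaves sitting in extensions $0\to\cL_1\to\shE\to\cL_2\to 0$, with $\dim\shF _2=s_{21}-1\ge 2$.

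First I would record the key vanishing: for two non-isomorphic members $\shE,\shE'\in\shF _2$ one has $\Hom(\shE,\shE')=\Hom(\shE',\shE)=0$. This is precisely the input required by \cite{PT}; Proposition 5.1.3 and is obtained exactly as in the proof of Theorem~\ref{Bigthm}. Then, given any $q>0$, I would fix an integer $p$ with $2p>q$ and choose $p$ pairwise non-isomorphic sheaves $\shE_1,\dots,\shE_p\in\shF _2$; these lie in the dense open subset $U\subset\shF _2\times\stackrel{p)}{\cdots}\times\shF _2$ of pairwise distinct tuples, which has $\dim U=p(s_{21}-1)$. Applying \cite{PT}; Proposition 5.1.3 yields a family $\shH _r$ of rank $r:=2p$ simple (hence indecomposable) sheaves $\shE$ given as iterated extensions
\[
0\longrightarrow \bigoplus_{i=1}^{p-1}\shE_i\longrightarrow \shE\longrightarrow \shE_p\longrightarrow 0,
\]
parameterized by a bundle over $U$ with fibre $\prod_{i=1}^{p-1}\PP(\Ext^1(\shE_p,\shE_i))$. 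Since each $\shE_i$ is Ulrich, so is every $\shE$, and the resulting $\shE$ are non-isomorphic for distinct parameters.

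It remains to estimate $\dim\shH _r$ and let it grow. The relevant extension groups are controlled by Proposition~\ref{ext1E1E2bis} when $\dim X\ge 2$, which gives $\ext^1(\shE_p,\shE_i)\ge s_{21}-2>0$; when $\dim X=1$ the same lower bound follows from Proposition~\ref{ext1E1E2Curves} (valid for $t\ge 3$, which is forced here, since the curve cases $t\le 2$ are $\PP^1$ or a rational normal curve, of finite representation type and thus incompatible with $s_{21}\ge 3$). Hence $\dim\PP(\Ext^1(\shE_p,\shE_i))\ge s_{21}-3\ge 0$, and
\[
\dim\shH _r \ge \dim U+(p-1)(s_{21}-3)= p(s_{21}-1)+(p-1)(s_{21}-3)\ge \tfrac{r}{2}(s_{21}-1).
\]
Because $s_{21}-1\ge 2>0$ and $r=2p$ may be taken arbitrarily large, the dimensions $\dim\shH _r$ are unbounded, so $X$ is of Ulrich wild representation type.

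The hard part will be the bookkeeping around \cite{PT}; Proposition 5.1.3: one must verify that its simplicity and $\Hom$-vanishing hypotheses hold for the chosen $\shE_i$ and that the sheaves $\shE$ obtained are genuinely non-isomorphic for distinct parameters, so that $\shH _r$ really is a family of the asserted dimension. Running in parallel with this is the separate treatment of the curve case $\dim X=1$, where Proposition~\ref{ext1E1E2bis} is unavailable and Proposition~\ref{ext1E1E2Curves} must be substituted.
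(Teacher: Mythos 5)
Your proposal is correct and follows essentially the same route as the paper: the authors' own proof of Theorem \ref{mainthm11} simply observes that $\dim\Ext^1_{\odi{X}}(\cL_2,\cL_1)>2$ forces $t\ge 3$ when $\dim X=1$, invokes the family $\shF_2$ of Construction \ref{construction} together with Propositions \ref{ext1E1E2bis} and \ref{ext1E1E2Curves}, and then declares the iterated-extension argument to be analogous to that of Theorem \ref{Bigthm}, which is exactly the argument you carry out (including the same dimension count $\dim\shH_r\ge \tfrac{r}{2}(s_{21}-1)$ via \cite{PT}; Proposition 5.1.3). Your write-up is in fact slightly more explicit than the paper's, but there is no substantive difference.
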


\begin{remark} \rm  According to Definition 2 and Corollary 1 in \cite{FPL}  we have proved a slightly stronger result. In fact, we  have proved that  with the above assumptions $X$ is strictly Ulrich wild. We thank the referee for pointing out this improvement.
\end{remark}

\begin{proof} First of all we observe that the hypothesis $\dim
  \Ext^{1}_{\odi{X}}(\cL _2,\cL _1)> 2$ implies $t\ge 3$ provided $\dim X=1$. Therefore, we can
  use the family ${\mathcal F}_2$ above and $\dim \Ext^1(\shE
  _1,\shE _2)>0$, see Propositions \ref{ext1E1E2bis}
  and \ref{ext1E1E2Curves}, to construct iterated extensions leading to large
  families of simple Ulrich sheaves similarly to what we did in Theorem
  \ref{Bigthm}. The proof is analogous to the proof of Theorem \ref{Bigthm}
  and we omit it.
\end{proof}

Summarizing, we have got

\begin{corollary}\label{maincoro} Let $X\subset \PP^n$ be a general linear standard determinantal scheme of codimension $c$ defined by the maximal minors of a $t\times (t+c-1)$ matrix with linear entries. Assume that one of the following conditions holds:
\begin{itemize}
\item[(1)] $(n-c,t)=(2,2)$ and $c > 3$; or
\item[(2)] $t=2$, $n-c>2$ and $c \ge 2$; or
\item[(3)] $t=2$, $n>5$ and $c =1$, or
\item[(4)] $t>2$, $n>2$ and $c=1$.
\end{itemize}
Then $X$ is of Ulrich wild representation type.
\end{corollary}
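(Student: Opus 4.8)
The plan is to deduce everything from Theorem~\ref{mainthm11}, which guarantees that $X$ is of Ulrich wild representation type as soon as $\dim X\ge 1$ and $\dim \Ext^1_{\odi{X}}(\cL_2,\cL_1)>2$. Since $\dim X=n-c\ge 1$ holds trivially in all four cases, the whole argument reduces to establishing the strict inequality $\dim \Ext^1_{\odi{X}}(\cL_2,\cL_1)>2$. I would treat the cases $c\ge 2$ (cases (1) and (2), both with $t=2$) and $c=1$ (cases (3) and (4)) separately.

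For cases (1) and (2) I would first use Lemma~\ref{HomL1L2} to record $\ext^0_{\odi{X}}(\cL_2,\cL_1)=0$, so that the definition of $\chi_d$ gives $\ext^1_{\odi{X}}(\cL_2,\cL_1)=\ext^2_{\odi{X}}(\cL_2,\cL_1)-\chi_d(\cL _{21})(0)\ge -\chi_d(\cL _{21})(0)$. Since $t=2$ and $c,n-c\ge 2$, Remark~\ref{ext1(L2,L1)t=2,3}(i) supplies the exact value $\chi_d(\cL _{21})(0)=c+1-c(n-c)$, whence $\dim \Ext^1_{\odi{X}}(\cL_2,\cL_1)\ge c(n-c)-c-1=c(n-c-1)-1$. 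In case (1) one has $n-c=2$ and $c>3$, so this bound equals $c-1\ge 3$; in case (2) one has $n-c>2$ and $c\ge 2$, so the bound is at least $2\cdot 2-1=3$. In either situation $\dim \Ext^1_{\odi{X}}(\cL_2,\cL_1)\ge 3>2$ and Theorem~\ref{mainthm11} applies.

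For cases (3) and (4), where $c=1$, I would invoke Proposition~\ref{$c=1$} directly; its hypothesis $n-1\ge 2$ is met because $n>2$. For $t=2$ it gives $\dim \Ext^1_{\odi{X}}(\cL_2,\cL_1)=n-3$, so the assumption $n>5$ of case (3) forces $n-3\ge 3>2$. For $t>2$ the same proposition yields $\dim \Ext^1_{\odi{X}}(\cL_2,\cL_1)={t\choose 2}(n+1)-t^2\ge 3>2$ by its concluding assertion, which covers case (4). In both cases $\dim X=n-1\ge 1$, so Theorem~\ref{mainthm11} again gives the conclusion.

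The argument is essentially a bookkeeping verification, and I do not expect a genuine obstacle once the numerical data are assembled. The one point demanding care is the passage from the Euler characteristic $\chi_d(\cL _{21})(0)$ to the single dimension $\dim \Ext^1_{\odi{X}}(\cL_2,\cL_1)$: this rests on combining the vanishing $\ext^0_{\odi{X}}(\cL_2,\cL_1)=0$ of Lemma~\ref{HomL1L2} with the nonnegativity of $\ext^2_{\odi{X}}(\cL_2,\cL_1)$, which turns the exact value of $\chi$ (available for $t=2$ by Theorem~\ref{XdL21}) into the one-sided bound $\dim \Ext^1_{\odi{X}}(\cL_2,\cL_1)\ge -\chi_d(\cL _{21})(0)$ that the four cases require.
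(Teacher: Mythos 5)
Your proposal is correct and follows essentially the same route as the paper: cases (1)--(2) via Theorem \ref{mainthm11} combined with Remark \ref{ext1(L2,L1)t=2,3}(i) (your explicit bound $\ext^1\ge -\chi_d(\cL_{21})(0)=c(n-c-1)-1$ is exactly the computation implicit in the table and the discussion following that remark), and cases (3)--(4) via Proposition \ref{$c=1$}. The numerical verifications all check out.
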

\begin{proof} (1) and (2) follow from Theorem \ref{mainthm11} and Remark \ref{ext1(L2,L1)t=2,3} (i), cf. the table and text after Remark \ref{ext1(L2,L1)t=2,3} for $t=2$.

(3) and (4) follow from  Theorem \ref{mainthm11} and Proposition \ref{$c=1$}.
\end{proof}

\begin{remark}\rm The case $(n-c,t,c)=(2,2,2)$ corresponds to a cubic scroll
  $X$ in $\PP^4$ which has finite representation type. Also in the case
  $(n-c,t,c)=(2,2,3)$  Theorem
  \ref{mainthm11} does not apply since $\dim \Ext^{1}_{\odi{X}}(\cL_2,\cL_1) =
  2$; it corresponds to a quartic  scroll  in $\PP^5$ which has tame representation type (see \cite{FM}). Note that the case $n=3$ of (4) is also proved in \cite{Casn}.
\end{remark}

\begin{corollary}\label{maincoro3} Let $X\subset \PP^n$ be a general linear standard determinantal scheme of codimension $c\ge 1$ defined by the maximal minors of a $t\times (t+c-1)$ matrix with linear entries. Assume that $n-c\ge 2$, $t=3$ and $$-{c\choose 2}{n+2\choose 2}+(c+2){c+1\choose 2}(n+1)-{c+2\choose 2}^2>2.$$
Then $X$ is of Ulrich wild representation type. In particular, $X\subset \PP^n$ is of Ulrich wild representation type if $t=3$ and $n\le 12$.
\end{corollary}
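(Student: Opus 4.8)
The plan is to deduce everything from Theorem~\ref{mainthm11}, whose hypothesis is $\dim \Ext^1_{\odi{X}}(\cL_2,\cL_1)>2$; so the whole task is to convert the numerical assumption of the Corollary into this inequality, and the bridge is the Euler characteristic $\chi_d(\cL _{21})(0)$. Since $n-c\ge 2$ and $X$ is general, Remark~\ref{pamsrem}(i) gives $\depth _{I(Z)}A\ge 3\ge 2$, so Lemma~\ref{HomL1L2}(2) yields $\ext^0_{\odi{X}}(\cL_2,\cL_1)=0$. Combining this with the trivial bound $\ext^2_{\odi{X}}(\cL_2,\cL_1)\ge 0$ and the definition of $\chi_d(\cL _{21})(0)$, I obtain
$$\ext^1_{\odi{X}}(\cL_2,\cL_1)=\ext^2_{\odi{X}}(\cL_2,\cL_1)-\chi_d(\cL _{21})(0)\ge -\chi_d(\cL _{21})(0).$$
Hence it suffices to show $-\chi_d(\cL _{21})(0)>2$, and the displayed assumption of the Corollary is precisely this once $\chi_d(\cL _{21})(0)$ has been identified with the stated binomial expression.

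The identification is where the earlier results enter. For $c\ge 2$ I would invoke Theorem~\ref{XdL21}: because $t=3$, inequality (1) there is an equality, and Remark~\ref{ext1(L2,L1)t=2,3}(ii) evaluates it to $\chi_d(\cL _{21})(0)={c\choose 2}{n+2\choose 2}-(c+2){c+1\choose 2}(n+1)+{c+2\choose 2}^2$. The Corollary's hypothesis thus reads $-\chi_d(\cL _{21})(0)>2$, giving $\ext^1_{\odi{X}}(\cL_2,\cL_1)>2$, so Theorem~\ref{mainthm11} applies. The case $c=1$ is not covered by Remark~\ref{ext1(L2,L1)t=2,3} (which assumes $c\ge 2$), so there I would instead quote Proposition~\ref{$c=1$}, which gives $\dim \Ext^1_{\odi{X}}(\cL_2,\cL_1)={3\choose 2}(n+1)-9=3(n-2)$; since ${1\choose 2}=0$ the hypothesis specializes to $3(n-2)>2$, the same inequality, and Theorem~\ref{mainthm11} again yields Ulrich wildness.

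It remains to prove the ``in particular'' clause, which is a finite verification. For $t=3$ and $n\le 12$ the admissible codimensions are $1\le c\le n-2$. For $c=1$ one has $-\chi_d(\cL _{21})(0)=3(n-2)>2$ as soon as $n\ge 3$, and $n\ge 3$ is forced by $n-c\ge 2$. For $c\ge 2$ the function $d\mapsto\chi_d(\cL _{21})(0)$ is an upward-opening quadratic in $d=n-c$ (leading coefficient $\tfrac12 h(3,c)=\tfrac12{c\choose 2}>0$), so the locus $\{\chi_d(\cL _{21})(0)<-2\}$ is an interval in $d$; it therefore suffices to check that both endpoints $d=2$ and $d=12-c$ lie in it. At $d=2$ one computes $\chi_2(\cL _{21})(0)=f(3,c)=1-3c-c^2<-2$ for all $c\ge 1$, and at $d=12-c$ (i.e. $n=12$) a direct evaluation of the binomial formula gives values strictly below $-2$ throughout $2\le c\le 10$ (the least negative being $-9$). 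Hence $2\le d\le 12-c$ lies inside the interval for every admissible $c$, so the hypothesis holds and $X$ is Ulrich wild. (Consistently, the bound first fails only at $(c,d)=(2,16)$, where $\chi_d(\cL _{21})(0)=-2$ and $n=18>12$.) The only real work here is this bookkeeping over the finitely many pairs $(c,n)$; I expect it, rather than any conceptual point, to be the main but entirely routine obstacle.
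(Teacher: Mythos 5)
Your proposal is correct and follows essentially the same route as the paper: both deduce the statement from Theorem \ref{mainthm11} via the identity $\ext^1(\cL_2,\cL_1)\ge-\chi_d(\cL_{21})(0)$ (using Lemma \ref{HomL1L2} and the equality case of Theorem \ref{XdL21} for $t=3$, together with Proposition \ref{$c=1$} when $c=1$), the only difference being that the paper disposes of the ``in particular'' clause by pointing to the table after Remark \ref{ext1(L2,L1)t=2,3}, whereas you verify it directly by exploiting convexity of $\chi_d(\cL_{21})(0)$ in $d$ and checking the endpoints $d=2$ and $d=12-c$ --- a clean and correct way to organize the same finite computation.
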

\begin{proof} This follows from Theorem \ref{mainthm11}, Theorem  \ref{XdL21}, Proposition \ref{$c=1$} and Remark \ref{ext1(L2,L1)t=2,3} (ii). Moreover, any $(c,n-c)$ with $c\ge 2$, $n-c\ge 2$, $n\le 12$ and $t=3$ is quoted in the table after Remark \ref{ext1(L2,L1)t=2,3} and satisfies $\dim\Ext^{1}_{\odi{X}}(\cL _2,\cL _1)> 2$, and we conclude by Theorem  \ref{mainthm11}.
\end{proof}

\begin{corollary}\label{maincoro4} Let $X\subset \PP^n$ be a general linear standard determinantal scheme of codimension $c$ defined by the maximal minors of a $t\times (t+c-1)$ matrix with linear entries.
Then $X$ is of Ulrich wild representation type provided $(t,c,n-c)$ belong to the table after Remark \ref{ext1(L2,L1)t=2,3} and $(t,c,n-c)\notin \{ (2,2,2),(2,3,2),(3,2,16) \}$.
\end{corollary}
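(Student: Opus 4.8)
The plan is to reduce the whole statement to the criterion already established in Theorem \ref{mainthm11}, which guarantees that a general linear standard determinantal scheme $X$ with $\dim X\ge 1$ is of Ulrich wild representation type as soon as $\dim \Ext^1_{\odi{X}}(\cL_2,\cL_1)>2$. Every triple $(t,c,n-c)$ listed in the table after Remark \ref{ext1(L2,L1)t=2,3} has $d=n-c\ge 2$, so the hypothesis $\dim X\ge 1$ holds automatically, and the entire problem collapses to verifying the single numerical inequality $\dim \Ext^1_{\odi{X}}(\cL_2,\cL_1)>2$ for each admissible triple other than the three excluded ones.

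To obtain a lower bound for $\dim \Ext^1_{\odi{X}}(\cL_2,\cL_1)$ I would first invoke Lemma \ref{HomL1L2}, which gives $\ext^0_{\odi{X}}(\cL_2,\cL_1)=0$. Hence the Euler characteristic collapses to $\chi_d(\cL _{21})(0)=-\ext^1_{\odi{X}}(\cL_2,\cL_1)+\ext^2_{\odi{X}}(\cL_2,\cL_1)$, so that $\ext^1_{\odi{X}}(\cL_2,\cL_1)=\ext^2_{\odi{X}}(\cL_2,\cL_1)-\chi_d(\cL _{21})(0)\ge -\chi_d(\cL _{21})(0)$, using $\ext^2\ge 0$. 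It therefore suffices to show $\chi_d(\cL _{21})(0)\le -3$. This is exactly where Theorem \ref{XdL21}(1) enters: it bounds $\chi_d(\cL _{21})(0)$ from above by $\binom{d-1}{2}h(t,c)+(d-2)g(t,c)+f(t,c)$, an equality when $d=2$ or $2\le t\le 3$. Since $\chi_d(\cL _{21})(0)$ is an integer, an upper bound strictly below $-2$ forces $\chi_d(\cL _{21})(0)\le -3$, whence $\ext^1_{\odi{X}}(\cL_2,\cL_1)\ge 3>2$.

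The remaining ingredient is the finite verification that the polynomial expression above is $\le -3$ for every entry of the table, with the sole exceptions $(2,2,2)$, $(2,3,2)$ and $(3,2,16)$. This is precisely the computation recorded in the text following Remark \ref{ext1(L2,L1)t=2,3}, where the explicit formulas \eqref{fgh} for $f,g,h$ are substituted and $\chi_d(\cL _{21})(0)$ is evaluated; the three excluded triples are exactly those where the value only reaches $-1$ or $-2$, so the strict inequality $>2$ fails. Feeding this back into the displayed chain yields $\dim\Ext^1_{\odi{X}}(\cL_2,\cL_1)>2$ for every non-exceptional triple, and Theorem \ref{mainthm11} then delivers Ulrich wild representation type.

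I expect no genuine conceptual obstacle here: the only laborious part is the case-by-case evaluation of the binomial sums in \eqref{fgh}, a routine symbolic computation already performed with Mathematica in the preceding discussion. The one point requiring care is that Theorem \ref{XdL21}(1) supplies merely an upper bound for $\chi_d(\cL _{21})(0)$ when $t\ge 4$ and $d\ge 3$; but because we only need $\chi_d(\cL _{21})(0)\le -3$ and $\ext^2\ge 0$, an upper bound is exactly what is required, so no exact evaluation is needed outside the equality range $d=2$ or $2\le t\le 3$.
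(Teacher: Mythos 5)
Your proposal is correct and follows essentially the same route as the paper: the paper's proof simply cites Theorem \ref{mainthm11} together with the table and the accompanying text after Remark \ref{ext1(L2,L1)t=2,3}, and the mechanism you spell out (vanishing of $\ext^0$ from Lemma \ref{HomL1L2}, nonnegativity of $\ext^2$, and the upper bound on $\chi_d(\cL_{21})(0)$ from Theorem \ref{XdL21}(1), so that a bound $\le -3$ forces $\dim\Ext^1_{\odi{X}}(\cL_2,\cL_1)\ge 3$) is exactly what underlies that table computation. Your added remark that only an upper bound on $\chi_d(\cL_{21})(0)$ is needed outside the equality range $d=2$ or $2\le t\le 3$ is a correct and worthwhile clarification, but not a different argument.
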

\begin{proof} This follows from Theorem   \ref{mainthm11}, and the table and the text after Remark \ref{ext1(L2,L1)t=2,3}.
\end{proof}

\begin{theorem}\label{mainthm2} All general non-degenerated linear determinantal curves $X\subset \PP^n$, $n\ge 3$,  are of Ulrich wild representation type except for  the rational normal curves which are of finite representation type.
\end{theorem}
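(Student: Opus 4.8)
The plan is to split the general non-degenerate linear determinantal curves $X\subset \PP^n$ according to the size $t$ of the defining matrix, using that a general such curve is smooth, so that $\cL_1$ and $\cL_2$ are genuine line bundles and Construction \ref{construction} applies. Since $I(X)$ is generated in degree $t$, the curve is automatically non-degenerate once $t\ge 2$, and the case $t=1$ (a line) is degenerate for $n\ge 2$; hence among non-degenerate curves only $t\ge 2$ occurs. When $t=2$ one has $c=n-1$ and $\deg X={t+c-1\choose c}={n\choose n-1}=n$, so $X$ is the rational normal curve of degree $n$, which is of finite representation type by the classification recalled in the introduction (\cite{BGS}; Theorem C and \cite{EH}; pg. 348). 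These are exactly the exceptional rational normal curves in the statement. It therefore remains to prove that a general non-degenerate linear determinantal curve with $t\ge 3$ is of Ulrich wild representation type; such a curve has arithmetic genus $p_a(X)=\sum_{i=1}^{t-1}(i-1){n+i-2\choose i}\ge 3>0$, so it is never rational normal and the dichotomy is consistent.

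First I would record the two numerical inputs supplied by the earlier results. By Proposition \ref{casecurves}, for $t\ge 3$ we have $s_{21}:=\dim\Ext^1_{\odi{X}}(\cL_2,\cL_1)\ge 2$, so Construction \ref{construction} yields a positive-dimensional irreducible family $\shF_2\cong \PP(\Ext^1_{\odi{X}}(\cL_2,\cL_1))$ of rank $2$ simple (hence indecomposable) initialized Ulrich bundles, of dimension $s_{21}-1\ge 1$. Next, by Proposition \ref{ext1E1E2Curves}, for any two non-isomorphic members $\shE_1,\shE_2\in \shF_2$ one has $\ext^1(\shE_1,\shE_2)\ge \ext^1(\cL_2,\cL_1)+p_a(X)-2\ge 3>0$.

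With these facts in hand I would run the iterated extension argument exactly as in the proof of Theorem \ref{Bigthm} (equivalently Theorem \ref{mainthm11}). Given $q>0$, choose $p$ with $2p>q$, pick a point $[\shE_1,\cdots,\shE_p]$ in the open dense locus of $\shF_2\times\cdots\times\shF_2$ where the $\shE_i$ are pairwise non-isomorphic, and use \cite{PT}; Proposition 5.1.3 to form rank $2p$ simple Ulrich sheaves as extensions $0\arr \oplus_{i=1}^{p-1}\shE_i\arr\shE\arr\shE_p\arr 0$. Since each $\shE_i$ is Ulrich, so is $\shE$; and since $\ext^1(\shE_p,\shE_i)\ge 3-2>0$ together with $\dim\shF_2\ge 1$, the resulting family $\shH_{2p}$ of non-isomorphic rank $2p$ simple Ulrich bundles has dimension growing linearly in $p$ (of order $\tfrac{2p}{2}(s_{21}-1)$, as in Theorem \ref{Bigthm}). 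As $q$ was arbitrary, this exhibits $\ell$-dimensional families of indecomposable Ulrich sheaves for arbitrarily large $\ell$, i.e. $X$ is of Ulrich wild representation type.

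The delicate point is the borderline case $t=3$ (in particular $n=3$, where $c=2$), in which Proposition \ref{casecurves} only guarantees $\dim\Ext^1_{\odi{X}}(\cL_2,\cL_1)\ge 2$ and this can be an equality, so that Theorem \ref{mainthm11}, which requires $\dim\Ext^1_{\odi{X}}(\cL_2,\cL_1)>2$, does not directly apply. This is precisely where the curve hypothesis is essential: the extra summand $p_a(X)\ge 3$ in Proposition \ref{ext1E1E2Curves} compensates for the possible failure of $s_{21}>2$ and still yields $\ext^1(\shE_1,\shE_2)\ge 3>0$, which is all the iterated extension construction needs. Once this is secured, the remaining dimension bookkeeping is routine and identical to that of Theorem \ref{Bigthm}, and we are done.
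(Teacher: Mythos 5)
Your proof is correct and follows essentially the same route as the paper: reduce to $t\ge 3$ via Proposition \ref{casecurves}, identify the $t=2$ case with the rational normal curve, and run the iterated-extension machinery of Theorems \ref{Bigthm} and \ref{mainthm11} using Proposition \ref{ext1E1E2Curves}. You are in fact more careful than the paper's two-line proof, which simply invokes Theorem \ref{mainthm11} even though Proposition \ref{casecurves} only guarantees $\dim\Ext^1_{\odi{X}}(\cL_2,\cL_1)\ge 2$ when $t=3$; your observation that the genus term $p_a(X)\ge 3$ in Proposition \ref{ext1E1E2Curves} is what actually makes the construction go through in that borderline case is exactly the right way to close that small gap.
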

\begin{proof}
For general linear determinantal curves $X \subset  \PP^n$ other that $\PP^1$ or a rational normal curve, we have $\dim \Ext^1(\cL_2,\cL_1)\ge 2$
(see Proposition \ref{casecurves}). Hence $X$ is of Ulrich wild representation type by Theorem \ref{mainthm11}.
\end{proof}


\section{Final comments and a Conjecture.}
 We end the paper with a Conjecture raised by this paper. In fact
Theorems \ref{mainthm11} and \ref{mainthm2} and Corollaries \ref{maincoro}, \ref{maincoro3} and \ref{maincoro4} together with  examples computed with Macaulay2 (\cite{Mac2})
suggest - and prove in
many cases - the following conjecture:

\begin{conj}\label{conjecture} A general  linear standard determinantal scheme $X\subset \PP^n$ of codimension $c\ge 2$ defined by the maximal minors of a $t\times (t+c-1)$ matrix is of
Ulrich wild representation type unless $X$ is $\PP^{n-c}$, the rational normal curve  in $\PP^n$ or the cubic scroll  in $\PP^4$ which are of finite representation type; or the quartic scroll in $\PP^5$ which is of tame representation type.
\end{conj}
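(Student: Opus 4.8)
The plan is to reduce the conjecture to a single numerical inequality and then feed it into the machinery already assembled in the paper. By Theorem \ref{mainthm11} it suffices, for a general such $X$ with $\dim X\ge 1$, to produce the one estimate $\dim\Ext^1_{\odi{X}}(\cL_2,\cL_1)>2$; the iterated--extension construction then yields the Ulrich wild (indeed strictly Ulrich wild) families automatically. Since Lemma \ref{HomL1L2} gives $\ext^0_{\odi{X}}(\cL_2,\cL_1)=0$, the definition $\chi_d(\cL_{21})(0)=\ext^0-\ext^1+\ext^2$ rearranges to $\ext^1_{\odi{X}}(\cL_2,\cL_1)=\ext^2_{\odi{X}}(\cL_2,\cL_1)-\chi_d(\cL_{21})(0)\ge -\chi_d(\cL_{21})(0)$. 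Hence, away from the exceptional list, the whole conjecture follows from the purely combinatorial inequality $\chi_d(\cL_{21})(0)<-2$, which by Theorem \ref{XdL21} is in turn implied by $\binom{d-1}{2}h(t,c)+(d-2)g(t,c)+f(t,c)<-2$ (an upper bound for $\chi_d(\cL_{21})(0)$, and an equality when $d=2$ or $2\le t\le 3$).

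Next I would clear the accessible cases exactly as in the body. For curves ($d=n-c=1$) Theorem \ref{mainthm2} settles everything: every general non-degenerate linear determinantal curve other than the rational normal curve ($t=2$) is Ulrich wild. For $t=2$ and $d\ge2$ Remark \ref{ext1(L2,L1)t=2,3}(i) gives the exact value $\chi_d(\cL_{21})(0)=c+1-cd$, and $c+1-cd<-2\iff c(d-1)>3$; this fails only for $(c,d)=(2,2)$, where the value is $-1$, and $(c,d)=(3,2)$, where it is $-2$, which are precisely the cubic scroll in $\PP^4$ and the quartic scroll in $\PP^5$. For $t=3$ one uses the exact formula of Remark \ref{ext1(L2,L1)t=2,3}(ii) together with Corollary \ref{maincoro3}, and the remaining accessible triples $(t,c,d)$ are the ones tabulated after Remark \ref{ext1(L2,L1)t=2,3} and handled by Corollary \ref{maincoro4}. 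To complete the ``unless'' direction I would record that the listed schemes are genuinely not wild: $\PP^{n-c}$ (the case $t=1$), the rational normal curve, and the cubic scroll are of finite representation type by the classification (\cite{BGS}, \cite{EH}), while the quartic scroll is tame (\cite{FM}); for each of these the estimate above correctly breaks down, returning $\dim\Ext^1_{\odi{X}}(\cL_2,\cL_1)\le2$.

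The crux, and the reason the statement remains a conjecture, is the regime $t\ge3$ with $d$ large. The leading coefficient $h(t,c)=\tfrac{(c^2-c)(c+t-1)!}{2(c+2)!(t-3)!}$ is strictly positive for every $c\ge2$, $t\ge3$ (it equals $\binom{c}{2}$ when $t=3$), so the right-hand side of the bound grows like $\binom{d-1}{2}h(t,c)$ and becomes positive once $d$ is large; then the lower bound $\ext^1\ge-\chi_d(\cL_{21})(0)$ collapses into something vacuous. For $t\ge4$ the situation is even weaker, because the inductive step in Theorem \ref{XdL21} only produces an inequality — the cokernel term $T_3$ need not vanish — so the sign of $\chi_d(\cL_{21})(0)$ itself is no longer pinned down. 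Consequently the present method proves wildness only inside the bounded region of $(t,c,d)$-space exhibited in the table, and the outstanding problem is to estimate $\ext^1_{\odi{X}}(\cL_2,\cL_1)$ \emph{itself} rather than its alternating sum in the large-$(t,d)$ range: for instance by showing that the cancelling term $\ext^2_{\odi{X}}(\cL_2,\cL_1)$ grows strictly slower than $\ext^1$, or by replacing the pair $(\cL_1,\cL_2)$ by a different seed of rank-one Ulrich sheaves whose mutual extension group is provably large. Establishing either would upgrade the conjecture to a theorem in full generality; I expect this to be the genuine obstacle.
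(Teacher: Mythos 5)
The statement you were given is a \emph{conjecture}: the paper itself does not prove it in full, and neither do you, but your proposal is an accurate reconstruction of exactly the partial evidence the paper assembles --- reduce everything via Theorem \ref{mainthm11} to the single estimate $\dim\Ext^1_{\odi{X}}(\cL_2,\cL_1)>2$, bound $\ext^1\ge-\chi_d(\cL_{21})(0)$ using Lemma \ref{HomL1L2} and Theorem \ref{XdL21}, settle curves by Theorem \ref{mainthm2} and the cases $t=2,3$ and the tabulated triples by Remark \ref{ext1(L2,L1)t=2,3} and Corollaries \ref{maincoro}--\ref{maincoro4}, and recognize the exceptional scrolls as exactly where $\chi_d(\cL_{21})(0)\in\{-1,-2\}$. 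Your diagnosis of the obstruction (positivity of $h(t,c)$ for $t\ge3$, $c\ge2$ making the Euler-characteristic lower bound vacuous for large $d$, plus the non-vanishing of $T_3$ for $t\ge4$) matches the paper's own Remark 6.2, which even records triples where $\ext^1(\cL_2,\cL_1)=0$ so that a genuinely different seed or estimate is needed; the only quibble is that for the excluded scrolls your method shows the \emph{lower bound} drops to $\le 2$, not by itself that $\ext^1$ does.
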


\begin{remark} \rm
(i) Using Macaulay2 we can enlarge the list of integers $(t,c,d=n-c)$ such that $\dim \Ext^1(\cL_2,\cL_1 )>2$ and conclude (using our approach of building  families of Ulrich bundles of arbitrary high rank and dimension) that, in addition to the cases covered by Theorems \ref{mainthm11} and \ref{mainthm2} and Corollaries \ref{maincoro}, \ref{maincoro3} and \ref{maincoro4}, there are many other cases which support Conjecture 6.1. Indeed, for $(t=3,c=2,18\le n\le 40)$ we have got $\dim \Ext^1(\cL_2,\cL_1 )>2$.

(ii) One of the first open cases corresponds to $(t,c,d)=(5,3,5)$, i.e. a linear standard determinantal variety $X$ of dimension 5  in $\PP^8$ defined by the maximal minors of a $5\times 7$ matrix with linear entries.

(iii)  It is worthwhile to point out that we do not claim that for any $(t,c,d=n-c)$ we have $\dim \Ext^1( \cL_2,\cL_1)>2$. In fact, using Macaulay2, we have checked that for $(t,c,n)=(3,3,14\le n \le 21) \text{ or } (3,4, 17\le n \le 21) \text{ or } (4,2,19\le n \le 21)$ we have $\dim \Ext^1(\cL_2,\cL_1)=0$. In these cases, to prove Conjecture 6.1 we cannot proceed as in Theorem \ref{mainthm11} and another approach is required.

\end{remark}

As another interesting open problem we propose the construction of Ulrich sheaves on standard determinantal varieties $X\subset \PP^n$ of codimension $c$ defined by the maximal minors of a $t\times (t+c-1)$ matrix with entries homogeneous forms of degree $d_{ij}\ge 0$. To our knowledge   there are no known examples of Ulrich sheaves on $X$ unless $X$ is either a complete intersection or a linear standard determinantal scheme or a codimension 2 standard determinantal scheme.


\end{document}